\newif{\ifdraft}\drafttrue
\numberwithin{equation}{section}
\newtheorem{theorem}{Theorem}[section]
\newtheorem{lemma}[theorem]{Lemma}
\newtheorem{proposition}[theorem]{Proposition}
\newtheorem{corollary}[theorem]{Corollary}
\newtheorem{dfn}[theorem]{Definition}
\theoremstyle{break}
\theoremstyle{normal}
\newtheorem{example}[theorem]{Example}
\newtheorem{remark}[theorem]{Remark}
\newenvironment{proof}[1][Proof.]{
\begin{trivlist}
\item[\hskip \labelsep {\bfseries #1}]}{\hspace*{\fill}$\Box$\end{trivlist}
}
\newcommand{\ov}[1]{\overline{#1}}
\newcommand{\abs}[1]{\left|\mathinner{#1}\right|}
\newcommand{\set}[2]{\left\{\, \mathinner{#1}\vphantom{#2}\: \left|\: \vphantom{#1}\mathinner{#2} \right.\,\right\}}
\newcommand{\oneset}[1]{\left\{\, \mathinner{#1} \,\right\}}
\newcommand{\os}[1]{\left\{\, \mathinner{#1} \,\right\}}
\newcommand{\smallset}[1]{\left\{\mathinner{#1}\right\}}
\newcommand\lds{,\ldots ,} 
\newcommand{\sse}{\subseteq}
\newcommand{\es}{\emptyset}
\newcommand{\N}{\mathbb{N}}
\newcommand{\Z}{\mathbb{Z}}
\newcommand{\Q}{\mathbb{Q}}
\newcommand{\cG}{\mathcal{G}}
\newcommand{\cK}{\mathcal{K}}
\newcommand{\cP}{\mathcal{P}}
\newcommand{\IFF}{if and only if\xspace}
\newcommand{\homo}{homomorphism\xspace}
\renewcommand{\phi}{\varphi}
\newcommand{\ew}{\varepsilon}
\newcommand{\eps}{\varepsilon}
\newcommand{\Sg}{\Sigma}
\newcommand{\GG}{\Gamma}
\newcommand{\alp}{\alpha}
\newcommand{\bet}{\beta}
\newcommand{\gam}{\gamma}
\newcommand{\del}{\delta}
\newcommand{\sig}{\sigma}
\newcommand{\bs}{\backslash}
\newcommand\Copt{\cC_{\mathrm{opt}}}
\newcommand\RAS[2]{\overset{#1}{\underset{#2}{\Longrightarrow}}}
\newcommand{\genr}[2]{\left< \, \mathinner{#1}\vphantom{#2}\: \left|\: \vphantom{#1}\mathinner{#2} \right.\, \right>}
\newcommand{\Sym}[1]{\mathrm{Sym}({#1})}
\newcommand{\Aut}{\mathrm{Aut}}
\newcommand{\Comp}[1]{\overline{#1}}
\newcommand{\cC}{\mathcal{C}}
\newcommand{\ssnq}{\subsetneqq}
\newcommand{\sm}{\setminus}
\newenvironment{aw}{\noindent\color{red} AW }{}
\newenvironment{vd}{\noindent\color{blue} VD }{}
\begin{document}

\title{Context-Free Groups and Their Structure Trees}

\author{Volker Diekert \qquad Armin Wei\ss \\[5mm]
  Universit{\"a}t Stuttgart, FMI \\
  Universit{\"a}tsstra{\ss}e 38 \\
  D-70569 Stuttgart, Germany \\[5mm]
  \texttt{$\{$diekert$,$weiss$\}$@fmi.uni-stuttgart.de}}

\maketitle

\begin{abstract}
  \noindent
  Let $\GG$ be a connected, locally finite graph of finite tree width and
  $G$ be a group acting on it with finitely many orbits and finite node stabilizers. 
  We provide an elementary and direct construction  of a tree $T$ on which 
  $G$ acts with finitely many orbits and finite vertex stabilizers. 
  Moreover, the tree is defined directly in terms of 
  the structure tree of optimally nested cuts of $\GG$. 
  Once the tree is constructed, Bass-Serre theory yields that $G$ is virtually free. This approach simplifies the existing proofs for the fundamental result of Muller and Schupp that characterizes context-free groups 
 as f.g.~virtually free groups. Our construction avoids the explicit use of Stallings' structure theorem and it is self-contained. 

We also give a simplified proof for an important consequence of the structure tree theory by Dicks and Dunwoody which has been stated by Thomassen and Woess. It says that a f.g.{} group is accessible if and only if its Cayley
 graph  is accessible.
   \medskip

  \noindent
  \textbf{Keywords.}\, 
  Combinatorial group theory, context-free group, structure tree, finite treewidth, accessible graph.
  
   \noindent
\textbf{AMS classification:} 
05C25,    	%(Graphs and groups)  
20E08, 		%(groups acting on trees)
20F10, 		%(Word problems, other decision problems, connections with logic and automata)
20F65. 		%(Geometric group theory)
\end{abstract}
\section{Introduction}\label{sec:intro}
A seminal paper of Muller and Schupp \cite{ms83} showed that 
a group $G$ is context-free \IFF it is a finitely generated virtually free 
group. A group $G$ is \emph{context-free} if there is some finite set
$\Sg$ and a surjective \homo $\phi: \Sg^*\to G$ such that the associated 
group language $L_G = \phi^{-1}(1)$ is context-free in the sense of formal language theory. A group $G$ is \emph{virtually free} if it has a free subgroup of finite index. 
Finitely generated (f.g.) virtually free 
groups were the basic examples for context-free groups because  
the standard algorithm to solve their word problem runs on a deterministic 
pushdown automaton; and these automata recognize a proper subfamily of context-free languages. 
The  deep insight by Muller and Schupp is that %amazingly 
the converse holds: If
$G$ is context-free, then $G$  is a finitely generated virtually free 
group. Over the past decades a wide range of other characterizations of context-free 
(or f.g.~virtually free) groups have been found
showing the importance of this class. 

The various equivalent characterizations include: 
(1) fundamental groups of finite graphs of finite groups \cite{Karrass73},  
(2) f.g.~groups having a Cayley graph which can be $k$-triangulated \cite{ms83}
(3) f.g.~groups having a Cayley graph of finite treewidth \cite{KuskeL05},
(4) universal groups of finite pregroups \cite{Rimlinger87a}, 
(5) groups having a finite presentation by some geodesic string rewriting system \cite{GilHHR07}, and
(6) f.g.~groups having a Cayley graph with decidable monadic second-order theory \cite{KuskeL05}. 
For some other related results see the recent surveys \cite{Antolin11}
or \cite{CoornaertFS12}.

The result of Muller and Schupp was  stated in \cite{ms83} as a conjecture
and proved only  under the assumption that  finitely presented groups are accessible. 
The accessibility of finitely presented groups
was proved later by Dunwoody \cite{Dunwoody85}. (There are examples of 
finitely generated groups which not accessible by \cite{Dunwoody91}.) Accessibility means that the process of splitting the group with {S}tallings' structure theorem \cite{Stallings71} \footnote{ The structure theorem was first 
proved for finitely presented torsion-free groups by Stallings \cite{Stallings68} and for finitely generated torsion-free groups by Bergman \cite{Bergman68}.} eventually terminates.
In subsequent proofs the result in \cite{Dunwoody85} could be replaced by showing explicit upper bounds on how often splittings according to {S}tallings' structure theorem can be performed, see e.g. \cite{sen96dimacs}.

However, the reference to  \cite{Stallings71} remained.
Indeed, almost all proofs in the literature showing that a context-free group is virtually free use  the  structure theorem by {S}tallings. Recently, in \cite{Antolin11} another proof was given by Antolin which instead of {S}tallings' structure theorem and a separate result for accessibility uses a more general result due to Dunwoody~\cite{Dunwoody79}. 

The starting point for our contribution has been as follows: Circumvent the deep theorems of 
Dunwoody and Stallings by starting with a f.g.~group $G$ having a Cayley graph of finite treewidth.
Construct from these data a tree on which $G$ acts with finite node stabilizers and with finitely 
many orbits. Apply  Bass-Serre theory \cite{serre80} to see that $G$ can be realized as a 
fundamental group of a finite graph of groups with finite
vertex groups. It is known by \cite{Karrass73} that these groups are f.g.~and virtually free.

To follow this roadmap  became possible due to a recent paper by Kr{\"o}n \cite{kroen10} which presents a simplified version of Dunwoody's cut construction \cite{Dunwoody82}.
We realized that Kr{\"o}n's proof of {S}tallings' structure theorem can be modified such that it yields the tree we were looking for. We could not use Kr{\"o}n's result as a black box because in his paper he deals with cuts of globally minimal weight, only. Thus all cuts have the same weight whereas we need to consider cuts of different weight in order to get a non-refinable decomposition as fundamental group of a graph of groups.

Our approach leads to the following result: Let $\Gamma$ be a connected, locally finite graph of finite treewidth, and let $G$ be a group acting on $\Gamma$ such that $G\bs \Gamma$ is finite and each 
node stabilizer $G_v$ is finite. Then $G$ is finitely generated and virtually free. 

This is the essence of \prettyref{cor:nixstall}. To the best of our knowledge this result 
has not been formulated elsewhere. On the other hand, it is also clear that \prettyref{cor:nixstall} can be derived rather easily {}from 
existing results in the literature. So, the main contribution of the present paper is the new construction of optimally nested cuts (optimal cuts for short) and a direct self-contained
combinatorial proof of \prettyref{thm:new_alpha}, which implies \prettyref{cor:nixstall} by Bass-Serre theory.  

In \prettyref{thm:new2} we also give a new elegant self-contained proof for another important result in this area 
by Thomassen and Woess which is a consequence of \cite[Thm. {II} 2.20]{DicksD89}: Let $\Gamma$ be a locally finite, connected, accessible graph, and let a f.g.{} group $G$ act on $\Gamma$ such that $G\bs \Gamma$ is finite and each 
node stabilizer $G_v$ is finite. 
Then the group $G$ is accessible.

The outline of the paper is as follows: 
\prettyref{sec:prem} fixes some notation. 

In \prettyref{sec:cuts} we follow \cite{kroen10} introducing the necessary modifications. The focus in this section is on \emph{accessible graphs}
c.f.~\prettyref{dfn:accessible}. We work with bi-infinite simple paths rather  than with \emph{ends}.
This avoids some technical definitions and is more intuitive when drawing pictures as in 
\prettyref{fig:corners2}  or \prettyref{fig:corners3}. 
The key point in \prettyref{sec:cuts} is \prettyref{prop:opt_nested}, which is valid
for optimally nested cuts of different weights. It generalizes the corresponding results in \cite{Dunwoody82} and \cite{kroen10} on globally minimal cuts.
This leads to \prettyref{prop:tree} saying that the set of optimally nested cuts forms a tree set in the sense 
 of \cite{Dunwoody79}. This means that they can be viewed as the edge set of the so-called \emph{structure tree}.
 
In \prettyref{sec:blocks} we want to obtain some more information about the vertex stabilizers of the the action on the structure tree. In order to do so we define \emph{blocks} as in \cite{ThomassenW93}. The central result is \prettyref{prop:enden_in_Bc}. It says that blocks have at most one end, which finally leads to \prettyref{thm:new_alpha} and \prettyref{cor:new3}. 

\prettyref{sec:ftw} recalls the notion of \emph{finite treewidth}. The results 
of \prettyref{sec:cuts} and \prettyref{sec:blocks} yield the desired proof 
of \prettyref{thm:new_alpha}.

\prettyref{sec:ms} shows how to derive the result of Muller and Schupp \cite{ms83}
using our approach. This section does not contain any new material, but we tried to have a concise presentation. In particular, we omit the technical notion of a $k$-triangulation of a graph by showing 
directly that the Cayley graph of a context-free group has finite treewidth. 
This can be done with the very same ideas which are present in \cite{ms83}.
Then, we can apply \prettyref{cor:nixstall} to show that a context-free group is virtually free.

%%%%%%%%%%%%%%%%%%%%%%%%%%%%%%%%%%%%%%%%%%%%%%
\section{Preliminaries}\label{sec:prem}

\subsection{Preliminaries on graphs}\label{sec:pgraph}
A \emph{directed graph} $\Gamma$ is given by the following data: 
A  set of vertices $V= V(\Gamma)$, a set of edges $E=E(\GG)$ together with two
mappings $s:E \to V$ and $t:E \to V$. The vertex $s(e)$ is the \emph{source} of $e$ and $t(e)$ is the \emph{target} of $e$.  A vertex $u$ and an edge $e$ are \emph{incident}, 
if $u \in \smallset{s(e), t(e)}$. The \emph{degree} of $u$ is the number of incident edges, and  $\Gamma$ is called \emph{locally finite} if the degree of all vertices is finite. 

An \emph{undirected graph} $\Gamma$ is a directed graph such that 
the set of edges $E$ is equipped with a fixed point free involution $e \mapsto \ov e$.
(i.e.,a map such that $e = \ov{\ov e} $ and $e \neq \ov e $ for all $ e \in E$).
Furthermore we  demand $s(e) = t(\ov e)$. 
An \emph{undirected edge} is the set
$\oneset{e, \ov e}$. By abuse of language we denote an undirected edge 
simply by $e$, too. 

If we speak about a \emph{graph}, then we always mean an undirected graph, otherwise 
we say specifically  directed graph. 
Most of the time we only consider (undirected) graphs without loops and multi-edges. In this case we identify $E$ with two-element sets of incident vertices
$\oneset{u, v}$ and write $e=uv$ if either  $s(e) =u$ and $t(e)=v$ or $s(\ov e) =u$ and $t(\ov e)=v$.

For $S\sse V(\GG)$ and $v \in V(\GG)$ define as usual in graph theory 
$\GG(S)$ (resp.{} $\GG - S$) to be the  subgraph of $\GG$ which is induced by  the vertex set $S$ (resp.{} $V(\GG) \sm S$)  
and $\GG - v = \GG - \{v\}$. We also write $\Comp S$ for the complement of $S$, i.e., $\Comp S = V(\GG) \sm S$.
Likewise for %$F\sse E$ and 
$e \in E(\GG)$ we let % $\GG - F = (V(\GG), E(\GG) \sm F)$ and 
$\GG - e = (V(\GG), E(\GG) \sm \{e\})$.

A \emph{path} is a subgraph $(\{v_0,\ldots,v_n\}, \,\{e_1,\ldots,e_n\})$ such that $s(e_i) =v_{i-1}$ and $t(e_i)=v_{i}$  for all $1 \leq i \leq n$. It is \emph{simple} if 
the vertices are pairwise disjoint. It is \emph{closed} if $v_0=v_n$.
A \emph{cycle} is a closed path with $n \geq 3$ such that  $v_1,\ldots,v_n$ is a simple path. 

The \emph{distance} $d(u,v)$ between $u$ and $v$ is defined as the length (i.e., the number of edges) of the shortest path connecting $u$ and $v$. We let 
 $d(u,v) = \infty$ if there is no such path. 
A path  $v_0,\dots,v_n$ is called \emph{geodesic} if $n= d(v_0,v_n)$. An infinite path is defined as geodesic if all its finite subpaths are geodesic. For $A,B\sse V(\Gamma)$ the distance is defined as $d(A,B) = \min\set{d(u,v)}{u\in A,v\in B}$. 

A graph $\Gamma$ is called \emph{connected} if $d(u,v) < \infty$ for all vertices $u$ and $v$. 
A  \emph{tree}  is a connected graph without any cycle. If $T =(V,E)$ is a tree, we may fix a \emph{root} $r \in V$. This gives an orientation $E^+ \sse E$  by directing all edges ``away from the root''. 
In this way a rooted tree becomes a directed graph $(V, E^+)$ which refers to the 
tree $T= (V,E^+\cup E^-)$,  where  $E^- =  E\sm E^+$.

In the following, when we write $\Gamma$ we always mean a locally finite and connected graph, whereas the capital letter  $T$ refers to a tree, which does not need to be locally finite, in general.

\subsection{Preliminaries on groups}\label{sec:pgroup}
The paper is mainly concerned with finitely generated groups. Let $G$ be a group with $1$ as neutral element. The \emph{Cayley graph} 
$\Gamma$ of $G$ depends on $G$ and on a generating set  $X \sse G$.
It is defined by $V(\Gamma) = G$ and 
$E(\Gamma) = \set{(g,ga)}{g\in G \text { and } a\in X\cup X^{-1}}$, with the obvious incidence functions 
$s(g,ga) = g$, $t(g,ga) = ga$, and involution $\ov{(g,ga)} = (ga,a)$. %\aww{ That means we only consider the undirected Cayley graph.} 
For an edge $(g,ga)$ we call $a$ the label of $(g,ga)$ and extend this definition also to paths. Thus, the label of a path is a sequence 
(or \emph{word}) in the free monoid $X^*$.  The Cayley graph is without loops and without multi-edges. It is  connected because $X$ generates $G$. The Cayley graph 
$\Gamma$ is locally finite \IFF $X$ is finite. Sometimes we suppress $X$ if there is a standard choice for the generating set. For example, if
$G= F(X)$ is the free group over $X$, then the Cayley graph of $G$ refers 
to $X$ and it is a tree. By the \emph{infinite grid} we mean the Cayley graph of $\Z \times \Z$ with generators $(1,0)$ and $(0,1)$.

A group $G$ \emph{acts} on a graph  $\GG = (V,E)$ if there is an action of $G$ on $V$, denoted by $v\mapsto g\cdot v$,  and an action on $E$, denoted by $e\mapsto g\cdot e$, such that 
$s(g\cdot e) =  g\cdot s( e)$, $t(g\cdot e) =  g\cdot t( e)$, and $g\cdot\ov e = \ov{g\cdot e}$
for all $g \in G$ and $e \in E$. If $G$ acts on $\GG$, then we can define
its quotient graph $G \bs \GG$. Its vertices (resp.~edges) are the 
\emph{orbits} $G\cdot u$ for $u \in V$  (resp.~$G\cdot e$ for $e \in E$). 
We say that \emph{$G$ acts with finitely many orbits} if $G \bs \GG$ is finite. 

Let $\cG$ denote some class of groups. A group $G$ is called \emph{virtually $\cG$} if it has a subgroup of finite index which is in $\cG$.
Virtually finite groups are finite. The focus in this paper is 
on virtually free groups.

%%%%%%%%%%%%%%%%%%%%%%%%%%%%%%%%%%%%%%%%%%%%%%%%%%%%%%%%%%%%%%%%%%%%%%%%%%%%%%%%%%%%%%%%%%%%%%%%%%%%%%%%%%%%%%%%%%%%%%%%%%%%%
\section{Cuts and structure trees}\label{sec:cuts}
The constructions in this section follow the paper by Kr\"on \cite{kroen10} which gives a simplified approach to Dunwoody's constructions of cuts \cite{Dunwoody82}. The main difference between this section and the paper of Kr\"on lies in the definition of minimal cuts.
\subsection{Cuts and optimally nested cuts}
Let $\GG= (V(\GG),E(\GG))$ be a connected and locally finite graph. For a subset $C\sse V(\GG)$ we define the \emph{edge}- and \emph{vertex-boundaries} of $C$ as follows: 
\[
\begin{array}{lrcl}
\text{Edge-boundary: }&\delta C & =&\set{uv\in E(\Gamma)}{u\in C, v\in \Comp{C}}.\\
\text{Vertex-boundary: } &\beta C&=& \set{\; u\in V(\Gamma)}{\exists\, v\in V(\Gamma) \text{ with } uv\in \delta C}.
 %\text{ = vertex-boundary}.
\end{array}
\]

\begin{dfn}\label{dfn:cut}
A \emph{cut} is a subset $C \subseteq V(\GG)$  such that the following conditions hold.
 \begin{enumerate}
%\item $C\neq \es \neq \Comp{C}$. 
\item $C$ and $\Comp{C}$ are non-empty and connected. 
\item $\delta C$ is finite.
\end{enumerate}
The \emph{weight} of a cut is defined by $\abs{ \del C}$. If $\abs{\delta C}\leq k$, then $C$ a called a \emph{$k$-cut}.
\end{dfn}
 
We are mainly interested in cuts where both parts $C$ and $\Comp{C}$ are infinite. 
However it might be that there are no such cuts. Consider the infinite grid
$\Z\times \Z$, i.e., the graph with vertex set $\Z\times \Z$ where $(i,j)$ is adjacent to the four vertices $(i,j\pm 1 )$ and $(i\pm 1,j)$. It is connected and locally finite, but there are no cuts of finite weight splitting the grid into two infinite parts.
 
%\ifdraft  \vdd{Man versteht es an dieser Stelle nicht, besser WECH??? (Liegt unter einem ifdraft!)
% 
%In a connected, locally finite graph, cuts can be constructed 
%by some ping-pong in case there is some finite set $S$ such that $\GG - S$ is not connected. Start with some $S\sse V(\GG)$ such that $\GG - S$ splits into more than one connected component and let $D$ be one of them. Then 
%choose $C$ to be one connected component in $\GG-D$. It turns out that
%$\Comp C$ is connected and non-empty. Moreover, $\bet C \sse \bet S$ is finite. Note that all cuts arise this way.}
%\fi

The following well-known observation is crucial. It can be found e.g.{} in \cite{ThomassenW93} in a slightly different formulation:

\begin{lemma}%[\cite{ThomassenW93}]
\label{lm:endl_k_cuts}%\label{lm:fin_k_cuts}
Let $S \sse V(\Gamma)$ be finite
and $k \geq 1$. There are only finitely many $k$-cuts $C$ with $\beta{C}\cap S \neq \es$.
\end{lemma}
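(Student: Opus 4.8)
The plan is to reduce the statement to a single vertex and then to exclude an infinite family of cuts by a pointwise--limit (compactness) argument, the decisive ingredient being that each cut is rigidly determined by its edge-boundary. At the outset I would record this rigidity: since $C$ and $\Comp C$ are connected and $\delta C$ consists exactly of the edges running between them, the graph $\Gamma-\delta C$ (obtained by deleting all edges of $\delta C$) has precisely the two connected components $C$ and $\Comp C$. In particular, a cut is determined by the set $\delta C$ together with the information which side contains a fixed vertex.

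First I would reduce. As $S$ is finite it suffices to fix a single $v\in S$ and bound the number of $k$-cuts $C$ with $v\in\beta C$. If $v\in\beta C$, then $v$ is an endpoint of some edge $e_0=vw\in\delta C$, so $v$ and $w$ lie on opposite sides of $C$. Local finiteness gives only finitely many edges incident with $v$, and there are two choices for the side of $v$; hence by the pigeonhole principle it is enough to show: for a fixed incident edge $e_0=vw$ there are only finitely many $k$-cuts $C$ with $e_0\in\delta C$, $v\in C$ and $w\in\Comp C$.

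Assume for contradiction that there are infinitely many such cuts $C_1,C_2,\dots$. I would encode each $C_i$ as the $\smallset{+,-}$-colouring of $V(\Gamma)$ that is $+$ on $C_i$ and $-$ on $\Comp{C_i}$. Since $\Gamma$ is countable, a diagonal argument extracts a subsequence converging pointwise to a colouring with $+$-set $C$; thus $v\in C$, $w\in\Comp C$, and for every fixed edge $e$ one has $e\in\delta C$ exactly when $e\in\delta C_i$ for all large $i$. If $\delta C$ had $k+1$ edges they would all lie in $\delta C_i$ for large $i$, contradicting $\abs{\delta C_i}\le k$; hence $\abs{\delta C}\le k$. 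Now choose a radius $m$ so large that both endpoints of every edge of the finite set $\delta C$ lie within distance $m$ of $v$. By pointwise convergence on the finitely many edges inside this ball we get, for all large $i$, that $\delta C\sse\delta C_i$ and that $\delta C_i$ agrees with $\delta C$ on every edge within distance $m$ of $v$. Writing $G_i$ for the remaining edges of $\delta C_i$ we thus have $\delta C_i=\delta C\cup G_i$ with $\delta C\cap G_i=\es$, where every edge of $G_i$ lies outside the ball of radius $m$ around $v$.

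Finally I would derive the contradiction. Since $\delta C\sse\delta C_i$, the graph $\Gamma-\delta C$ has at most as many components as $\Gamma-\delta C_i$, namely two; and it has at least two because $e_0\in\delta C$ separates $v$ from $w$. Hence $\Gamma-\delta C$ has exactly the two components $C,\Comp C$, so $C$ is itself a $k$-cut and the rigidity fact applies to it. Fix now a large index $i$ with $C_i\neq C$; by rigidity $\delta C_i\neq\delta C$, so $G_i\neq\es$, and each edge of $G_i$ lies inside $C$ or inside $\Comp C$. Deleting the non-empty set $G_i$ from $\Gamma-\delta C$ then either disconnects one of the two sides, producing three or more components in $\Gamma-\delta C_i$, or leaves both sides connected, forcing $C_i=C$; both alternatives are absurd. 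I expect the main obstacle to be precisely this last dichotomy: a naive attempt to bound the \emph{diameter} of the boundary is hopeless, since a long finite cycle carries $2$-cuts whose two boundary edges are arbitrarily far apart. The finiteness must instead be squeezed out of the interplay between local finiteness, the pointwise limit, and the rigidity forcing $\Gamma-\delta C$ to split into exactly two pieces.
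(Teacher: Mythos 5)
Your proof is correct, and it takes a genuinely different route from the paper's. Both arguments begin with the same reduction (finitely many vertices of $S$, finitely many incident edges, two choices of side, so it suffices to bound the $k$-cuts whose boundary contains a fixed edge $e_0=vw$ with prescribed sides), but from there the paper argues by induction on $k$: if $e_0$ is a bridge, every such cut has weight one; otherwise every $k$-cut through $e_0$ becomes a $(k-1)$-cut of $\Gamma-e_0$ whose boundary must meet a fixed $v$--$w$ path in $\Gamma-e_0$, and the induction hypothesis applied to the finitely many edges of that path finishes the argument. You instead use sequential compactness of $\smallset{+,-}^{V(\Gamma)}$: a pointwise limit of infinitely many such $k$-cuts is again a $k$-cut (your two-component count), and the rigidity fact that a cut is determined by its edge-boundary together with the side of a basepoint yields the contradiction via your dichotomy on $G_i$. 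Each approach buys something. The paper's induction is finitary and effective: unwinding it bounds the number of $k$-cuts through a fixed edge recursively in terms of the $(k-1)$-cuts through the edges of a fixed path, so explicit bounds can be extracted. Yours is non-quantitative, but it avoids induction entirely and isolates the reusable rigidity property of cuts; note that it silently uses that $V(\Gamma)$ is countable, which does hold here because $\Gamma$ is connected and locally finite, but deserves a word. Two minor points: the radius-$m$ localization is redundant, since your final dichotomy uses only $\delta C\sse\delta C_i$ and $G_i\cap\delta C=\es$, the latter being automatic from $G_i=\delta C_i\sm\delta C$; and $v$, $w$ lie in different components of $\Gamma-\delta C$ because $v\in C$, $w\in\Comp{C}$ and every $v$--$w$ path must cross $\delta C$, not because of the mere membership $e_0\in\delta C$.
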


\begin{proof}It is enough to prove the result for $S= \os{u,v}$ where $e=uv \in E(\GG)$ is some fixed edge. 
Since $\Gamma$ is locally finite, it is enough to show that the set of $k$-cuts $C$ with $e \in \delta C$ is finite. 
This is now trivial for $k=1$ because there is at most one cut with $\smallset{e} = \delta C$. If the graph $\Gamma -  e$ becomes disconnected,
i.e., $e$ is a so-called \emph{bridge}, then all cuts with $e \in \delta C$ have weight $k=1$. 
Thus, we may assume that  the graph $\Gamma -  e$ is still connected; and we may fix a path {}from $u$ to $v$ in  $\Gamma -  e$.
Every $k$-cut $C$ with $e \in \delta C$ becomes a $k-1$-cut $C$ in the graph  $\Gamma -  e$. Such a cut must use one edge of the 
path from $u$ to $v$ in  $\Gamma -  e$ because otherwise we had either both $u,v \in C$ or both 
$u,v \in \Comp C$. By induction, there are only finitely
many $k-1$-cuts using edges from a fixed path. Thus, we are done. 
\end{proof}

We are interested in bi-infinite simple paths which can be split into two 
infinite pieces by some  cut of finite weight. 
For a bi-infinite simple path $\alp$ denote:
\begin{align*}
\cC(\alp) &= \set{C \sse V(\GG)}{\text{$C$ is a cut and } \abs{\alp \cap C} = \infty = \abs{\alp \cap \Comp C}}.\\
\cC_{\min}(\alp) &= \set{C \in \cC(\alp)}{\text{$\abs{\del C}$ is minimal in }\cC(\alp)}.
\end{align*}

Thus, $\cC(\alp)\neq \es$ \IFF there is a cut of finite weight such that
the graph $\alp - \del C$ has exactly two infinite components each of these two being  a one-sided infinite subpath of $\alp$. 
We define the set of \emph{minimal cuts} $\cC_{\min}$ by 
 $$ \cC_{\min} = \bigcup\set{\cC_{\min}(\alp)}{\alp \text{ is a  bi-infinite simple path} }.$$
 In the infinite grid $\Z \times \Z$ we  have $\cC_{\min} = \es$.
Note that the set of minimal cuts may contain cuts of very different weight. 
Actually we might have $C,D \in \cC(\alp) \cap \cC_{\min}$
with $C \in \cC_{\min}(\alp)$, but  $D \notin \cC_{\min}(\alp)$. 
In such a case, there must be another  bi-infinite simple path $\bet$ 
with  $D \in \cC(\alp) \cap \cC_{\min}(\bet)$ and $\abs{\del C} < \abs{\del D}$. 
Here is an example: Let $\GG$ be the  subgraph of the infinite grid
$\Z \times \Z$ which is induced by the pairs $(i,j)$ satisfying  $j\in \{0,1\}$ or $i=0$ and $j\geq 0$. 
Let $\alp$ be the bi-infinite simple path with $i=0$ or $j=1$ and $i\geq 0 $ and let $\bet$ be the bi-infinite simple path defined by $j=0$. 
%containing $(1,1), (0,2)$ (resp. containing $(1,1), (1,0),(1,-1)$. 
Then there are such cuts with $\abs{\del C} = 1$ and $\abs{\del D} = 2$, as depicted in \prettyref{fig:different_deltas}.

\begin{figure}[ht]
\begin{center}
\begin{tikzpicture}[scale = 0.7]
\def\width{5};
\def\height{4};

\draw (-\width -0.5,1) -- (\width +0.5,1) ;
\draw (-\width -0.5,0) -- (\width +0.5,0) ;
\draw (0,0) -- (0,\height + 0.5) ;
\node () at (-\width -1.1, 0.5) {\footnotesize{$\cdots$}};
\node () at (+\width +1.1, 0.5) {\footnotesize{$\cdots$}};
\node () at (0, \height + 1.3) {\footnotesize{$\vdots$}};
\node (oo) at (0,0){};
\node [below=2pt] {$(0,0)$};
\foreach \x in {1,...,\width}
{
	\draw (\x,1) -- (\x,0);
	\draw (-\x,1) -- (-\x,0);
}
\foreach \y in {2,...,\height}
{
	\draw (-0.075,\y) -- (0.075,\y);
}

\node () at (1.5, 2.0) {$\delta D$};
\draw[dashed] (1.5,1.5) -- (1.5,-0.5) ;
\node () at (-1.2, 2.5) {$\delta C$};
\draw[dashed] (-0.7,2.5) -- (0.7,2.5) ;

\draw[dotted,line width=1.4pt] (-\width -0.8,0) -- (\width +0.8,0) ;
%\node () at (-\width -1.1, 0.) {\footnotesize{$\cdots$}};
%\node () at (\width +1.1, 0.) {\footnotesize{$\cdots$}};

\draw[dotted,line width=1.4pt] (0,1) -- (0,\height + 0.8) ;
\draw[dotted,line width=1.4pt] ( 0,1) -- (\width +0.8,1) ;
%\node () at (\width +1.1, 1.) {\footnotesize{$\cdots$}};
\node () at (\width +0.8, 1.4) {$\alpha$};
\node () at (\width +0.8, -0.4) {$\beta$};

\end{tikzpicture}
\end{center}
\caption[]{
The subgraph of the grid $\Z \times \Z$ induced by the pairs $(i,j)$ satisfying  $j\in \{0,1\}$ or $i=0$ and $j\geq 0$. Here we have $D \in \cC(\alp) \cap \cC_{\min}$ but $D \notin \cC_{\min}(\alp)$.}\label{fig:different_deltas}
\end{figure}
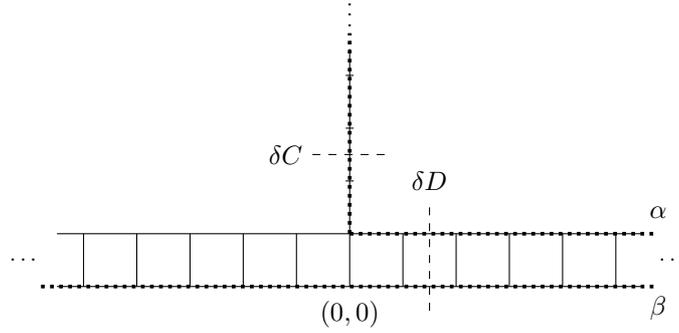

\begin{dfn}\label{dfn:M_m}
Two cuts $C$ and $D$ are called \emph{nested} if one of the four inclusions
$C\sse D$, $C\sse \Comp{D}$, $\Comp{C}\sse D$, or $\Comp{C}\sse\Comp{D}$ holds.
\end{dfn}

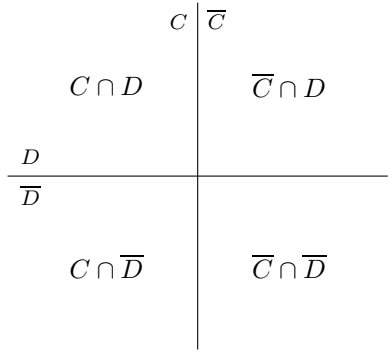
\begin{figure}[ht]
\begin{center}
\begin{tikzpicture}
\draw (0,2.3) -- (0,-2.3){};
\draw (2.5,0) -- (-2.5,0){};

\node (a) at (-1.2, 1.2) {$C\cap D$};
\node (a) at (1.2, 1.2) {$\Comp{C}\cap D$};
\node (a) at (-1.2, -1.2) {$C\cap \Comp{D}$};
\node (a) at (1.2, -1.2) {$\Comp{C}\cap \Comp{D}$};

\node[anchor=base] (a) at (-0.25, 1.95) {\footnotesize{$C$}};
\node[anchor=base]  (a) at (0.25, 1.95) {\footnotesize{$\Comp{C}$}};
\node (a) at (-2.2, 0.25) {\footnotesize{$D$}};
\node (a) at (-2.2, -0.25) {\footnotesize{$\Comp{D}$}};
\end{tikzpicture}
\end{center}
\caption[]{The corners of $C$ and $D$. Nested cuts have one empty corner.}\label{fig:corners1}
\end{figure}

The set $\oneset{C \cap D, C\cap \Comp{D}, \Comp{C}\cap D, \Comp{C}\cap \Comp{D}}$ is called the set of \emph{corners} of $C$ and $D$, see Figure~\ref{fig:corners1}. Two corners $E, E'$ of $C$ and $D$ are called \emph{opposite} if either $\smallset{E, E'} =\smallset{C \cap D, \, \Comp{C}\cap \Comp{D}} $ or $\smallset{E, E'} = \smallset{\Comp{C}\cap D, \, C\cap \Comp{D} }$. 
Two different corners are called \emph{adjacent} if they are not opposite.  
Note that two cuts $C,D$ are nested if and only if one of the four corners of $C$ and $D$ is empty.

 We define for every cut $C$ and $k \geq 1$ a cardinality $m_k(C)$ as follows: 
$$m_k(C) = \abs{\set{D}{\text{$C$ and $D$ are not nested and $D$ is a $k$-cut.}}}.$$ 

\begin{lemma}\label{lm:fred}
Let $k\in \N$ and $C$ be a cut, then $m_k(C)$ is finite.  
\end{lemma}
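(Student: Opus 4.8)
The plan is to reduce the statement to \prettyref{lm:endl_k_cuts} by producing a single finite vertex set $S$, depending only on $C$, with the property that every $k$-cut $D$ that is not nested with $C$ satisfies $\beta D\cap S\neq\es$. Applying \prettyref{lm:endl_k_cuts} to this $S$ then bounds the number of such $D$, which is exactly $m_k(C)$.

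First I would extract the combinatorial content of non-nestedness: recall that $C$ and $D$ are nested iff one of their four corners is empty, so ``not nested'' means all four corners $C\cap D$, $C\cap\Comp D$, $\Comp C\cap D$, $\Comp C\cap\Comp D$ are non-empty. Since $D$ is connected and meets both $C$ (via $C\cap D$) and $\Comp C$ (via $\Comp C\cap D$), a path inside $D$ joining these two corners must traverse an edge of $\delta C$; both endpoints of that edge lie in $D$, so there is a vertex $x\in\beta C\cap D$. Running the identical argument in the connected set $\Comp D$ yields a vertex $y\in\beta C\cap\Comp D$. Thus every such $D$ separates two vertices $x,y$ of the \emph{finite} set $\beta C$ (finite because $\delta C$ is finite and $\GG$ is locally finite).

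The tempting shortcut — to argue that $\beta D$ must lie near $\beta C$ — is false, since the boundaries of two crossing cuts can be arbitrarily far apart; the needed localisation must instead come from connectivity of $\GG$. So for each unordered pair of vertices of $\beta C$ I would fix once and for all a path in $\GG$ joining them, and let $S$ be the union of $\beta C$ with the finitely many finite vertex sets $V(P)$ of these paths; then $S$ is finite. Given any $D$ not nested with $C$, take the prescribed path $P$ joining the associated $x\in\beta C\cap D$ and $y\in\beta C\cap\Comp D$. Since $x\in D$ and $y\in\Comp D$, the path $P$ crosses $\delta D$, so an endpoint of the crossing edge lies in $V(P)\cap\beta D\sse S\cap\beta D$. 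Hence $\beta D\cap S\neq\es$, and \prettyref{lm:endl_k_cuts} gives that only finitely many such $D$ exist, i.e. $m_k(C)<\infty$.

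The one genuine obstacle is the observation in the third paragraph: one should not try to place $\beta D$ close to $\beta C$, but instead recognise that the correct invariant is that $D$ separates a pair of boundary vertices of $C$. Once this is seen, the whole argument collapses onto the already-proved finiteness of $k$-cuts whose vertex-boundary meets a fixed finite set. A small but essential bookkeeping point is to choose the connecting paths \emph{before} quantifying over $D$, so that $S$ is independent of $D$.
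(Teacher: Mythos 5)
Your proof is correct and follows essentially the same route as the paper: both reduce the claim to \prettyref{lm:endl_k_cuts} applied to a finite set $S$ consisting of $\beta C$ together with paths connecting its vertices (the paper takes a finite connected subgraph containing $\beta C$), and both use connectivity of $D$ and $\Comp{D}$ to show that every $k$-cut crossing $C$ has vertex-boundary meeting $S$. The only difference is the direction of the argument --- you show directly that non-nested cuts hit $S$, while the paper shows contrapositively that cuts whose boundary misses $S$ are nested --- which is immaterial.
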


\begin{proof}
Let $S$ be a finite 
connected subgraph of $\GG$ containing all vertices of $\bet C$. 
The number of $k$-cuts $D$ with $\bet D \cap S \neq \es$
is finite by \prettyref{lm:endl_k_cuts}. For all other cuts we 
may assume (by symmetry) that $\bet C \sse D$.
Now assume that both, $C\cap \Comp D \neq \es$ and $\Comp C\cap \Comp D \neq \es$. Then we can connect a vertex $c \in C$ with some vertex $\ov c \in \Comp C$ inside the connected set  $\Comp D$. This must involve a vertex from 
$\bet C$, but $\bet C \sse D$. Hence, either $C\sse\Comp D$ or $\Comp C\sse\Comp D$. 

%Let $S$ be a finite 
%connected subgraph of $\GG$ containing all the vertices of $\bet C$. 
%The number of $k$-cuts $D$ with $\bet D \cap S \neq \es$
%If $C$ and $D$ are not nested, then there are paths from $C \cap D$ to 
%$\Comp C \cap D$ inside $D$ and from $C \cap \Comp D$ to 
%$\Comp C \cap \Comp D$ inside $\Comp D$. The first path yields a vertex $u \in \bet C \cap D$ and the second one yields a vertex $v\in \bet C \cap \Comp D$. 
%Hence $u,v \in S$ and we see that $\bet D \cap S \neq \es$. Since $S$ is finite,  \prettyref{lm:endl_k_cuts} shows that only finitely many $k$-cuts can be not nested with $C$.
\end{proof}

We are mainly interested in graphs $\GG$ where the weight over all 
cuts in $\cC_{\min}$ can be bounded by some constant. This leads to the notion 
of \emph{accessible graph} due to \cite{ThomassenW93}: 
\begin{dfn}\label{dfn:accessible}
A graph is called \emph{accessible} if there exists a constant $k \in \N$ such that for every bi-infinite simple path $\alp$ either $ \cC(\alp) $ is empty or $ \cC(\alp) $ contains some $k$-cut
 \end{dfn}

For the rest of this section we  assume that $\GG$ is accessible. 
Thus, there is some constant $k$ such that  for all 
bi-infinite simple paths $\alp$ with $ \cC(\alp)\neq \es $ there exists some  cut $ C \in \cC(\alp) $ 
with $\abs{\delta C} \leq k$.

Fixing  this number $k$ let us define, by \prettyref{lm:fred},  for each cut $ C $  a natural number as follows: 
$$m(C) = \abs{\set{D }{\text{$C$ and $D$ are not nested and $D$ is a $k$-cut}}}.$$

We use the following notation, where $\alp$ denotes a bi-infinite simple path:
\begin{align*}
m_{\alp}&= \min \set{m(C)} {C\in \cC_{\min}(\alp)}\\
\Copt(\alp)&=  \set{C\in \cC_{\min}(\alp)} {m(C)=m_{\alp}}\\
\Copt&= \bigcup\set{\Copt(\alp)}{\alp \text{ is a  bi-infinite simple path} }
\end{align*}

\begin{dfn}\label{dfn:c_opt}
A cut $C\in \Copt$ is called an \emph{optimally nested cut}.
For simplicity, an optimally nested cut is also called \emph{optimal cut}.
\end{dfn}

In some sense we can forget all other cuts, we  just focus on 
optimal cuts. This viewpoint is possible because every ``cuttable'' bi-infinite simple path
is ``cut'' into two infinite parts at least by one optimal cut.
The next proposition is the main result in this section. 
\begin{proposition}\label{prop:opt_nested}
Let $C,D \in \Copt$. Then $C$ and $D$ are nested.
\end{proposition}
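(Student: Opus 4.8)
The main tool will be a counting identity on the four corners of $C$ and $D$. Writing $e(X,Y)$ for the number of edges of $\GG$ having one endpoint in $X$ and one in $Y$, and sorting the edges of $\delta C$ and $\delta D$ according to which pair of corners they join, I obtain the two submodularity relations
\[
\abs{\delta(C\cap D)} + \abs{\delta(\Comp C\cap\Comp D)} = \abs{\delta C} + \abs{\delta D} - 2\, e(\Comp C\cap D,\; C\cap\Comp D),
\]
\[
\abs{\delta(\Comp C\cap D)} + \abs{\delta(C\cap\Comp D)} = \abs{\delta C} + \abs{\delta D} - 2\, e(C\cap D,\; \Comp C\cap\Comp D).
\]
In particular each pair of \emph{opposite} corners has total boundary weight at most $\abs{\delta C}+\abs{\delta D}$, the defect being twice the number of edges joining the complementary diagonal. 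This is the quantitative heart of the argument, and it is where cuts of different weight are allowed to enter.

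Next comes the \emph{component trick}, needed because the proof proceeds by contradiction: assume $C$ and $D$ are not nested, so by the remark after Figure~\ref{fig:corners1} all four corners are non-empty. A corner such as $C\cap D$ need not be connected, hence need not be a cut; but its \emph{complement} $\Comp C\cup\Comp D$ is connected, since $\Comp C$ and $\Comp D$ are connected and meet in the non-empty corner $\Comp C\cap\Comp D$. Fix a bi-infinite simple path $\alp$ with $C\in\Copt(\alp)$. As $\abs{\delta C}<\infty$, the path $\alp$ has one infinite tail eventually inside $C$ and one eventually inside $\Comp C$; since $\abs{\delta D}<\infty$ as well, each tail is eventually on a single side of $D$, hence eventually inside one corner, and therefore inside one connected component $K$ of that corner. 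Such a $K$ is a genuine cut: $\Comp K$ is $\Comp C\cup\Comp D$ together with the remaining components of the corner (each attached to $\Comp C\cup\Comp D$), so $\Comp K$ is connected, while $\delta K\sse\delta(C\cap D)$ is finite with $\abs{\delta K}\le\abs{\delta(C\cap D)}$. This manufactures honest cuts in $\cC(\alp)$ out of the corners.

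I then invoke weight minimality. Optimal cuts have weight at most $k$ (accessibility gives a $k$-cut in $\cC(\alp)$, and $C$ has minimal weight there), so $C$ and $D$ are both $k$-cuts; assume $\abs{\delta C}\le\abs{\delta D}$. If $D$ separates the two ends of $\alp$, the tails sit in a pair of opposite corners, and the component trick yields two cuts $K,L\in\cC(\alp)$ with $\abs{\delta K}+\abs{\delta L}\le\abs{\delta C}+\abs{\delta D}$. Minimality of $\abs{\delta C}$ in $\cC(\alp)$ forces $\abs{\delta K},\abs{\delta L}\ge\abs{\delta C}$, so $\abs{\delta C}+2\,e(\Comp C\cap D,C\cap\Comp D)\le\abs{\delta D}$; when the weights are equal this collapses to equalities, the diagonal edge count vanishes, the corner is already connected, and $K=C\cap D\in\cC_{\min}(\alp)$. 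The remaining configurations — strict inequality of weights, and the ``adjacent-corner'' case where $D$ does not separate the ends of $\alp$ and the submodular defect points the wrong way — are not closed by weight alone.

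That remaining case is the genuinely new step, and it is handled through $m$. The plan is to exhibit a corner-component $C'\in\cC_{\min}(\alp)$ that is nested with $D$ and for which the family of $k$-cuts not nested with $C'$ is a \emph{proper} subfamily of those not nested with $C$, with $D$ itself witnessing the strict inclusion; this yields $m(C')<m(C)=m_\alp$, contradicting $C\in\Copt(\alp)$. I expect this to be the main obstacle: passing to a smaller corner-cut decreases $m$ only after one carefully tracks which $k$-cuts can still cross $C'$ (the naive monotonicity of crossings fails, since $C'\sse C$ does not by itself force a cut crossing $C'$ to cross $C$). It is precisely because $C$ and $D$ may have different weights — so that Kr\"on's globally-minimal argument does not apply verbatim — that the refined invariant $m$, rather than the weight, is what closes the proposition.
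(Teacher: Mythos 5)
Your submodularity identity, the component trick, and the weight-minimality argument are all correct, and they reproduce the first half of the paper's proof (the paper's bookkeeping $\abs{\del E}=a+b+c+d$ is exactly your identity restricted to components); in particular you correctly reach the point where suitable corner-components lie in $\cC_{\min}$. The gap is in the last step, which is only announced, not proved --- and the announced strategy cannot work. You propose to find a \emph{single} corner-cut $C'\in\cC_{\min}(\alp)$, nested with $D$, such that every $k$-cut not nested with $C'$ is also not nested with $C$, with $D$ witnessing properness, so that $m(C')<m(C)$. But this inclusion of crossing-sets is false in general, for exactly the reason you flag yourself: nestedness with $C$ does not pass to corners. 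Concretely, take $C'=C\cap D$ and any $k$-cut $F$ with $\Comp{C}\sse F$; then $F$ is nested with $C$, yet if $\Comp{F}\sse C$ meets both $C\cap D$ and $C\cap\Comp{D}$ while $F$ meets $C\cap D$, then all four corners of $F$ and $C'$ are non-empty, so $F$ crosses $C'$. Nothing rules such cuts out, so even the non-strict inequality $m(C\cap D)\le m(C)$ cannot be established, and no choice of a single corner repairs this.

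The repair --- and this is what the paper does --- is to use the two opposite corners simultaneously. With $E=C\cap D$ and $E'=\Comp{C}\cap\Comp{D}$ (normalized as in your setup, $E$ splitting $\alp$ and $E'$ splitting $\bet$), one proves two purely combinatorial claims: a cut nested with $C$ \emph{or} with $D$ is nested with $E$ \emph{or} with $E'$; and a cut nested with \emph{both} $C$ and $D$ is nested with \emph{both} $E$ and $E'$. Counting the union and the intersection of the two crossing-sets, these claims give $m(E)+m(E')\le m(C)+m(D)$, and the inequality is strict because $C$ itself is a $k$-cut which crosses $D$ but is nested with both $E$ and $E'$ (note that in the example above the troublesome $F$ is nested with $E'$ --- precisely the escape that the two-corner count exploits and a one-corner count misses). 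Since $E\in\cC_{\min}(\alp)$ and $E'\in\cC_{\min}(\bet)$ force $m(E)\ge m(C)$ and $m(E')\ge m(D)$ by optimality of $C$ and $D$, the strict sum inequality is the desired contradiction. Your framework (different weights, a second path $\bet$ when $D\notin\cC_{\min}(\alp)$) is compatible with all of this, but without the two-corner counting the proof does not close.
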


\begin{proof} We choose 
bi-infinite simple paths $\alp$ and $\bet$ such that 
$C\in \Copt(\alp)$ and $D \in \Copt(\bet)$. If possible, we let 
$\alp = \bet$.  In any case, we may assume that $m_{\alp}\geq m_{\bet}$.
The proof is by contradiction. Hence, 
we assume that $C$ and $D$ are not nested.

We distinguish two cases:
First, let  $D \in \cC_{\min}(\alp)$. Since $m(D) = m_{\bet}\leq m_\alp$, this implies 
$D \in \Copt(\alp)$ and therefore $\alp= \bet$. In particular, there are opposite corners $E$ and $E'$ such that $\abs{\alp \cap E} = \abs{\alp \cap E'}= \infty$. %Note that $C,D \in \cC_{\min}(\alp)$ implies $\abs{\delta C}= \abs{\delta D}$, too. 

In the other case we have $D \notin \cC_{\min}(\alp)$
and therefore $\alp \neq \bet$. We claim that there must be one corner $K$ of $C$ and $D$ such that $\abs{\alp \cap K} <  \infty$ and $\abs{\bet \cap K} <  \infty$. Indeed, if there is no such corner $K$, then infinite parts of $\alp$ and $\bet$ are in opposite corners. 
  In particular, $\alpha$ and $\beta$ are split by both by  $C$ as well as by  $D$ in two infinite pieces. This implies $\abs{\delta C}= \abs{\delta D}$, and hence $D \in \cC_{\min}(\alp)$. Thus such a corner $K$ exists and we define
 $E$ and $E'$ to be the adjacent corners of $K$. Without loss of generality, $E$ splits $\alpha$ into two infinite pieces and $E'$ splits $\beta$ into two infinite pieces.

\begin{figure}[ht]
\begin{center}
\begin{tikzpicture}
\draw (0,2.3) -- (0,-2.3){};
\draw (2.5,0) -- (-2.5,0){};
\draw[snake=snake, segment length = 9mm] (1.6, -2.) --(-1.6, 1.6) ;
\node() at (-1.35,1.6){$\alpha$} ;
\draw[snake=snake, segment length = 9mm] (1.5, 1.6) --(-1.,- 2);
\node() at (1.75,1.6) {$\beta$};

\node[anchor=base] (a) at (-0.25, 1.95) {\footnotesize{$C$}};
\node[anchor=base]  (a) at (0.25, 1.95) {\footnotesize{$\Comp{C}$}};
\node (a) at (-2.2, 0.25) {\footnotesize{$D$}};
\node (a) at (-2.2, -0.25) {\footnotesize{$\Comp{D}$}};
\end{tikzpicture}
\end{center}
\caption[]{For all four corners $K$ we have $\max\{\abs {K \cap \alp}, \abs {K \cap \bet}\} = \infty$.}\label{fig:corners2}
\end{figure}
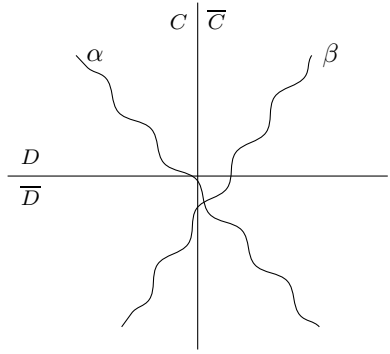

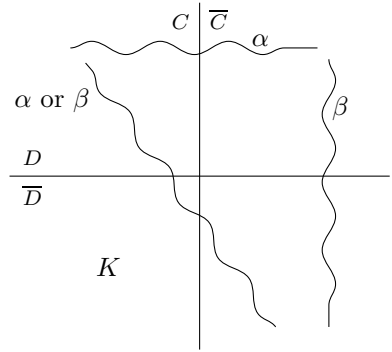
\begin{figure}[ht]
\begin{center}
\begin{tikzpicture}
\draw (0,2.3) -- (0,-2.3){};
\draw (2.5,0) -- (-2.5,0){};

\draw[snake=snake, segment length = 9mm] (1., -2) --(-1.5, 1.5) ;
\node() at (-1.94,1.0){$\alpha$ or $\beta$} ;
\draw[snake=snake, segment length = 9mm] (1.7, 1.55) --(1.7,- 2);
\node() at (1.85,0.9) {$\beta$};
\draw[snake=snake, segment length = 9mm] (-1.7, 1.7) --(1.55, 1.7);
\node() at (0.8,1.8) {$\alpha$};

\node (a) at (-1.2, -1.2) {$K$};

\node[anchor=base] (a) at (-0.25, 1.95) {\footnotesize{$C$}};
\node[anchor=base]  (a) at (0.25, 1.95) {\footnotesize{$\Comp{C}$}};
\node (a) at (-2.2, 0.25) {\footnotesize{$D$}};
\node (a) at (-2.2, -0.25) {\footnotesize{$\Comp{D}$}};
\end{tikzpicture}
\end{center}
\caption[]{For one corner $K$ we have 
$\max\{\abs {K \cap \alp}, \abs {K \cap \bet}\} <\infty$.}
\label{fig:corners3}
\end{figure}

Thus, in both cases, $E$ and $E'$ are defined such that 
$\abs{\alp \cap E} = \abs{\bet \cap E'}= \infty$.
  By interchanging, if necessary,  $C$ with $\Comp  C$ and 
  $D$ with $\Comp  D$,  we may assume  that 
  $E= C \cap D$ and $E' = \Comp C \cap \Comp  D$, too. 
  
  Thus, in all cases we are in the following situation: 
  
  $C$ and $D$ are not nested, $C\in \Copt(\alp)$, 
  $D\in \Copt(\bet)$, %$m_\alp \geq m_\bet$, VD Not used anymore
  $E= C \cap D$, $E' = \Comp C \cap \Comp  D$, and $\abs{\alp \cap E} = \abs{\bet \cap E'}= \infty$. Possibly $\alp =\bet$, 
but it is not yet clear that $E$ and $E'$ are cuts. 

The graph $\GG(E)$ contains an infinite connected component $F \sse E$ 
such that $\abs{\alp \cap F} = \infty$. Let us show that 
$\Comp F$ is non-empty and  connected. The set $\Comp F$ is non-empty
and infinite because 
$E ' \sse \Comp F$. Now fix a vertex $v\in E'$ and let 
$u\in \Comp F$. There is a path $\gam$ from $u$ to $v$ in $\GG$ and on this path there is a first vertex $w$ with $w \in \Comp C \cup \Comp  D$.
If the initial path from $u$ to $w$ was using a point of $F$, then it would 
be a path in $E$, and $u$ would be in the connected component $F$, which was excluded. Hence, we can connect $u$ to $w$ in $\GG -F$. Now, by symmetry 
 $w \in \Comp C$. But then $w, v \in \Comp C \sse \GG -F$ and $\Comp C$ is connected. Hence, $F$ is a cut. 
 
 In a symmetric way we find a cut $F' \sse E'$ such that $\abs{\bet \cap F'} = \infty$.
 Let us  show that $F = E \in  \cC_{\min}(\alp)$ and $F' = E'\in  \cC_{\min}(\bet)$.
 
 We can write $\abs{\del E} = a + b +c +d$, where
 \begin{align*}
a = & \abs{\set{xy}{x \in F \wedge y \in \Comp C \cap D}} \\
b = & \abs{\set{xy}{x \in F \wedge y \in E'}} \\
c = & \abs{\set{xy}{x \in F \wedge y \in C \cap \Comp D}} \\
d = & \abs{\set{xy}{x \in E\sm F \wedge y \notin E}} 
\end{align*}
Likewise, we have $\abs{\del E'} = a' + b' +c' +d'$, where
 \begin{align*}
a' = & \abs{\set{xy}{x \in F' \wedge y \in \Comp C \cap D}} \\
b' = & \abs{\set{xy}{x \in F' \wedge y \in E}} \\
c' = & \abs{\set{xy}{x \in F' \wedge y \in C \cap \Comp D}} \\
d' = & \abs{\set{xy}{x \in E'\sm F' \wedge y \notin E'}} 
\end{align*}
With the minimality of $\abs{\delta C}$ and $\abs{\delta D}$ we derive the following: 
\begin{align*}
a+b+c' \leq  &  \abs{\del C} \leq \abs{\del F}  =  a+b+c \\
  a'+b'+c  \leq  & \abs{\del D}  \leq  \abs{\del F'}  =  a'+b'+c'
\end{align*}
We conclude $\abs{\del C} = \abs{\del F}$ and $\abs{\del D}  = \abs{\del F'}$. 
This implies $F  \in  \cC_{\min}(\alp)$ and $F'\in  \cC_{\min}(\bet)$. 

We still have to show $E = F $ and $E'= F'$. For this it is enough to show that
$d = d' =0$. Assume by contradiction that $d + d' \geq 1$. Say, $d\geq 1$. Then we 
have $\abs{\del C} + \abs{\del D} > a+b+c+a'+b'+c'$. % because $a+b+c$ counts only edges of $\delta C$ and $\delta D$ which have one endpoint in $F$.
 This  contradicts the assertion 
$\abs{\del C} = \abs{\del F}$ and $\abs{\del D}  = \abs{\del F'}$. 
This yields $F= E  \in  \cC_{\min}(\alp)$ and $F'= E'\in  \cC_{\min}(\bet)$.
Since $C$ and $D$ are optimal cuts, we conclude 
$m(E) \geq m(C)$ and $m(E') \geq m(D)$.

The crucial step in the proof is the following assertion:
\begin{equation}\label{eq:frodo}
m(E)+m(E')<m(C)+m(D).
\end{equation}
Once we have established \prettyref{eq:frodo} we get an obvious contradiction 
to $m(E) \geq m(C)$ and $m(E') \geq m(D)$.

To see \prettyref{eq:frodo}, we show two claims: 
\begin{enumerate}
\item If a cut $F$ is  nested with $C$ \emph{or}  nested with  $D$, then $F$ is  nested with $E$ \emph{or}  nested  with $E'$:\label{eq_cap_nested}

By symmetry let $F$ be nested with $C$. 
If 
$F\sse C$ (resp.{} $\Comp F\sse C$), then $F\sse \Comp {E'}$ (resp.{} $\Comp F \sse \Comp {E'}$). 
If 
$C\sse F$ (resp.{} $C \sse \Comp F $), then $E \sse F$ (resp.{} $E \sse \Comp {F}$).

\item  If a cut $F$ is nested with $C$ \emph{and} nested with $D$, then $F$ is nested with $E$ \emph{and} nested with $E'$:\label{eq_cup_nested}

By symmetry in $F,\Comp{F}$ we may assume $C \sse F$ or $\Comp C \sse F$.
Using now the symmetry in $E,E'$
 we may assume that $C\sse F$. Hence we have $E\sse F$;  and it remains to show
 that $E'$ and $F$ are nested. 
 For $D \sse \Comp F$, we had $C \cap D = \es$. 
 For $\Comp D \sse \Comp F$, we had $C \cap \Comp D = \es$.
 Both is impossible because $C$ and $D$ are not nested. 
 For $D \sse F$ we obtain $\Comp{E'} =  C \cup D \sse F$ what implies that $E'$ and $F$ are nested. 
 Finally let $\Comp D \sse F$, then $E' \sse F$. Again $E'$ and $F$ are nested.
\end{enumerate}
\renewcommand{\labelenumi}{\arabic{enumi}.}

Putting claims \ref{eq_cap_nested} and \ref{eq_cup_nested} together yields: 
$ m(E)+m(E')\leq m(C)+m(D).$
Now, $C$ is nested with both corners $E$ and $E'$. Hence, $C$  is not counted on the left-hand side 
of the inequality. However, $C$ is  counted on the right-hand side because $C$  is not nested with $D$. That means the inequality in \prettyref{eq:frodo} is strict. Hence, we have shown the result of the proposition. 
\end{proof}

Analog results to \prettyref{prop:opt_nested} are Theorem 1.1 in \cite{Dunwoody82} or Theorem 3.3 of \cite{kroen10}. 
In contrast to these results, \prettyref{prop:opt_nested} allows that $\Copt$ may contain cuts of different weights. 
We have to deal with cuts of different weights because we wish to get a ``complete'' decomposition of virtually free groups like 
$(\Z \times \Z/2\Z)*\Z/2\Z$. Like in the graph in \prettyref{fig:different_deltas}, in the Cayley graph of this group cuts with weight $1$ and $2$ are necessary to split all bi-infinite paths into two infinite pieces, see \prettyref{fig:Cayley_graph_ZZ2Z2}. 

%%%%%%%%%%%%%%%%%%%%%%%%%%%%%% THE BIG UFO 
\begin{figure}[ht]
\begin{center}
\ifdraft Where have all  my UFOs gone? Time is like a jet plane. It moves too fast.
\else
\input{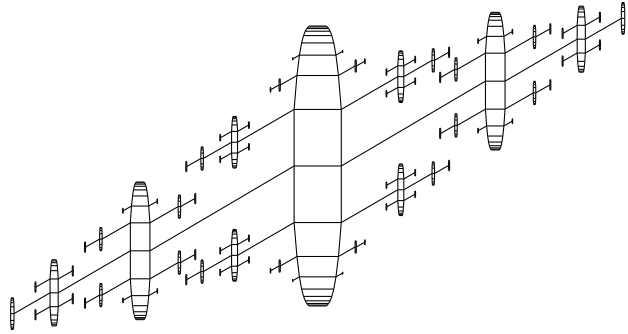}
\fi
\end{center}
\caption[]{The Cayley graph of the group $(\Z \times \Z/2\Z)*\Z/2\Z$
moving very fast towards your eyes.}\label{fig:Cayley_graph_ZZ2Z2}
\end{figure}

%%%%%%%%%%%%%%%%%%%%%%%%%%%%%%%%%%%%%%%%%%%%%%%
\subsection{The structure tree}\label{vdsec:tree_set} 
The notion of \emph{structure tree} is due to Dunwoody \cite{Dunwoody79}.
Recall that $\GG$ is assumed to be accessible, hence  $\Copt$ is defined and there is some $k \in \N$ such that every cut in $\Copt$ is a $k$-cut. 

\begin{lemma}\label{lm:tree_set}
Let $C,D\in\Copt$. Then the set $\set{E \in \Copt}{C\sse E\sse D}$ is finite. 
\end{lemma}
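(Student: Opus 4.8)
The plan is to reduce to \prettyref{lm:endl_k_cuts}. Recall that every cut in $\Copt$ is a $k$-cut for the fixed constant $k$. It therefore suffices to produce a single finite vertex set $S$ whose intersection with the boundary $\beta E$ is non-empty for every $E \in \Copt$ with $C \sse E \sse D$; the desired finiteness then follows at once from \prettyref{lm:endl_k_cuts} applied to $S$ and $k$.

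To construct $S$, first fix a vertex $u \in C$ and a vertex $w \in \Comp D$. Both choices are possible because $C$ and $\Comp D$ are non-empty by the definition of a cut. Since $\GG$ is connected, we may choose a finite path $\gam$ from $u$ to $w$, and we set $S = V(\gam)$, which is a finite set.

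Now let $E \in \Copt$ satisfy $C \sse E \sse D$. Then $u \in C \sse E$, while from $E \sse D$ we get $\Comp D \sse \Comp E$, so $w \in \Comp E$. Traversing $\gam$ from $u$ to $w$, consider the first vertex that lies in $\Comp E$; its predecessor on the path lies in $E$ and is adjacent to it, so this predecessor is a vertex of $E$ incident to an edge of $\delta E$, i.e.\ it belongs to $\beta E$. Since it is a vertex of $\gam$, it lies in $\beta E \cap S$. Hence $\beta E \cap S \neq \es$ for every such $E$.

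The essential observation is that all the intermediate cuts $E$ are trapped between the fixed endpoints $u \in C$ and $w \in \Comp D$, which forces each of their boundaries to cross the one fixed path $\gam$; I expect the boundary-crossing step to be the only thing requiring care, and it is routine. Notably, no further properties of optimal cuts (such as nestedness from \prettyref{prop:opt_nested}) are needed here. Applying \prettyref{lm:endl_k_cuts} with the finite set $S$ and the constant $k$ shows that there are only finitely many $k$-cuts whose boundary meets $S$, and in particular only finitely many $E \in \Copt$ with $C \sse E \sse D$.
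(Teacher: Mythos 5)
Your proof is correct and follows essentially the same route as the paper: choose $u\in C$, $w\in\Comp D$, a connecting path, note that every intermediate cut $E$ must separate $u$ from $w$ and hence its boundary meets the path, and conclude by \prettyref{lm:endl_k_cuts} together with the fact that accessibility makes every cut in $\Copt$ a $k$-cut. The only difference is that you spell out the boundary-crossing step, which the paper leaves implicit.
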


\begin{proof}
Choose two vertices $u\in C$ and $v \in \Comp{D}$, and a path 
$\gam$ in $\GG$ connecting them. Every cut $E$ with $C\sse E\sse D$ must separate $u$ and $v$ and thus contain a vertex of of $\gam$. With \prettyref{lm:endl_k_cuts} and the accessibility of $\Gamma$ it follows that there are only finitely many such cuts.
\end{proof}

The set $\Copt$
is partially ordered by $\sse$. By \prettyref{lm:tree_set}, the partial order is induced by 
its so-called \emph{Hasse diagram}. In the Hasse diagram there is an arc from 
$\Comp C\in \Copt$ to  $D \in \Copt$ \IFF $\Comp C \ssnq D$ and there is no $E\in \Copt$ between them. In dense orderings, like $(\Q, \leq)$, the Hasse diagram is empty, whereas in discrete orderings, like $(\Copt,\sse)$, the partial order is the reflexive and  transitive closure of the arc relation in the Hasse diagram.

If there is an arc from $\Comp C$ to  $D$, then there is also an  arc from $\Comp D$ to $ C$. In such a situation we put $C$ and $D$ 
in one class: 
\begin{dfn}\label{dfn:cuts_equiv}
For $C$, $D \in \Copt$ we define the relation $C\sim D$ by the following condition:  
\begin{equation*}
\text{Either  } C=D \text{ or both }  \Comp{C} \subsetneqq D \text{ and } \forall\; E \in \Copt:  \,\Comp{C} \subsetneqq E \subseteq D \implies D =E.
\end{equation*}
\end{dfn}
The intuition behind this definition is as follows: Consider $(C, \Comp C)$ for  $ C \in \Copt$ as an
edge set of some graph. Call edges $(C, \Comp C)$ and $(D, \Comp D)$
to be adjacent if $C \sim D$. This makes sense due the following property. 

\begin{lemma}\label{lm:frida}
The relation $\sim$ is an equivalence relation.
\end{lemma}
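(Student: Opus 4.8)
The plan is to verify reflexivity, symmetry and transitivity separately, the only real work being transitivity. Reflexivity is immediate from the clause $C=D$ in \prettyref{dfn:cuts_equiv}. Before the other two properties I would record one preliminary fact and one reformulation. The fact is that $\Copt$ is closed under complementation: all ingredients of the definition of $\Copt(\alp)$ are invariant under $C\mapsto\Comp C$, since $\del C=\del\Comp C$ (so the weight is unchanged), the condition $\abs{\alp\cap C}=\infty=\abs{\alp\cap\Comp C}$ defining $\cC(\alp)$ is symmetric, and $C,D$ are nested iff $\Comp C,D$ are nested, so $m(C)=m(\Comp C)$. Hence $C\in\Copt(\alp)\iff\Comp C\in\Copt(\alp)$. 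The reformulation is that, for $C\neq D$, the condition $C\sim D$ says precisely that $\Comp C\ssnq D$ and no element of $\Copt$ lies strictly between $\Comp C$ and $D$; by \prettyref{lm:tree_set} the order $(\Copt,\sse)$ is discrete, so this is the same as saying there is an arc $\Comp C\to D$ in the Hasse diagram.

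For symmetry, suppose $C\sim D$ with $C\neq D$, so $\Comp C\ssnq D$ with nothing of $\Copt$ strictly in between. Complementing gives $\Comp D\ssnq C$ at once. If some $E\in\Copt$ satisfied $\Comp D\ssnq E\ssnq C$, then $\Comp E\in\Copt$ by the closure fact and $\Comp C\ssnq\Comp E\ssnq D$, contradicting that nothing of $\Copt$ lies strictly between $\Comp C$ and $D$. Hence $C$ covers $\Comp D$ and $D\sim C$.

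For transitivity, assume $C\sim D$ and $D\sim F$; if two of the three coincide the conclusion is trivial, so I take them pairwise distinct. Using the given arcs together with symmetry I have $\Comp C\ssnq D$, $\Comp D\ssnq C$, $\Comp D\ssnq F$ and $\Comp F\ssnq D$, all four coverings. In particular $\Comp D\sse C\cap F$, so $C\cap F\neq\es$. By \prettyref{prop:opt_nested} $C$ and $F$ are nested, and $C\cap F\neq\es$ rules out $C\sse\Comp F$; the options $C\ssnq F$ and $F\ssnq C$ are each excluded because they would place $C$ strictly between $\Comp D$ and $F$, respectively $F$ strictly between $\Comp D$ and $C$, contradicting one of the covering hypotheses. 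The only surviving nestedness is $\Comp C\sse F$, and $\Comp C=F$ is impossible (it would force $D=V(\GG)$), so $\Comp C\ssnq F$.

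It remains to prove the covering condition for $C\sim F$, which I expect to be the main obstacle, since nestedness alone does not suffice and one must repeatedly play the minimality hypotheses of the two given relations against a hypothetical intermediate cut. Suppose toward a contradiction that $E\in\Copt$ satisfies $\Comp C\ssnq E\ssnq F$. From $\Comp C\sse E\cap D$ one gets $E\cap D\neq\es$, so \prettyref{prop:opt_nested} leaves only the possibilities $\Comp D\sse E$, $E\sse D$, or $D\sse E$. In the first case $\Comp D\ssnq E\ssnq F$ contradicts that $F$ covers $\Comp D$; in the second case the covering of $\Comp C$ by $D$ forces $E=D$, whence $D\ssnq F$ together with $\Comp D\ssnq F$ gives the impossible $F=V(\GG)$; the third case likewise yields $D\ssnq F$ and the same contradiction. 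Thus no such $E$ exists, $F$ covers $\Comp C$, and $C\sim F$, completing the argument.
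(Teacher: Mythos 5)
Your proof is correct in substance and follows essentially the same route as the paper's: reflexivity and symmetry via closure of $\Copt$ under complementation, and transitivity by playing the pairwise nestedness from \prettyref{prop:opt_nested} against the two covering hypotheses. This is the paper's own case analysis in a slightly different arrangement; the paper does not assume the three cuts are pairwise distinct, and instead lets the cases $C\sse F$ and $F\sse C$ terminate with $C=F$ (hence $C\sim F$ trivially) rather than excluding them, but the individual arguments coincide.

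One step needs a patch. In the final case analysis you enumerate the possibilities for the hypothetical intermediate cut $E$ as $\Comp{D}\sse E$, $E\sse D$, or $D\sse E$, but in the first case you then write $\Comp{D}\ssnq E\ssnq F$, silently promoting the inclusion $\Comp{D}\sse E$ to a strict one. The subcase $E=\Comp{D}$ is not covered by any of your three cases (it satisfies neither $E\sse D$ nor $D\sse E$), and the covering hypothesis for $D\sim F$ says nothing about it. It cannot occur, but that needs an argument: if $E=\Comp{D}$, then $\Comp{C}\ssnq E$ gives $D\ssnq C$, which together with $\Comp{D}\ssnq C$ (which you already derived from symmetry) forces $V(\GG)\sse C$, contradicting $\Comp{C}\neq\es$. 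With this one line added, the proof is complete.
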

\begin{proof}
Reflexivity and symmetry are immediate. Transitivity requires to  check all inclusions how the cuts can be nested. A proof can be found e.g.{} in \cite{Dunwoody79}. In order to keep the paper self-contained we repeat the  proof for transitivity. Let  $C\sim D\neq C$ and  $D\sim E \neq D$.
This implies $\es \neq \Comp D \sse C\cap E$. 
We have to show that $C\sim E$. The  cuts $C$ and $E$ are nested due to 
\prettyref{prop:opt_nested}. Hence we have one of the following four
inclusions: 
\begin{itemize}
\item $C\sse E$: This implies $\Comp{D} \ssnq C \sse E$. Hence, $C=E$ because $D\sim E$.
\item $E\sse C$: This implies  $\Comp{D} \ssnq E \sse C$, Hence, $C=E$ because $D\sim C$.
\item $E\sse \Comp{C}$: This contradicts  $C\cap E\neq \emptyset$.
\item $\Comp{C} \sse E$: Since  $C\cap E\neq \emptyset$, we see $\Comp{C} \ssnq E$. Now, let  $\Comp{C} \ssnq F\sse E$ for some $F \in \Copt$. 
Since $F$ and D are nested, we obtain one of the following inclusions:
\begin{itemize}
\item $D\sse F$: This implies $D \sse  E$, in contradiction to  $\Comp{D} \ssnq E$. %und $E\neq V(\Gamma)$
\item $F\ssnq D$: This implies  $\Comp{C} \ssnq F \ssnq D$, in contradiction to  $C\sim D$. 
\item $F\sse \Comp{D}$: This implies  $\Comp{C} \ssnq F \sse \Comp{D}$, in contradiction to   $\Comp{C} \ssnq D$.
\item $\Comp{D} \ssnq F$: This implies  $\Comp{D} \ssnq F \sse E$. Hence, $F=E$ because $D\sim E$.
\end{itemize}
\end{itemize}
\end{proof}

\begin{dfn}
Let  $T(\Copt)$ denote the following graph:
\begin{align*}
V(T(\Copt))&=\set{[C]}{C \in \Copt}\\
E(T(\Copt))&=\set{(C,\Comp{C})}{C \in \Copt}
\end{align*}
The  incidence maps are defined by $s((C,\Comp{C}))=[C]$ and $t((C,\Comp{C}))=[\Comp{C}]$.
\end{dfn}
The directed edges are in canonical bijection with the pairs $([C],[\Comp{C}])$. Indeed, let  $C\sim D$ and $\Comp{C}\sim \Comp{D}$. It follows $C=D$ because otherwise $C\ssnq \Comp{D}\ssnq C$.
Thus, $T(\Copt)$ is an undirected graph without self-loops and multi-edges. 

The graph  $T(\Copt)$ is locally finite \IFF the equivalence classes $[C]$ are finite. Hence it is not locally finite, in general: For each $C \in \Copt$  there might be infinitely many $D \in \Copt$ with $D \sim C$. For example, consider the Cayley graph of the group $(\Z\times \Z)  *\Z/2\Z$. There is one $\Z\times \Z$-plane through the origin and for every $(i,j) \in \Z\times \Z$ there is one edge leaving this plane. Removing this edge  defines a 
 unique $1$-cut  $C_{i,j}$ with $ \Z\times \Z\sse C_{i,j}$. It is in $\Copt$ because all other minimal cuts are nested 
with $C_{i,j}$. We have  $\Comp{C_{i,j}} \sse C_{0,0}$ for $(i,j) \neq (0,0)$, but there is no $E \in \Copt$  with $\Comp{C_{i,j}} \subsetneqq E \subsetneqq C_{0,0}$.

\begin{proposition}\label{prop:tree}
The graph $T(\Copt)$ is a tree.
\end{proposition}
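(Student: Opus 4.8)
The plan is to show separately that $T(\Copt)$ is connected and acyclic; since the remark preceding the proposition already established that $T(\Copt)$ has no self-loops and no multi-edges, this suffices to conclude it is a tree. The fact I would isolate first is a clean description of adjacency: for $E,F\in\Copt$, the relation $\Comp{E}\sim F$ holds exactly when $F$ covers $E$ in $(\Copt,\sse)$, i.e.\ $E\ssnq F$ with no cut strictly between them. This is immediate from \prettyref{dfn:cuts_equiv}, since $\Comp{\Comp{E}}=E$. Consequently the edge $(E,\Comp{E})$ joins $[E]$ to $[\Comp E]$, and $[\Comp E]=[F]$ for every $F$ covering $E$.

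For connectivity, take $[C],[D]$ with $C,D\in\Copt$. By \prettyref{prop:opt_nested} $C$ and $D$ are nested, so after possibly replacing $C$ by $\Comp C$ and $D$ by $\Comp D$ (each joined to the original class by the single edge $(C,\Comp C)$ resp.\ $(D,\Comp D)$) I may assume $C\sse D$. If $C=D$ there is nothing to do; otherwise $C\ssnq D$ and by \prettyref{lm:tree_set} the interval $\set{E\in\Copt}{C\sse E\sse D}$ is finite, so it contains a maximal chain $C=E_0\ssnq E_1\ssnq\cdots\ssnq E_m=D$ in which each $E_{j+1}$ covers $E_j$. By the adjacency fact the edge $(E_j,\Comp{E_j})$ joins $[E_j]$ to $[\Comp{E_j}]=[E_{j+1}]$, so $[E_0],[E_1],\dots,[E_m]$ is a path from $[C]$ to $[D]$.

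The heart of the argument, and the step I expect to be the main obstacle, is acyclicity. I would analyse a reduced walk $[v_0],[v_1],\dots,[v_n]$ by orienting its $i$-th edge so that $[v_{i-1}]=[C_i]$ and $[v_i]=[\Comp{C_i}]$. At the vertex $[v_i]$ we then have $\Comp{C_i}\sim C_{i+1}$; since a reduced walk never retraces an edge, the case $\Comp{C_i}=C_{i+1}$ (which would make $e_{i+1}$ the same undirected edge as $e_i$) is excluded, so the adjacency fact forces $C_i\ssnq C_{i+1}$. Thus along any reduced walk the source-cuts satisfy $C_1\ssnq C_2\ssnq\cdots\ssnq C_n$, a strictly increasing chain. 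Now a cycle of length $n\ge 3$ is such a reduced walk that closes up, $[v_n]=[v_0]$, giving $[\Comp{C_n}]=[C_1]$ and hence $\Comp{C_n}\sim C_1$; the two possibilities $\Comp{C_n}=C_1$ and $C_n\ssnq C_1$ both contradict $C_1\ssnq C_n$ (the first because $C_1\sse C_n$ together with $C_1\cap C_n=\es$ would force $C_1=\es$). This contradiction shows there is no cycle.

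Care is needed in the acyclicity step to justify that a simple cycle is genuinely reduced in the required sense---namely that consecutive edges, including the wrap-around pair $e_n,e_1$, are distinct undirected edges so that no backtracking occurs---and to keep the orientation bookkeeping consistent; the inclusions themselves then fall out of \prettyref{dfn:cuts_equiv} with no further geometry. Connectivity and acyclicity together yield that $T(\Copt)$ is a tree.
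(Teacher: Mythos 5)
Your proposal is correct and takes essentially the same route as the paper's proof: connectivity by refining the nested pair $C\sse D$ into a non-refinable (covering) chain inside the finite interval provided by \prettyref{lm:tree_set}, and acyclicity by orienting the edges along a path so that the source cuts form a strictly increasing chain $C_1\ssnq\cdots\ssnq C_n$ that cannot close up. One minor caveat: your ``adjacency fact'' should read ``$\Comp{E}\sim F$ holds \IFF $F=\Comp{E}$ or $F$ covers $E$'' (the reflexive case is missing from your ``exactly when''), but since you explicitly exclude the equality case before invoking the forward direction, and use only the backward direction elsewhere, the argument is unaffected.
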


\begin{proof}
Let $\gamma$ be a simple path in $T(\Copt)$ of length at least two.
Then $\gamma$ corresponds to a sequence of cuts
\[C_0,\, \Comp{C}_0 \sim C_1, \dots,  \Comp{C}_{n-2} \sim C_{n-1},\,\Comp{C}_{n-1}= C_n\]
with $[C_{i-1}] \neq [C_{i+1}]$, so in particular $\Comp{C}_{i-1} \neq C_i$ for $1\leq i\leq n-1$ (otherwise we would have $C_{i-1}= \Comp{C}_i \sim  C_{i+1}$).
So we get a sequence
\[C_0\subsetneqq C_1\subsetneqq C_2\ssnq\, \cdots  \, \subsetneqq C_{n-1}\]
Therefore we have $C_0 \neq \Comp{C}_{n-1}$ and $\Comp{C}_0 \not\sse\Comp{C}_{n-1}$. So $C_0 \not\sim \Comp{C}_{n-1}=C_n$ and the original path is not a cycle. So $T(\Copt)$ has no cycles.

It remains to show that $T(\Copt)$ is connected.
Let $[C],[D] \in V(T(\Copt))$. Since $C$ and $D$ are nested and $(C,\Comp{C}),(D,\Comp{D})\in E(T(\Copt))$, we may assume $C\sse D$. 
By \prettyref{lm:tree_set}, there are only finitely many cuts $E \in \Copt$, with $C\sse E\sse D$. Now, let $C_0, C_1,\dots,  C_n$ be a not refinable sequence of cuts in $\Copt$ such that
\[C=C_0\subsetneqq C_1\subsetneqq C_2 \ssnq\, \cdots  \, \subsetneqq C_{n-1} \subsetneqq C_n=D\]
Then we obtain a path from $C$ to $D$:
\[C=C_0,\, \Comp{C}_0 \sim C_1,\,  \Comp{C}_1 \sim C_2,\dots,   \Comp{C}_{n-1} \sim C_n=D\]
Hence, $T(\Copt)$ is connected and therefore a tree.
\end{proof}

\begin{remark}\label{rem:dunwoody}
 According to Dunwoody \cite{Dunwoody79} a \emph{tree set} is 
a set  of pairwise nested cuts, which is closed under complementation and 
such that for each $C,D\in\cC$ the set $\set{E \in \cC}{C\sse E\sse D}$ is finite. Thus, using this terminology, \prettyref{prop:opt_nested} and \prettyref{lm:tree_set} show that
$\Copt$ is a tree set. Once this is established \prettyref{prop:tree} becomes a general fact due to Dunwoody \cite[Thm.~2.1]{Dunwoody79}.
\end{remark}

%%%%%%%%%%%%%%%%%%%%%%%%%%%%%%%%%%%%%%%%%%
\section{Actions on $\GG$ and its structure tree $T(\Copt)$}\label{sec:blocks}
In this section, $\GG$ denotes a connected, locally finite, and accessible graph such that the group of automorphisms $\Aut(\GG)$ acts with finitely many  orbits on $\GG$. The action on $\GG$ induces an action of $\Aut(\GG)$ on $\Copt$ and on the structure tree $T(\Copt)$. 
For example, if $\GG$ is the Cayley graph of a group $G$ with respect to some finite generating set $\Sg \sse G$, then $\GG$ is connected, locally finite, 
and there is only one orbit: $\abs{\Aut(\GG)\bs \GG} = 1$. 

\begin{lemma}\label{lm:VDendl_orbits_k_cuts}
Let $\abs{\Aut(\GG)\bs \GG}$ be finite and 
$k\in \N$. Then the  canonical action of $\Aut(\GG)$ on the set of $k$-cuts has 
finitely many orbits, only. In particular $\Aut(\GG)$  acts on 
$\Copt$ and on the tree $T(\Copt)$ with finitely many orbits.
\end{lemma}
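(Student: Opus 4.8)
The plan is to reduce the statement, via translation by automorphisms, to the finiteness result already established in \prettyref{lm:endl_k_cuts}. First I would record the two elementary facts that make the action interact well with cuts: every $g \in \Aut(\GG)$ sends a $k$-cut to a $k$-cut, because it preserves connectedness and complementation and maps the edge boundary $\delta C$ bijectively onto $\delta(g\cdot C)$, so that $\abs{\delta(g\cdot C)} = \abs{\delta C}$; and it is compatible with vertex boundaries, $\beta(g\cdot C) = g\cdot \beta C$, since it preserves incidence. Moreover, as $\GG$ is connected and both $C$ and $\Comp C$ are nonempty, at least one edge crosses between them, so $\beta C \neq \es$ for every cut $C$.

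Next I would use the hypothesis. Since $\Aut(\GG)\bs \GG$ is finite, fix a finite set $S \sse V(\GG)$ meeting every vertex orbit. Given any $k$-cut $C$, choose $u \in \beta C$ and write $u = g\cdot s$ with $g \in \Aut(\GG)$ and $s \in S$. Then $g^{-1}\cdot C$ is again a $k$-cut, and $s = g^{-1}\cdot u \in \beta(g^{-1}\cdot C)$, so $\beta(g^{-1}\cdot C)\cap S \neq \es$. By \prettyref{lm:endl_k_cuts} there are only finitely many $k$-cuts $D$ with $\beta D \cap S \neq \es$; call this finite family $\mathcal{D}$. As $C = g\cdot(g^{-1}\cdot C)$ with $g^{-1}\cdot C \in \mathcal{D}$, every $k$-cut lies in the $\Aut(\GG)$-orbit of a member of $\mathcal{D}$. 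Hence the number of orbits of $k$-cuts is at most $\abs{\mathcal{D}}$, which is finite.

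For the ``in particular'' part I would first verify that $\Copt$ is $\Aut(\GG)$-invariant: an automorphism permutes the bi-infinite simple paths and preserves nestedness and weights, so it preserves the quantity $m$ and maps $\cC_{\min}(\alp)$ onto $\cC_{\min}(g\cdot\alp)$; consequently it maps $\Copt(\alp)$ onto $\Copt(g\cdot\alp)$ and permutes $\Copt$. Fixing the constant $k$ coming from \prettyref{dfn:accessible}, every cut in $\Copt$ is a $k$-cut, so $\Copt$ is a union of orbits sitting inside the finitely many orbits of $k$-cuts; thus $\Aut(\GG)$ acts on $\Copt$ with finitely many orbits. Finally, the edges of $T(\Copt)$ are in bijection with $\Copt$ via $C \mapsto (C,\Comp C)$, giving finitely many edge orbits, and every vertex $[C]$ is the source of such an edge, so there are also finitely many vertex orbits.

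The only things requiring care are the routine checks that automorphisms preserve cuts, boundaries, nestedness, and hence the whole construction of $\Copt$; none of these is a genuine obstacle. The actual content is the translation argument that moves a boundary vertex of an arbitrary $k$-cut into the fixed finite set $S$, which is exactly what allows \prettyref{lm:endl_k_cuts} to be applied.
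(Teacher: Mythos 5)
Your proposal is correct and follows essentially the same route as the paper: fix a finite set of vertex-orbit representatives, translate any $k$-cut so that its vertex boundary meets that set, apply \prettyref{lm:endl_k_cuts}, and then use the accessibility constant $k$ together with the identification of the edges of $T(\Copt)$ with $\Copt$ for the last statement. The extra verifications you include (that automorphisms preserve cuts, weights, nestedness, and hence $\Copt$) are details the paper leaves implicit, having already noted the induced action at the start of Section 4.
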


\begin{proof} Let $\Aut(\GG)\bs V(\Gamma)$ be represented by some finite vertex set $U\sse V(\GG)$.
With \prettyref{lm:endl_k_cuts} it follows that there are only finitely many $k$-cuts $C$ such that $U\cap \beta C \neq \es$. Since every cut is in the same orbit as some cut $C$ with $U\cap \beta C \neq \es$, the  group $\Aut(\GG)$ acts on the set of $k$-cuts with 
finitely many orbits.

%Clearly, $\Aut(\GG)$ acts on $\Copt$ and on the tree $T(\Copt)$, too.
Since $\Gamma$ is accessible, there is a $k$ such that for all cuts $C \in \Copt$ holds $\abs{\delta C} \leq k$. 
For the last statement observe that $\set{(C, \Comp C)}{C\in \Copt}$ is the edge set of $T(\Copt)$. Thus,  
 the action of $\Aut(\GG)$ on  $T(\Copt)$ has only finitely many orbits, too. 
\end{proof}

 For $S\sse V(\Gamma)$ and $k\geq 1$ we let $N^kS=\set{v \in V(\Gamma)}{d(v,S)\leq k}$ denote the $k$-th neighborhood of $S$. 
Now, for a cut $C$ we can choose $k$ large enough such that $N^k C\cap \Comp{C}$ is connected because $\Comp{C}$ is connected. (Indeed, all points in $\beta C \cap \Comp{C}$ can be connected 
 in $\Comp{C}$, hence for some $k$ large enough these points can be connected in  $N^k C\cap \Comp{C}$. This $k$ suffices to make $N^k C\cap \Comp{C}$
 connected.) %Finally, $\Aut(\GG)\bs \GG$ is finite, hence by accessibility and \prettyref{lm:endl_k_cuts} there are only finitely many orbits of minimal cuts (see also proof of \prettyref{lm:VDendl_orbits_k_cuts}). 
By \prettyref{lm:VDendl_orbits_k_cuts}, there are only finitely many orbits of optimal cuts.  
 Thus we can choose some 
 $\kappa\in\N$ which works for all $C\in \Copt$ %\sse \cC_{\min}$ 
 and fix it for the rest of this section. 
 
Now, we want to deduce some more information about the structure of the vertex stabilizers $G_{[C]}= \set{g \in G}{gC\sim C}$ of vertices of the tree $T(\Copt)$. Therefore, we assign to each vertex of $T(\Copt)$ a so-called \emph{block}. The definition has been taken from \cite{ThomassenW93}. In \prettyref{lm:endl_orbits_Bc} we show that the blocks are somehow ``small''. They are defined as follows.
\begin{dfn}\label{dfn:B{[C]}}
Let $\Aut(\GG)\bs \GG$ be finite and $\Copt$ be the set of optimal cuts.
 Let $\kappa\geq 1$ be defined as above such that $N^\kappa C\cap \Comp{C}$ is connected for all $C\in \Copt$. The \emph{block} assigned to $[C] \in V (T(\Copt))$ is defined by:
\[B{[C]}= \bigcap_{D\sim C}  N^\kappa D\]
\end{dfn}

\begin{lemma}\label{lm:VconBc}
We have   $$B{[C]} = \bigcap_{D \sim C } D \; \cup \; 
\bigcup_{D \sim C }N^\kappa D \cap \Comp{D}.$$
\end{lemma}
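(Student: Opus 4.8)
The plan is to prove the two set inclusions separately, after recording two elementary ingredients. The first is the decomposition $N^\kappa D = D \cup (N^\kappa D \cap \Comp D)$, which holds for every cut $D$ simply because $D \sse N^\kappa D$. The second, and the only place where the combinatorics of $\Copt$ enters, is the following observation about a single equivalence class: if $D$ and $D'$ are \emph{distinct} cuts with $D \sim D'$, then \prettyref{dfn:cuts_equiv} (together with the fact that $\sim$ is an equivalence relation, \prettyref{lm:frida}) gives $\Comp{D'} \ssnq D$, hence $\Comp{D'} \sse D$. Intuitively the ``small sides'' $\Comp D$ of the cuts in $[C]$ are pairwise disjoint, and each of them is swallowed by every other ``large side'' $D$.

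For the inclusion $\supseteq$ I would treat the two parts of the right-hand side in turn. A vertex in $\bigcap_{D\sim C} D$ lies in $D \sse N^\kappa D$ for every $D \sim C$, so it lies in $B{[C]}$. A vertex $x \in N^\kappa D_0 \cap \Comp{D_0}$ for some fixed $D_0 \sim C$ certainly satisfies $x \in N^\kappa D_0$; and for every other $D \sim C$ the structural fact gives $x \in \Comp{D_0} \sse D \sse N^\kappa D$. Thus $x \in \bigcap_{D\sim C} N^\kappa D = B{[C]}$, which establishes $\supseteq$.

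The inclusion $\subseteq$ is then a one-line case distinction. Given $x \in B{[C]}$, either $x \in D$ for every $D \sim C$, in which case $x \in \bigcap_{D\sim C} D$, or there is some $D_0 \sim C$ with $x \in \Comp{D_0}$; since $x \in B{[C]} \sse N^\kappa D_0$ we then get $x \in N^\kappa D_0 \cap \Comp{D_0}$. In both cases $x$ lies in the right-hand side.

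The main (and essentially the only) obstacle is the structural fact $\Comp{D'} \sse D$ for distinct members of a class; everything else is formal bookkeeping. It is worth double-checking that the containment points the right way — that it is the \emph{complement} $\Comp{D_0}$ of the cut one leaves out, rather than $D_0$ itself, that is absorbed into all the remaining large sides — since this orientation is exactly what makes the $\kappa$-collar $N^\kappa D_0 \cap \Comp{D_0}$ of one branch lie inside the neighborhoods $N^\kappa D$ of every other cut in $[C]$.
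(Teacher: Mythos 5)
Your proof is correct and takes essentially the same route as the paper: the inclusion $\subseteq$ is the same case distinction the paper dismisses as trivial, and your key chain $N^\kappa D_0 \cap \Comp{D_0} \sse \Comp{D_0} \ssnq D \sse N^\kappa D$ is exactly the paper's displayed inclusion, with the paper arguing for the representative $C$ where you take an arbitrary $D_0 \sim C$ and invoke transitivity of $\sim$. No gaps.
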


\begin{proof}
The inclusion from left to right is trivial. 
 It is therefore enough to show that we have $N^\kappa C \cap \Comp{C}\sse B{[C]}$.
 Clearly, $N^\kappa C \cap \Comp{C}\sse N^\kappa C$. Thus is enough to consider 
 $D\sim C$, $D\neq C$ and to show that $N^\kappa C \cap \Comp{C}\sse N^\kappa D$.
 This follows from: 
 \[N^\kappa C \cap \Comp{C} \sse  \Comp{C} \ssnq D \sse N^\kappa D\]
\end{proof}

\begin{example}
\prettyref{fig:schnitte_in_cayleygraph} shows a part of the Cayley graph of the free product $\Z/2\Z\,*\,\Z/3\Z = \genr{a,b}{a^2=1=b^3}$. The minimal cuts cut the edges with label $a$,  i.e., they cut through cosets of $\Z/2\Z$. The optimal cuts are exactly the minimal cuts. The three cuts depicted with dashed lines belong to the same equivalence class and the bold vertices form the respective block. Here, we can choose $\kappa=1$ for the definition of the blocks.
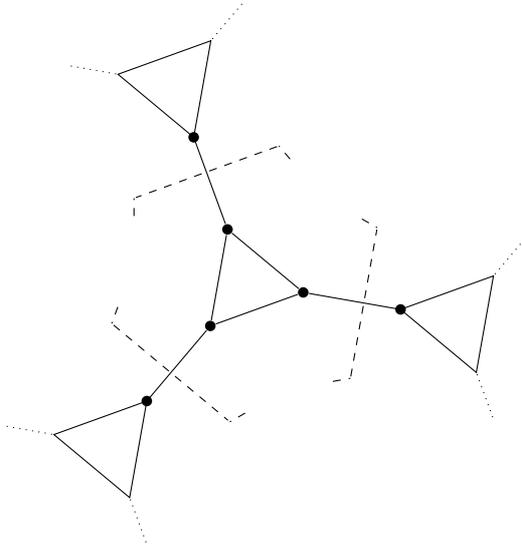
\begin{figure}[ht]
%Schnitte eines Blockes im Cayleygraph von Z/2Z * Z/3Z
\begin{center}
\begin{tikzpicture}[scale=.65, rotate=20]
\tikzstyle{every node}=[inner sep=0pt]

\node[circle, fill,inner sep=1.4pt] (1) at (0, 0) {};
\node[circle, fill,inner sep=1.4pt] (a) at (2, 0) {};
\node[circle, fill,inner sep=1.4pt] (aa) at (1,1.732050807568877294 ) {};

\node (aaba) at (2, 5.464101615137754588) {};
\node (aabaa) at (0,5.464101615137754588) {};
\draw[dotted] (aaba)-- ++(30:1);
\draw[dotted] (aabaa)-- ++(150:1);
\path (aaba) -- ++(30:2) node (aabac){};
\path (aabaa) -- ++(150:2) node (aabaac){};

\node (abaa) at (5.732050807568877294, -1) {};
\node (aba) at (4.732050807568877294, -2.732050807568877294) {};
\draw[dotted] (aba)-- ++(270:1);
\draw[dotted] (abaa)-- ++(30:1);
\path (aba) -- ++(270:2) node (abac){};
\path (abaa) -- ++(30:2) node (abaac){};

\node (ba) at (-3.732050807568877294, -1) {};
\node (baa) at (-2.732050807568877294, -2.732050807568877294) {};
\draw[dotted] (ba)-- ++(150:1);
\draw[dotted] (baa)-- ++(270:1);
\path (ba) -- ++(150:2) node (bac){};
\path (baa) -- ++(270:2) node (baac){};

\draw (aa)-- ++(90:2) node[circle, fill,inner sep=1.4pt] {}  -- ++(60:2) -- ++(180:2) -- ++(300:2){};
\draw (a)-- ++(330:2) node[circle, fill,inner sep=1.4pt] {} -- ++(300:2)-- ++(60:2) -- ++(180:2) {};
\draw (1)-- ++(210:2) node[circle, fill,inner sep=1.4pt] {} -- ++(180:2)-- ++(300:2)-- ++(60:2)  {};

\path (baac)  -- ++(45:4) node (baacd){};
\draw [style=dashed] (baacd)  -- ++(190:0.4) node (baacde){};
\path (bac)  -- ++(15:4) node (bacd){};
\draw [style=dashed] (bacd)  -- ++(230:0.4) node (bacde){};
\draw [style=dashed] (bacde)  -- (baacde){};

\path (abaac)  -- ++(165:4) node (abaacd){};
\draw [style=dashed] (abaacd)  -- ++(310:0.4) node (abaacde){};
\path (abac)  -- ++(135:4) node (abacd){};
\draw [style=dashed] (abacd)  -- ++(350:0.4) node (abacde){};
\draw [style=dashed] (abacde)  -- (abaacde){};

\path (aabaac)  -- ++(285:4) node (aabaacd){};
\draw [style=dashed] (aabaacd)  -- ++(70:0.4) node (aabaacde){};
\path (aabac)  -- ++(255:4) node (aabacd){};
\draw [style=dashed] (aabacd)  -- ++(110:0.4) node (aabacde){};
\draw [style=dashed] (aabacde)  -- (aabaacde){};

\draw (1) -- (a){};
\draw (1) -- (aa){};
\draw (aa) -- (a){};

\end{tikzpicture}
\end{center}
\caption[]{Block of six vertices in the Cayley graph of $\Z/2\Z*\Z/3\Z$}\label{fig:schnitte_in_cayleygraph}
\end{figure}
\end{example}

\begin{lemma}\label{lm:VDconBc}The following assertions hold.
\begin{enumerate}
\item\label{conBci} For all $C \in \Copt$ the block $B{[C]}$ is connected .
\item\label{conBcii} There is a number $\ell \in \N$ such that for all $C \in \Copt$ 
and all $S \sse B{[C]}$ we have: 
Whenever two vertices $u,v \in B{[C]}- N^\ell S$ can be connected by some path in $\GG- N^\ell S$, then they can be connected by some path in $B{[C]}-  S$.
\end{enumerate}
\end{lemma}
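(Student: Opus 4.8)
The plan is to base both parts on the explicit description of the block in \prettyref{lm:VconBc}, namely
$B{[C]}=\bigcap_{D\sim C}D\;\cup\;\bigcup_{D\sim C}\parenth{N^\kappa D\cap\Comp D}$,
together with one structural observation about the class $[C]$. If $D,D'\sim C$ are distinct, then by transitivity (\prettyref{lm:frida}) we have $D\sim D'$, and the definition of $\sim$ gives $\Comp D\ssnq D'$, hence $\Comp D\cap\Comp{D'}=\es$. Thus the sets $\Comp D$ for $D\sim C$ are pairwise disjoint, the collars $A_D:=N^\kappa D\cap\Comp D\sse\Comp D$ are pairwise disjoint, and every vertex of $B{[C]}$ lies either in the core $I:=\bigcap_{D\sim C}D$ or in exactly one collar $A_D$. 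Each $A_D$ is connected by the choice of $\kappa$. First I would record a uniform size bound on the collars: since $\GG$ is locally finite and $\Aut(\GG)$ acts with finitely many orbits, the degree is bounded by some $\Delta$; since every $D\in\Copt$ is a $k$-cut, $\abs{\beta D\cap\Comp D}\leq k$; and every vertex of $A_D$ lies within distance $\kappa-1$ of $\beta D\cap\Comp D$. Hence $\abs{A_D}\leq M$ for a constant $M=M(k,\kappa,\Delta)$, so each connected collar has $\diam A_D\leq M$ uniformly in $C$ and $D$.

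For the first assertion I would take $x,y\in B{[C]}$, pick any path $P$ from $x$ to $y$ in $\GG$, and reroute its excursions through the collars. Decompose $P$ into maximal runs, each lying entirely in $I$ or entirely in a single $\Comp D$ (possible since the $\Comp D$ are pairwise disjoint). The $I$-runs already lie in $B{[C]}$ and are kept. For a run inside $\Comp D$, its first vertex is adjacent to the preceding vertex of $P$, which lies in $D$ (being outside $\Comp D$); hence this first vertex, and likewise the last vertex, lies at distance $1\leq\kappa$ from $D$, i.e.\ in $A_D$. (If a run starts at $x$ or ends at $y$, that endpoint already lies in the corresponding collar because $x,y\in B{[C]}$.) Replacing each such run by a path inside the connected set $A_D$ joining its two endpoints yields a walk from $x$ to $y$ entirely inside $B{[C]}$, so $B{[C]}$ is connected.

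For the second assertion I would set $\ell:=M$ and run the identical rerouting, now avoiding $S$. Suppose $u,v\in B{[C]}-N^\ell S$ are joined by a path $P$ in $\GG-N^\ell S$. The $I$-runs of $P$ avoid $N^\ell S\supseteq S$ and lie in $B{[C]}$, so they are kept. For a run inside $\Comp D$ with endpoints $p,q\in A_D$, the fact that $P$ avoids $N^\ell S$ gives $p,q\notin N^\ell S$; since $\diam A_D\leq M=\ell$, no vertex of $A_D$ can belong to $S$, for such a vertex would lie within distance $\ell$ of $p$ and force $p\in N^\ell S$. Thus $A_D\cap S=\es$, and the rerouting path joining $p$ to $q$ inside the connected set $A_D$ avoids $S$. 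Concatenating produces a walk, hence a path, from $u$ to $v$ inside $B{[C]}-S$; crucially $\ell$ depends only on $k,\kappa,\Delta$, not on $C$ or $S$.

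The main obstacle, and the only step beyond the bookkeeping of the rerouting, is precisely the uniform bound $\diam A_D\leq M$: this is where accessibility (all optimal cuts are $k$-cuts, via \prettyref{lm:VDendl_orbits_k_cuts}), local finiteness, and orbit-finiteness combine to yield a single constant valid for all cuts. Without such a uniform bound one could not extract one $\ell$ working simultaneously for every $C$, and the collar reroutes in the second part could unavoidably meet $S$.
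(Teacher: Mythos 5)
Your proof is correct. It shares its engine with the paper's proof --- the connectivity of the collars $N^\kappa D\cap\Comp{D}$, a uniform bound on their diameters, and rerouting path segments through these collars so that a reroute whose endpoints avoid $N^\ell S$ cannot meet $S$ --- but the execution differs in two genuine ways. First, the paper obtains the uniform diameter bound (its definition of $\ell$) directly from the fact that there are only finitely many $\Aut(\GG)$-orbits of optimal cuts (\prettyref{lm:VDendl_orbits_k_cuts}); you instead derive an explicit bound $M(k,\kappa,\Delta)$ by counting, using the bounded degree, $\abs{\beta D\cap\Comp{D}}\leq k$, and the observation that every collar vertex lies within distance $\kappa-1$ of $\beta D\cap\Comp{D}$. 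Your version is slightly more self-contained and yields a concrete constant; the paper's is shorter. Second, the paper reroutes one excursion out of $B{[C]}$ at a time, by induction on the length of the path, locating the relevant cut via the first vertex $v_m\notin N^\kappa D$; you reroute all runs at once using the partition of $V(\GG)$ into the core $\bigcap_{D\sim C}D$ and the complements $\Comp{D}$, whose pairwise disjointness you correctly extract from transitivity of $\sim$ (\prettyref{lm:frida}) together with \prettyref{dfn:cuts_equiv}. That disjointness is a clean structural fact which the paper's inductive bookkeeping never needs to make explicit, and it makes your decomposition canonical rather than recursive. Both routes produce an $\ell$ depending only on the graph, not on $C$ or $S$, which is exactly the uniformity the statement requires.
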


\begin{proof}
Note that \ref{conBci}.~is a special case of \ref{conBcii}.~by choosing 
$S = \es$. Let $\ell = \max\set{d(u,v)}{D\in \Copt, \, u,v \in \Comp D\cap N^\kappa D}$. Thus $\ell$ is a uniform bound on the diameters for the sets $N^\kappa D \cap \Comp{D}$ for $D\in \Copt$. It exists because there are only finitely many orbits of optimal cuts.

Now, let  $u, v \in B{[C]}-N^\ell S$ be two vertices
which are connected by some path $\gam$ in $\GG- N^\ell S$. 
We are going to transform the path $\gam$ into some path $\gam'$
where all vertices are in $B{[C]}-  S$. If $\gam$ is entirely in $B{[C]}$ we are done. Hence we may assume that there exist
a first vertex $v_m$ of $\gamma$ which does not lie in $B{[C]}$.
Thus for some $D \sim C$ we have $v_m \notin N^\kappa D$. Since
$\kappa \geq 1$, we have $v_{m-1} \in N^\kappa D \cap \Comp{D}$. 
For some $n>m$ we find a vertex 
$v_{n}$ which is the first vertex after $v_m$ lying in $N^\kappa D$ again.
As $v_{n}$ is the first one, we have $v_{n} \in N^\kappa D \cap \Comp{D}$, too. %In particular, $v_{n} \in B{[C]}$.
%These vertices exist and moreover on the path there is a predecessor $v_{m-1}$ of $v_m$. 
 Since $N^\kappa D \cap \Comp{D}$ is connected, we can choose a path from $v_{m-1}$ to $v_{n}$ inside $N^\kappa D \cap \Comp{D}$. This is a path  inside $B{[C]}$ by \prettyref{lm:VconBc}. Note that this path does not use $v_m$ anymore. Moreover, the new segment cannot meet any point in $S$ because otherwise $v_n \in N^\ell S$. The path from $v_n \in B{[C]}- N^\ell S$ to $v$ is shorter than $\gam$. Hence, by induction, $v_n$ is connected to $v$ in $B{[C]}-  S$; and we can transform $\gam$ as desired. 
  \end{proof}

\begin{lemma}\label{lm:blocks}
Let $C \in \Copt$ and $ g \in \Aut(\GG)$ be such that $g(C) \sim C$. 
Then we have  $g(B{[C]}) = B{[C]}$.
\end{lemma}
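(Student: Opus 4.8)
The plan is to exploit the fact that the block $B{[C]}$ is defined purely in terms of the equivalence class $[C]$ together with two $\Aut(\GG)$-equivariant ingredients: the $\kappa$-neighborhood operator and the equivalence relation $\sim$. Since $g$ is a graph automorphism it preserves distances, so $g(N^\kappa D) = N^\kappa(g(D))$ for every cut $D$. Moreover $g$ maps $\Copt$ bijectively to itself, preserves set inclusion, and commutes with complementation, $g(\Comp{D}) = \Comp{g(D)}$. Reading off \prettyref{dfn:cuts_equiv}, which refers only to membership in $\Copt$, to inclusions $\ssnq$ and $\sse$, and to complements, this shows that $\sim$ is $\Aut(\GG)$-invariant, i.e.\ $D \sim C$ if and only if $g(D) \sim g(C)$.

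First I would record the equivariance of the neighborhood operator and of $\sim$ as above. Then the computation is direct:
$$g(B{[C]}) = g\Big(\bigcap_{D \sim C} N^\kappa D\Big) = \bigcap_{D \sim C} N^\kappa(g(D)).$$
The key combinatorial step is that, as $D$ ranges over the class $\set{D \in \Copt}{D \sim C}$, the image $g(D)$ ranges exactly over $\set{D' \in \Copt}{D' \sim g(C)}$. This is immediate from the invariance of $\sim$ together with the fact that $g$ is a bijection on $\Copt$: indeed $D' \sim g(C)$ holds iff $g^{-1}(D') \sim C$. Reindexing the intersection by $D' = g(D)$ therefore gives
$$g(B{[C]}) = \bigcap_{D' \sim g(C)} N^\kappa D' = B{[g(C)]}.$$

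Finally, the hypothesis $g(C) \sim C$ means, since $\sim$ is an equivalence relation by \prettyref{lm:frida}, that $g(C)$ and $C$ lie in the same class, so $[g(C)] = [C]$ and hence $B{[g(C)]} = B{[C]}$. Combining this with the previous identity yields $g(B{[C]}) = B{[C]}$, as desired.

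I do not expect a serious obstacle: the whole argument is an equivariance-and-reindexing computation. The only point requiring care is confirming that $\sim$ is genuinely preserved by $g$, which rests on $\Copt$ being an $\Aut(\GG)$-invariant set — already part of the standing assumptions of this section — and on the observation that \prettyref{dfn:cuts_equiv} is phrased entirely in terms of inclusions of optimal cuts and their complements, all of which $g$ respects.
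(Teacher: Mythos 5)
Your proof is correct and follows essentially the same route as the paper's: equivariance of the $\kappa$-neighborhood operator and of $\sim$ under automorphisms, combined with $[g(C)]=[C]$ to conclude. If anything, your reindexing of the intersection is slightly more complete, since the paper's element-wise phrasing literally establishes only the inclusion $g(B{[C]}) \sse B{[C]}$ and leaves the reverse inclusion (obtained by running the same argument for $g^{-1}$, or by your bijection argument) implicit.
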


\begin{proof}
Let $v \in B{[C]} = \bigcap \set{N^\kappa D}{D\sim C}$, then
$g v \in \bigcap\set{N^\kappa gD }{gD\sim gC}$ for all $ g \in \Aut(\GG)$.
Now, 
 if $D\sim C$ and $gC \sim C$ for some $g\in G$, then $gD \sim gC \sim C$. Hence, $gv \in B{[C]}$.
\end{proof}

\begin{lemma}\label{lm:endl_orbits_Bc}
Let $\GG$ be a connected, locally finite, and accessible graph such that a group $G$ acts on $\Gamma $ with finitely many orbits. Let $C\in \Copt$. 
Then the stabilizer $G_{[C]}= \set{g \in G}{gC\sim C}$ of the vertex 
$[C]= \set{D}{C \sim D} \in V(T(\Copt))$
acts with finitely many orbits on the block $B{[C]}$.
\end{lemma}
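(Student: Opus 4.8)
The plan is to reduce the statement to the purely combinatorial claim that \emph{every vertex of $\GG$ lies in only finitely many blocks}, and then to harvest the finiteness of orbits from the hypothesis that $G$ acts on $V(\GG)$ with finitely many orbits. First I would introduce the incidence set
\[
\mathcal{I}=\set{(v,[C'])}{C'\in\Copt,\ v\in B{[C']}},
\]
on which $G$ acts diagonally; this is well defined because $gB{[C']}=B{[gC']}$ for every $g\in G$ (as $g$ is an automorphism, $gN^\kappa D=N^\kappa(gD)$, and $D\sim C'$ iff $gD\sim gC'$). The map $G_{[C]}v\mapsto G(v,[C])$ then injects the $G_{[C]}$--orbits on $B{[C]}$ into the $G$--orbits on $\mathcal{I}$: if $(gv,g[C])=(w,[C])$, then $g\in G_{[C]}$ and $w=gv$. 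So it is enough to bound the number of $G$--orbits on $\mathcal{I}$.

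To bound that number I would fix representatives $u_1\lds u_n$ of the finitely many $G$--orbits on $V(\GG)$. Every pair of $\mathcal{I}$ is $G$--equivalent to one whose first coordinate is some $u_i$, so the number of $G$--orbits on $\mathcal{I}$ is at most $\sum_i\abs{\mathcal{B}_{u_i}}$, where $\mathcal{B}_u=\set{[C']\in V(T(\Copt))}{u\in B{[C']}}$. Hence everything comes down to showing that each $\mathcal{B}_u$ is finite. Here I would split the condition $u\in B{[C']}$ by \prettyref{lm:VconBc}: either $u\in N^\kappa D\cap\Comp D$ for some $D\sim C'$ (the \emph{collar} case), or $u\in\bigcap_{D\sim C'}D$ (the \emph{core} case). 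In the collar case $D$ is a $k$--cut with $u\in\Comp D$ and $d(u,D)\le\kappa$, so the first boundary vertex on a shortest path from $u$ into $D$ lies in $\beta D\cap N^\kappa\oneset{u}$; by \prettyref{lm:endl_k_cuts} applied to the finite set $S=N^\kappa\oneset{u}$ there are only finitely many such cuts $D$, and so only finitely many classes $[C']=[D]$ arise this way.

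The decisive point is that \emph{$u$ can lie in the core of at most one class}. I would argue this directly from the pairwise nestedness of optimal cuts. Suppose $u\in\bigcap_{D\sim C}D$ and $u\in\bigcap_{D\sim E}D$ with $[C]\neq[E]$, and let $[C]=[X_0],[X_1]\lds[X_n]=[E]$ be the geodesic in $T(\Copt)$. Writing its edges as cuts $R_i$ with $[R_i]=[X_{i-1}]$ and $[\Comp{R_i}]=[X_i]$, one has $\Comp{R_i}\sim R_{i+1}$ (both lie in $[X_i]$), and since the geodesic does not backtrack these are distinct, so \prettyref{dfn:cuts_equiv} gives $R_i\ssnq R_{i+1}$; thus $R_1\ssnq\dots\ssnq R_n$. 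As $R_1\sim C$ and $\Comp{R_n}\sim E$, the two core hypotheses force $u\in R_1$ and $u\in\Comp{R_n}$; but $R_1\sse R_n$ then gives $u\in R_n\cap\Comp{R_n}=\es$, a contradiction. Consequently $\mathcal{B}_u$ is finite, which finishes the argument.

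I expect the core--uniqueness to be the only genuinely delicate step: it is exactly where the global tree structure (\prettyref{prop:opt_nested} and the description of $\sim$) enters, in the form of the strictly increasing chain of cuts along a geodesic. The collar bound is a direct application of \prettyref{lm:endl_k_cuts}, and the passage from ``finitely many blocks through each vertex'' to ``finitely many $G_{[C]}$--orbits on $B{[C]}$'' is a routine orbit count; note in particular that this proof does not need the stabilizers $G_v$ to be finite, only that $G$ has finitely many orbits on $V(\GG)$.
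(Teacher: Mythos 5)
Your proposal is correct, and it takes a genuinely different route from the paper's. The paper argues from within the class $[C]$: via \prettyref{lm:blocks} it notes that $G_{[C]}$ acts with finitely many orbits on $[C]$, hence on $\bigcup_{D\sim C}\beta D$, and then shows every vertex of $B{[C]}$ lies within a uniformly bounded distance $m$ of this union --- distance $\leq\kappa$ in your collar case, while in the core case it picks orbit representatives $U$ with $B{[C]}\sse G\cdot U$, so that a core vertex $v$ satisfies $d(v,\beta(gC))\leq m$ for some translate $gC$, and a nestedness case analysis (comparing $gC$ with $C$, and using \prettyref{lm:tree_set} to produce $D\sim C$ with $\Comp{gC}\sse\Comp{D}\ssnq C$) shows $v$ is in fact close to $\beta D$ for some $D\sim C$; local finiteness then finishes. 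You instead prove the global statement that every vertex of $\GG$ lies in only finitely many blocks, and transfer the finitely many $G$-orbits on $V(\GG)$ to finitely many $G_{[C]}$-orbits on $B{[C]}$ through the incidence set $\mathcal{I}$ --- a clean, routine orbit count. Both proofs branch on the dichotomy of \prettyref{lm:VconBc}, but the contents differ: your collar count is a direct application of \prettyref{lm:endl_k_cuts} to $S=N^\kappa\oneset{u}$, and your core-uniqueness claim (cores of distinct classes are disjoint, proved by the strictly increasing chain $R_1\ssnq\dots\ssnq R_n$ of cuts along a tree geodesic, the same mechanism as in the proof of \prettyref{prop:tree}) replaces the paper's translate-plus-nestedness analysis; I checked the details (non-backtracking gives $\Comp{R_i}\neq R_{i+1}$, hence $R_i\ssnq R_{i+1}$ by \prettyref{dfn:cuts_equiv}, and $u\in R_1\sse R_n$ contradicts $u\in\Comp{R_n}$) and they are sound, including the degenerate case $n=1$. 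What your route buys: the auxiliary fact that each vertex lies in only finitely many blocks is of independent interest --- it is precisely condition (1) of \prettyref{lm:otto} for the tree decomposition formed by $T(\Copt)$ together with the blocks, mentioned after \prettyref{cor:nixstall}. What the paper's route buys: an explicit quantitative statement ($B{[C]}$ lies in the $m$-neighborhood of $\bigcup_{D\sim C}\beta D$), which is closer in spirit to how blocks are exploited later, e.g.{} in \prettyref{thm:new2}. (Your closing remark that finiteness of vertex stabilizers is not needed is accurate, but this is shared by the paper's proof, whose hypotheses likewise omit it.)
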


\begin{proof}
Since $G$ acts with finitely many orbits on $\Gamma$, it acts with finitely many orbits on the set $\Copt$. For $D \sim gD \sim C$ we have $g \in G_{[C]}$ by \prettyref{lm:blocks}. Hence, 
$G_{[C]}$ acts with finitely many orbits on $[C]$. This implies that
$G_{[C]}$ acts with finitely many orbits on the union $\bigcup\set{\beta D}{D \sim C}$.

We are going to show that there is some $m\in \N$ such that for every $v \in B{[C]}$  there is a cut $D\in [C]$ with $d(v,\beta D)\leq m$. This implies the lemma since $\Gamma$ is locally finite.

Let $v \in B{[C]}$. If $v \in N^\kappa D\cap \Comp{D}$ for some $D \sim C$, then we have $d(v,\beta D) \leq \kappa$ (recall that $\kappa$ is a fixed constant). Thus it remains to consider the case $v \in D$ for all $D \sim C$.

Let $U$ be a finite subset of $B{[C]}$ such that $B{[C]} \sse G\cdot U$.
There is a constant $m \geq \kappa$ such that $d(u, \beta C) \leq m$ for  $u\in U$. We conclude that for the node
$v \in B{[C]}$ there is some $g\in G$ and $E = gC$ such that  $d(v,\beta E ) \leq m$.
Thus, we actually may assume $v \in \beta E$ and show that this implies $v \in \bigcup\limits_{D\sim C} \beta D$.

Because $C$ and $E$ are nested, we can assume (after replacing $E$ with $\Comp{E}$ if necessary) that $C\sse \Comp{E}$ or $\Comp{E}\ssnq C$. If $C\sse \Comp{E}$ (thus $E\sse \Comp{C}$), then $\beta E\sse \beta \Comp{C} \cup \Comp{C}$. But $v \in C$, hence
$v \in  \beta \Comp{C} = \beta C$. 
On the other hand, if $\Comp{E} \subsetneqq C$, then there is a $D\sim C$ such that $\Comp{E} \sse \Comp{D} \subsetneqq C$. It follows that $v \in D \cap \beta \Comp{E}  \sse D \cap ( \beta \Comp{D} \cup \Comp{D}) \sse \beta D$.
\end{proof}

A graph $\GG$ is said to have \emph{more than one end} if there is a finite set
$S \sse V(\GG)$ such that $\GG-S$ has at least two infinite connected components. 
Otherwise, it has at most \emph{one end}. %; and it has \emph{no end} if $\GG$ is finite. 
Since we only consider connected and locally finite graphs, it follows that
$\GG$ has more than one end if and only if there exists a bi-infinite simple path $\alp$ such that 
$\cC(\alp) \neq \es$. 
%Note that this observation does not hold for graphs which are not locally finite or not connected. (In the case of non-locally finite graphs there are even several non-equivalent ways to define ends.)

The key property of blocks is that blocks cannot have more than one end: 
\begin{proposition}\label{prop:enden_in_Bc}
For $C\in \Copt$ the block $B{[C]}$ has at most one end. 
\end{proposition}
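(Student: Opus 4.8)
The plan is to argue by contradiction: I assume that $B{[C]}$ has more than one end and I produce an optimal cut that is forced to be empty. The starting observation is elementary but crucial. Any bi-infinite simple path $\alp$ that lies entirely inside $B{[C]}$ is ``mostly trapped'' on the $D$-side of every cut $D\sim C$. Indeed, by definition $B{[C]}=\bigcap_{D\sim C}N^\kappa D$, so $\alp\cap\Comp D\sse N^\kappa D\cap\Comp D$; the latter set is connected (by the choice of $\kappa$) and has diameter at most the uniform bound $\ell$ introduced in the proof of \prettyref{lm:VDconBc}, hence is finite in the locally finite graph $\GG$. Thus $\abs{\alp\cap\Comp D}<\infty$ and $\abs{\alp\cap D}=\infty$ for every $D\sim C$; in particular no cut of the form $D$ or $\Comp D$ with $D\sim C$ lies in $\cC(\alp)$.

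The next step, which I expect to be the main obstacle, is to pass from an end of the \emph{block} to a genuine optimal cut of the \emph{ambient} graph $\GG$. If $B{[C]}$ has more than one end, there is a finite set $S\sse B{[C]}$ such that $B{[C]}-S$ has two infinite components $B_1,B_2$; I choose rays $r_1\sse B_1$, $r_2\sse B_2$ and join them inside the connected graph $B{[C]}$ to a bi-infinite simple path $\alp\sse B{[C]}$ with tails $r_1$ and $r_2$. The difficulty is that the two tails, although separated inside $B{[C]}$, could a priori be reconnected \emph{outside} the block, so that $\alp$ would have only one end in $\GG$. This is exactly what part~\ref{conBcii} of \prettyref{lm:VDconBc} rules out: were the two tails joinable in $\GG-N^\ell S$, then choosing $u\in r_1$ and $v\in r_2$ far enough out (so $u,v\notin N^\ell S$) they would already be joinable in $B{[C]}-S$, contradicting $B_1\neq B_2$. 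Hence $N^\ell S$ separates the two rays in $\GG$, the two halves of $\alp$ lie in distinct ends of $\GG$, so $\cC(\alp)\neq\es$. By accessibility there is an optimal cut $E\in\Copt(\alp)$ with $\abs{\alp\cap E}=\infty=\abs{\alp\cap\Comp E}$.

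The remaining combinatorial endgame now squeezes $E$ to nothing. For every $D\sim C$ the cuts $E$ and $D$ are nested by \prettyref{prop:opt_nested}, and the two inclusions $E\sse\Comp D$ and $D\sse E$ are impossible, since the first forces $\abs{\alp\cap E}<\infty$ and the second forces $\abs{\alp\cap\Comp E}<\infty$ by the trapping observation. So for each $D\sim C$ we have $E\sse D$ or $\Comp D\sse E$. Taking $D=C$ and replacing $E$ by $\Comp E$ if necessary, I may assume $E\sse C$; moreover $E\neq C$ because $C$ does not split $\alp$, so $E\ssnq C$. For $D\sim C$ with $D\neq C$ the alternative $\Comp D\sse E$ would give $\Comp D\ssnq E\ssnq C$ (strictness since $\Comp D$ does not split $\alp$, so $E\neq\Comp D$), exhibiting an optimal cut strictly between $\Comp D$ and $C$ and contradicting $C\sim D$ via \prettyref{dfn:cuts_equiv}. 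Hence $E\sse D$ for \emph{all} $D\sim C$. Finally, since $E\ssnq C$ and the interval $\set{G\in\Copt}{E\sse G\sse C}$ is finite by \prettyref{lm:tree_set}, I pick $F\in\Copt$ maximal with $E\sse F\ssnq C$; maximality means that no optimal cut lies strictly between $F$ and $C$, which by \prettyref{dfn:cuts_equiv} is precisely the relation $C\sim\Comp F$. But then $E\sse\Comp F$ (since $\Comp F\sim C$) together with $E\sse F$ yields $E\sse F\cap\Comp F=\es$, contradicting that $E$ is a cut. Therefore $B{[C]}$ has at most one end.
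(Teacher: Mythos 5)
Your proof is correct and follows essentially the same route as the paper's: the trapping observation that $\abs{\alp \cap \Comp D} < \infty$ for every $D \sim C$, the lift of a block-separating set $S$ to a genuine cut of $\GG$ splitting $\alp$ via \prettyref{lm:VDconBc}, nestedness of $E$ with the cuts in $[C]$ from \prettyref{prop:opt_nested}, and \prettyref{lm:tree_set} to extract a Hasse-neighbour of $C$ above $E$. The only difference is cosmetic bookkeeping at the end: where you establish $E \sse D$ for all $D \sim C$ and conclude $E \sse F \cap \Comp F = \es$, the paper finds one neighbour $D \sim C$ with $E \sse \Comp D \ssnq C$ and applies the trapping observation once more to get $\abs{\alp \cap E} < \infty$ directly.
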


\begin{proof}
Assume by contradiction that $B{[C]}$ has more than one end.  By  \prettyref{lm:VDconBc} $B{[C]}$ is connected, hence there is a bi-infinite 
simple path $\alp$ and a finite subset $S\sse B{[C]}$ such that 
two different connected components of $B{[C]} -S$  contain 
infinitely many elements of $\alp$. However, for all $D \sim C$ we have  $\alp\sse B{[C]} \sse N^\kappa D$ and $N^\kappa D \cap \Comp{D}$ is finite. Hence for all 
$D \sim C$ almost all nodes of $\alp$ are in $D$
and $\abs{\alp \cap \Comp{D}} < \infty$.  

By \prettyref{lm:VDconBc},
there are two different connected components of $\GG- N^\ell S$ 
containing each infinitely many elements of $\alp$. Thus, the set $\cC(\alp)$ is not empty, hence there is an optimal cut $E \in \Copt(\alp)$.
This means $\abs{\alp \cap E} = \infty = \abs{\alp \cap \Comp{E}}$.
The cuts $C$ and $E$ are nested. We cannot have 
$E \sse \Comp{C}$ or $\Comp{E} \sse \Comp{C}$ because $\abs{\alpha \cap \Comp{C}}<\infty$.
Hence, by symmetry $E \subsetneqq C$. 
By \prettyref{lm:tree_set}, there is some $D\in[C]$ such that $E \sse \Comp{D}\subsetneqq C$. But we have just seen that almost all nodes of $\alp$ belong to $D$. Thus, $\abs{\alp \cap E} < \infty$. This is a contradiction. 
\end{proof}

%%%%%%%%%%%%%%%%%%%%%%%
%%%%%%%%%%%%%%%%%%%%%%%

\subsection{Actions on accessible graphs}\label{sec:hugo}

In this section  $\cG$ denotes  a class of groups which is closed under taking  normal subgroups of finite index. In our application  $\cG$ will be the class of all finite groups. But actually many other classes of groups are closed under taking finite-index normal subgroups as e.g.~the class of f.g.~virtually
free groups or e.g.~the class of finitely presented groups.

\begin{proposition}\label{prop:lea}
Let $\GG$ is a connected, locally finite, and  accessible graph such that 
$\Aut(\GG)\bs \GG$ is finite and  
let $G$ be a group acting on  $\Gamma$ such that all vertex stabilizers $G_v = \set{g\in G}{gv = v}$ belong to the 
class $\cG$. Then we have: 
\begin{enumerate}
\item The group $G$ acts with virtually $\cG$ edge stabilizers on the tree $T(\Copt)$.
\item If $B{[C]}$ is  finite for all  $C\in \Copt$, then
$G$ acts with virtually $\cG$ vertex stabilizers on the tree $T(\Copt)$.
\end{enumerate}
\end{proposition}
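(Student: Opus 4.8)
The plan is to handle both parts with the same device. In each case the relevant stabilizer acts on an explicit \emph{finite} set of vertices — the boundary $\beta C$ in part~1 and the block $B{[C]}$ in part~2 — and the goal is to exhibit a single subgroup $P$ that lies in $\cG$ \emph{and} has finite index in the stabilizer. The delicate point is that $\cG$ is assumed closed only under finite-index \emph{normal} subgroups, not under arbitrary subgroups, so one may not simply declare a subgroup of some $G_v\in\cG$ to be again in $\cG$. The way around this is to manufacture $P$ as the kernel of the action of a vertex stabilizer on a large finite ball: by construction $P$ is a finite-index normal subgroup of $G_{v_0}$ and therefore lands in $\cG$, while a separate combinatorial argument places it with finite index inside the stabilizer one cares about. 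Throughout I would use one fact: if $g\in G$ fixes every vertex of $\beta C$, then $gC=C$. Indeed, as $\GG$ has no multi-edges, fixing both endpoints of an edge fixes that edge, so $g$ fixes $\delta C$ edgewise; since $\GG-\delta C$ consists of exactly the two components $C$ and $\Comp{C}$, the automorphism $g$ permutes $\os{C,\Comp{C}}$, and because it fixes the vertices of the nonempty set $\beta C\cap C$ it cannot swap them, forcing $gC=C$.

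For part~1, the stabilizer of the edge of $T(\Copt)$ determined by $C$ is $G_C=\set{g\in G}{gC=C}$ (the undirected-edge stabilizer contains $G_C$ with index at most $2$, so it suffices to treat $G_C$). Fix $v_0\in\beta C$ and choose $R$ with $\beta C\sse N^R\os{v_0}$, possible since $\abs{\beta C}\le 2k$ and $\GG$ is locally finite and connected. Put $P=\Ker\bigl(G_{v_0}\to\Sym{N^R\os{v_0}}\bigr)$; as $G_{v_0}$ preserves distances from $v_0$ it preserves this finite ball, so $P$ is finite-index normal in $G_{v_0}\in\cG$, whence $P\in\cG$. Every $g\in P$ fixes $\beta C$ pointwise, so $P\le G_C$ by the fact above, and $P$ lies in the kernel $K$ of $G_C\to\Sym{\beta C}$. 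Since $[G_C:K]\le\abs{\beta C}!<\infty$ and, because $K\le G_{v_0}$, also $[K:P]\le[G_{v_0}:P]<\infty$, we obtain $[G_C:P]<\infty$ with $P\in\cG$; thus $G_C$ is virtually $\cG$.

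For part~2, the vertex stabilizer $G_{[C]}=\set{g}{gC\sim C}$ acts on the block $B{[C]}$ by \prettyref{lm:blocks}. Assuming $B{[C]}$ finite, pick $v_0\in B{[C]}$ (the block is nonempty, since it contains $\beta C\cap\Comp{C}$ by \prettyref{lm:VconBc}) and choose $R$ with $\beta C\cup B{[C]}\sse N^R\os{v_0}$. Taking $P=\Ker\bigl(G_{v_0}\to\Sym{N^R\os{v_0}}\bigr)\in\cG$ as before, each $g\in P$ fixes $\beta C$ and hence $gC=C$, so $P\le G_C\le G_{[C]}$; moreover $P$ fixes $B{[C]}$ pointwise, so $P$ lies in the kernel $K$ of $G_{[C]}\to\Sym{B{[C]}}$. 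Here $K\le G_{v_0}$ because $v_0\in B{[C]}$, so $[K:P]\le[G_{v_0}:P]<\infty$, and $[G_{[C]}:K]\le\abs{B{[C]}}!<\infty$; therefore $[G_{[C]}:P]<\infty$ and $G_{[C]}$ is virtually $\cG$.

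The crux — and the step I expect to need the most care — is the compatibility of the two finiteness statements about $P$: the ball-kernel is \emph{designed} to be finite-index normal in $G_{v_0}$ (so that the closure hypothesis on $\cG$ applies), and one must verify that this very group sits with finite index inside the stabilizer one actually wants. This rests entirely on the no-multi-edge reduction $g\vert_{\beta C}=\id\Rightarrow gC=C$ together with choosing the radius $R$ large enough to absorb both $\beta C$ and the block; everything remaining is bookkeeping of subgroup indices.
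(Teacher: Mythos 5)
Your proof is correct and takes essentially the same route as the paper's: both manufacture the witness subgroup as the kernel of the action of a vertex stabilizer $G_{v_0}$ on a sufficiently large finite ball (finite-index \emph{normal} in $G_{v_0}$, hence in $\cG$), and then place it with finite index inside the edge resp.\ vertex stabilizer by index bookkeeping over $\beta C$ resp.\ the finite block $B{[C]}$. The only difference is that you make explicit the step that fixing $\beta C$ pointwise forces $gC=C$, which the paper's proof uses implicitly.
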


\begin{proof}
First, let $\es \neq U\sse V(\GG)$ be any finite set. 
The action of $G$ induces a \homo from the stabilizer
$G_U = \set{g \in G}{gU \sse U}$ to 
the finite group of permutations on $U$. Its kernel is  $\bigcap_{u\in U} G_u$. %contained in $G_u$ for every $u \in U$.

Now fix one vertex $v\in U$. Then for every $k\in \N$ an element $g\in G_v$ defines a permutation on the set of vertices $\set{u\in V(\Gamma)}{d(v,u)\leq k}$. Choose $k$ large enough such that $U\sse  \set{u\in V(\Gamma)}{d(v,u)\leq k}$ and let $N$ be the kernel of the map $G_v \longrightarrow \Sym{\set{u\in V(\Gamma)}{d(v,u)\leq k}}$. Then $N$ has finite index in $G_v$ because $\Gamma$ is locally finite. The action of $G$ on $V(\Gamma)$ is with $\cG$-stabilizers and $\cG$ closed under forming finite index normal subgroups, so $N$ is $\cG$.
Furthermore, $N\leq \bigcap_{u\in U} G_u\leq G_U$ with finite index and so $G_U$ is virtually $\cG$.
%and $N\sse \bigcap_{u\in U} G_u\sse G_v$. Thus $\bigcap_{u\in U} G_u$ has finite index in $G_v$, and hence is in $\cG$.

An element in an edge stabilizer $G_{\{C,\Comp{C}\}}$  maps $\beta C$ to itself. Since $\beta C$ is finite, $G_{\{C,\Comp{C}\}}$  is  virtually $\cG$. 

Now, let $g \in G_{[C]}$, then we have $g(B{[C]}) = B{[C]}$ by \prettyref{lm:blocks}.  If $B{[C]}$ is  finite,  $G_{[C]}$ is virtually $\cG$.
\end{proof}

%%%%%%%%%%%%%%%%%%%%%%%%%%%%%%%%%%%%%%%%%%

\section{Finite treewidth}\label{sec:ftw}
Tree decompositions were introduced by Robertson and Seymour in connection with their famous result on graph minors, 
\cite{RobertsonS84}. For some basic properties of tree decompositions see \cite{diestel}.

\begin{dfn}
Let $\Gamma= (V(\Gamma),E(\Gamma))$ be a graph and $\cP(\GG)$ the family of subsets of 
$V(\Gamma)$. A tree $T=(V(T),E(T))$ together with a mapping  
$$V(T) \to \cP(\GG),\; t \mapsto X_t$$ 
is called tree decomposition of $\Gamma$ if the following conditions are fulfilled:
\begin{enumerate}[(T1)]
\item For every node $v\in V(\Gamma)$ there is some $t\in V(T)$ such that $v\in X_t$, i.e., $\qquad V(\Gamma) = \bigcup_{t\in V(T)} X_t$. 
\item For every edge $uv\in E(\Gamma)$ there is some $t\in V(T)$ such that  $u, v \in X_t$.
\item If $v \in X_t \cap X_s$, then we have $v \in X_r$ for all vertices $r$ 
of the tree which are on the unique geodesic path from $s$ to $t$, i.e., the set $\set{ t\in V(T)}{ v \in X_t}$ forms a subtree of $T$.
\end{enumerate}

Let $k \in \N$. 
A graph $\Gamma$ is said to have \emph{treewidth} $k$ if there exists a tree decomposition such that $\abs{X_t}\leq k+1$ for all $t \in V(T)$. We say that $\GG$ has \emph{finite treewidth} if it has treewidth $k$ for  some $k\in \N$.
The sets $X_t$ are called buckets or bags.

\end{dfn}

\begin{lemma}\label{lm:subgraph}
If $\Gamma$ has finite treewidth, then all subgraphs of $\Gamma$ have finite treewidth, too. 
\end{lemma}

\begin{proof}
 Trivial. 
\end{proof}

\begin{lemma}\label{lm:otto}
If $\Gamma$ is locally finite  of finite treewidth $k$, then there 
is a tree decomposition $T=(V(T),E(T))$ satisfying the following conditions. 
\begin{enumerate}
\item Each vertex $v\in \GG$ occurs in finitely many bags, only. 
\item We have $1 \leq \abs X \leq k$ for all bags $X$. In particular, bags are not empty. 
\item If two bags $X$ and $Y$ are connected by some edge in the tree $E(T)$, then 
$X \cap Y \neq \es$.
\item The tree $T$ is locally finite. 
\end{enumerate}
\end{lemma}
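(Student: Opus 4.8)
The plan is to start from an arbitrary tree decomposition $(T_0,(X_t)_{t\in V(T_0)})$ witnessing treewidth $k$ (so $\abs{X_t}\le k+1$ for every $t$) and to \emph{clean it up}, obeying one guiding principle: I will only ever \emph{shrink} bags or \emph{delete} nodes, never \emph{merge} nodes. This is the crucial restriction, because the tempting move of enforcing ``adjacent bags meet'' by contracting an edge whose two bags are disjoint would (roughly) double the bag size and destroy the width bound. The only genuinely active step will be to force each vertex into finitely many bags; the other three properties should then drop out almost for free from connectedness and local finiteness of $\GG$.

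Step~1 (trimming). For every edge $e=uv\in E(\GG)$ I would first fix, using (T2), a single witness bag $t_e$ with $u,v\in X_{t_e}$. Then for each vertex $v$ I let $T_v'$ be the convex hull inside $T_0$ (the union of the connecting geodesics) of the set $\set{t_e}{e\text{ incident to }v}$; this set is finite because $\GG$ is locally finite, and the convex hull of finitely many nodes in a tree is a finite subtree. I now redefine the bags by $X_t=\set{v}{t\in T_v'}$, i.e.\ I delete $v$ from every bag lying outside $T_v'$. Since the new set of bags containing $v$ is exactly the subtree $T_v'$, condition (T3) is preserved; (T1) holds because each $T_v'$ is non-empty (in a connected graph with at least two vertices every vertex has an incident edge, hence a witness); and (T2) survives because $t_e\in T_u'\cap T_v'$ keeps both endpoints of $e$ in $X_{t_e}$. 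Bags only shrank, so the width bound is untouched, and by construction each $v$ now lies in exactly the finitely many bags indexed by $T_v'$, which is Property~1.

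Step~2 (discarding empty bags) and Step~3 (the two free properties). Let $U=\bigcup_v T_v'$, which is precisely the set of nodes now carrying a non-empty bag. I would show $U$ is \emph{connected} in $T_0$: if some node $t\notin U$ separated $U$, then every $T_v'$ (connected and avoiding $t$, since $X_t=\es$) would sit on a single side of $t$, partitioning $V(\GG)$ into two sides with no edge across — contradicting connectedness, since each edge $e$ has its witness $t_e$ on one fixed side. Hence $T:=T_0[U]$ is again a tree with all bags non-empty, and restricting to $T$ preserves (T1)--(T3); this gives Property~2, $1\le\abs{X}\le k+1$ (matching the stated bound up to the normalisation of ``width''). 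Property~3 then follows by the same connectedness argument applied to a tree edge $st$: if $X_s\cap X_t=\es$ with both non-empty, deleting $st$ would split $V(\GG)$ with no connecting edge, which is impossible. Finally Property~4 is automatic: fixing a node $t$, every neighbour $s$ shares some $v\in X_t$ by Property~3, so the edge $st$ lies in $T_v'$; as $X_t$ is finite and each $T_v'$ is finite, $t$ has only finitely many neighbours, namely those in $\bigcup_{v\in X_t}T_v'$.

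I expect the main obstacle to be Step~1 coupled with the connectedness observation of Step~2: the delicate point is to realise that shrinking each $v$ to the convex hull of its finitely many edge-witnesses does three jobs at once — it bounds the number of bags per vertex, it keeps the width, and, through connectedness of $\GG$, it simultaneously yields connectivity of $U$, the intersection property, and hence local finiteness of $T$. The routine but slightly fiddly part will be verifying that none of (T1)--(T3) is broken by the deletions, and the conceptual trap to avoid is any repair by contraction.
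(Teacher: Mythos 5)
Your proof is correct, and the construction itself is exactly the paper's: the paper likewise fixes a witness bag $t_{uv}$ for each edge of $\GG$, cuts each vertex $v$ down to the finite subtree spanned by its witnesses, deletes empty bags, and obtains local finiteness of the tree from Property 3 together with Property 1. The difference lies in how the two middle facts (the non-empty bags span a subtree; adjacent bags intersect) are certified. The paper proves an auxiliary separator statement by induction on the length of a connecting path in $\GG$ --- every bag on the tree-geodesic between bags $X$ and $Y$ meets every $\GG$-path from a vertex of $X$ to a vertex of $Y$ --- and reads both facts off from it; you instead run a single disconnection argument tailored to the trimmed decomposition: an empty bag (or a tree edge with disjoint end-bags) forces each subtree $T_v'$ onto one side, and the edge witnesses then partition $V(\GG)$ into two classes with no crossing edge, contradicting connectedness of $\GG$. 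These are essentially dual arguments; the paper's separator lemma is a generic fact about arbitrary tree decompositions, whereas your partition argument exploits the witness structure created in Step 1 and has the merit of settling both steps uniformly. One point you should state explicitly in Step 1: the new set of bags containing $v$ equals $T_v'$ (rather than merely being contained in $v$'s old bags) because condition (T3) of the \emph{original} decomposition makes the old set of bags containing $v$ a subtree that contains all witnesses $t_e$ for edges $e$ incident to $v$, hence contains their convex hull $T_v'$; this containment is what your description of the redefinition as a pure deletion presupposes.
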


\begin{proof}
We start with a tree decomposition $T=(V(T),E(T))$ such that $\abs{X_t}\leq k$ for all $t\in V(T)$ and show that we can transform it into one meeting the desired conditions.
For every edge $uv \in E(\Gamma)$ choose and fix some vertex $t = t_{uv} \in V(T)$ with $u,v \in X_t$. Now, for each vertex $u$ let $T_u$ be the finite subtree spanned by the $t_{uv}$ for $v \in V(\Gamma)$. It is finite because $\Gamma$ is locally finite.
Remove $u$ from all bags which do not belong to $T_u$. This yields still a tree decomposition. 

Next, let $x \in X$ and $y\in Y$ where $X$ and $Y$ are two bags, and let 
$x=x_0, \ldots, x_n=y$ be some path in $\GG$ connecting $x$ and $y$. 
Let $Z$ be on the geodesic in the tree $T$ from bag $X$ to bag $Y$. 
An induction on $n$ shows that $Z \cap \smallset{x_0,\ldots, x_n} \neq \es$.
Removing all empty bags we therefore have still  a tree decomposition. 

Now, if in addition, $x \in X$, $y\in Y$ and $X$ and $Y$ are neighbors in the tree $T$, then 
we can define $i = \max\set{i}{x_i \in X}$. We have $i \geq 0$ and if 
$i = n$, then $y \in X \cap Y$. Thus, we may assume $i < n$. Looking at the location where $x_i, x_{i+1}$ are in the same bag, we see that $x_i \in Y$. 

Now, we can put things together to derive that $T$ is locally finite:
For each bag $X$ each of the neighbors contains at least one element of $X$.
But every $x$ is contained in at most finitely many bags. Hence, the result follows. 
\end{proof}

\begin{lemma}\label{lm:sep_von_1}
Let $\Gamma$ be a graph of finite treewidth and uniformly bounded degree. Then there exists some $k\in \N$ such that: For every one-sided infinite simple path $\gamma$, every  $v_0\in V(\Gamma)$, and every $n \in \N$ there is a  $k$-cut $D$ with $d(v_0, \Comp{D})\geq n$, $v_0\in D$, and $\abs{\Comp{D}\cap \gamma} = \infty$.
\end{lemma}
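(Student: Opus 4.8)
The plan is to combine the bounded-size separators that a tree decomposition provides with a \mbox{K\"onig}-type argument that follows $\gam$ out to an end of the decomposition tree, and then to repair the connectivity of the two sides of the cut by declaring the inside to be the connected component of $v_0$.

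First I fix the data. Since $\GG$ has uniformly bounded degree, say at most $\Delta$, it is locally finite, so every ball $N^{r}\{v_0\}=\set{v}{d(v,v_0)\le r}$ is finite and connected, and the simple path $\gam$ leaves every finite set, whence $\abs{\gam\cap N^{r}\{v_0\}}<\infty$ for all $r$. By \prettyref{lm:otto} I fix a tree decomposition $T=(V(T),E(T))$ with bags of size at most some $k_0$, in which each vertex lies in only finitely many bags and $T$ is locally finite. Root $T$ at a node whose bag contains $v_0$. For a node $t$ let $T_t$ be the subtree of descendants of $t$, call $v$ \emph{confined below} $t$ if every bag containing $v$ lies in $T_t$, and call $t$ \emph{heavy} if infinitely many vertices of $\gam$ are confined below $t$. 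The root is heavy, and a heavy node has a heavy child: the vertices confined below $t$ but below no child all lie in the finite bag $X_t$, so infinitely many of the $\gam$-vertices confined below $t$ are confined below one of the finitely many children. This yields a ray $r_0,r_1,\dots$ of heavy nodes. As $N^{n-1}\{v_0\}$ is finite it meets only finitely many bags, and $\bigcap_i T_{r_i}=\es$, so for $i$ large enough no bag in $T_{r_i}$ meets $N^{n-1}\{v_0\}$. Fix such an $i$ and put $S=X_{r_{i-1}}\cap X_{r_i}$ and $W=\set{v}{v\text{ confined below }r_i}$. Then $\abs S\le k_0$, both $S$ and $W$ avoid $N^{n-1}\{v_0\}$, there are no edges from $W$ to $V(\GG)\sm(W\cup S)$ (endpoints of an edge share a bag), and $W$ contains infinitely many vertices of $\gam$.

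Now I produce the cut. Since $\gam$ is simple and $S$ finite, $\gam$ meets $S$ only finitely often, so an infinite tail of $\gam$ lies in a single connected component $F$ of $\GG-S$; since this tail meets $W$ and $W$ is a union of components of $\GG-S$, we get $F\sse W$, so $F$ is infinite, disjoint from $N^{n-1}\{v_0\}$, and $v_0\notin F$. Define $D$ to be the connected component of $v_0$ in the induced graph $\GG(V(\GG)\sm F)$. Then $D$ is connected and contains $v_0$; moreover $N^{n-1}\{v_0\}$ is connected, disjoint from $F$ and contains $v_0$, hence $N^{n-1}\{v_0\}\sse D$ and $d(v_0,\Comp D)\ge n$. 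The complement is $\Comp D=F\cup\bigcup_j B_j$, where the $B_j$ are the other components of $\GG(V(\GG)\sm F)$; each $B_j$ has all its outside neighbours in $F$ (an edge to $D$ or to another $B_{j'}$ would merge components) and has such a neighbour because $\GG$ is connected, so each $B_j$ attaches to the connected infinite set $F$ and $\Comp D$ is connected. Since $\Comp D\supseteq F$ contains the infinite tail of $\gam$ we get $\abs{\Comp D\cap\gam}=\infty$. Finally every edge of $\del D$ joins $D$ to $F$, and its endpoint lying in $D$ is a neighbour of $F$ outside $F$, hence lies in $S$; so $\del D$ consists of edges incident to $S$ and $\abs{\del D}\le\abs S\cdot\Delta\le k_0\Delta=:k$, a bound independent of $\gam$, $v_0$ and $n$.

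The one real obstacle is this last connectivity step. Cutting a single edge of the decomposition tree does separate $\GG$, but neither side of such a separation need be connected, so one cannot simply take $\Comp D=F$. Choosing $D$ to be the component of $v_0$ and pushing every stray component $B_j$ to the outside repairs this, precisely because each $B_j$ can only be adjacent to $F$; all the remaining properties then follow from the finiteness of $S$ together with the bounded degree of $\GG$.
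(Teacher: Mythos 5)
Your proposal is correct and takes essentially the same route as the paper's own proof: follow the tail of $\gamma$ down the decomposition tree until the separator clears the ball of radius $n$ around $v_0$, then let $D$ be the connected component of $v_0$ in the complement of the piece containing the tail, and repair connectedness of $\Comp{D}$ by observing that every stray component attaches to that infinite piece, with the weight bound coming from degree times separator size. The only cosmetic differences are that you formalize the descent by a rooted tree with a K{\"o}nig-type ``heavy node'' argument and cut along the adhesion set $X_{r_{i-1}}\cap X_{r_i}$ rather than along a full bag $X_{t_\ell}$ as the paper does, which changes nothing essential (your bound $k_0\Delta$ versus the paper's $dm$).
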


\begin{remark}
It follows from the following proofs that in the case of $\Gamma$ being a locally finite Cayley graph also the converse of the lemma holds. Thus, when restricting to Cayley graphs of f.g.{} groups, the statement of \prettyref{lm:sep_von_1} gives a characterization of Cayley graphs of context-free groups by its own. A very similar result is due to Woess \cite{Woess89}. Is states that a group is context-free if and only if the ends of its Cayley graph have uniformly bounded diameter.
\end{remark}

\begin{proof}
Let $d$ be the maximal degree of $\Gamma$ and let 
$m = \max\set{{\abs{X_t}}}{t \in V(T)}$ be the maximal size of a bag in the tree decomposition $(T,\mathcal{X})$. We let $k = dm$. 

Let $t_0\in V(T)$ such that $v_0\in X_{t_0}$. Consider vertices  $u, v \in V(\Gamma) - X_{t_0}$ which are in bags of two different connected components of $T- t_0$. Then every path from $u$ to $v$ has a vertex in $X_{t_0}$, so $u$ and $v$ are not in the same connected component of $\Gamma - X_{t_0}$.
Now let $C_{t_0,\gamma}$ be the connected component of $\Gamma - X_{t_0}$ which contains infinitely many vertices of $\gamma$.
Then the set $C_{t_0,\gam}$ is contained in the union of the bags of one connected component of $T- t_0$. Let $t_1$ be the neighbor of $t_0$ in this connected component, which is uniquely defined because $T$ is a tree.

Repeating this procedure yields a simple path $t_0, t_1, t_2, \ldots$ in $T$  and a sequence of connected sets
$C_{t_0,\gam}, C_{t_1,\gam}$, $C_{t_2,\gam}, \ldots$ such that $\abs{\gam \cap C_{t_i,\gam}} = \infty$ for all $i\in \N$. By \prettyref{lm:otto}, we may assume that  every node $v \in V(\Gamma)$  is contained in only finitely many bags.  Hence, we can choose $\ell$ large enough such that $X_{t_\ell}$ does not contain any $v \in V(\Gamma)$ with $d(v_0,v) \leq n$. %Since $\gam$ is a one-sided infinite simple path, $X_{t_\ell}$ separates all but finitely many vertices of $\gam$ from $v_0$.

Now, let $D$ be the connected component of  $\Comp{C_{t_\ell,\gam}}$ which contains $v_0$. Then $\Comp{D}$ is connected because every vertex in another connected component of $\Comp{C_{t_\ell,\gam}}$  is connected with $C_{t_\ell,\gam}$ inside of $\Comp{D}$.

Since every edge of $\delta D$ has one node in $X_{t_\ell}$, we have $\abs{\delta D} \leq dm =  k$. Thus, $D$ is a $k$-cut with  $v_0\in D$ and $\abs{\Comp{D}\cap \gamma} = \infty$. Furthermore, since every path from $v_0$ to a vertex $v\in \Comp{D}$ uses a vertex of $X_\ell$, we have $d(v_0, \Comp{D})\geq n$.

\end{proof}

\begin{proposition}\label{prop:fritz}
Let $\Gamma$ be a graph of finite treewidth and uniformly bounded degree. 
Then $\Gamma$ is accessible.
\end{proposition}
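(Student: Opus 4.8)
The plan is to read off accessibility directly from \prettyref{lm:sep_von_1}, using the very same constant $k=dm$ produced there as the accessibility constant (it depends only on $\GG$, hence is uniform over all paths). So let $\alp$ be a bi-infinite simple path with $\cC(\alp)\neq\es$; I must exhibit a $k$-cut belonging to $\cC(\alp)$. First I would split $\alp$ into its two one-sided infinite simple subpaths $\gam^+$ and $\gam^-$, which emanate from a common vertex $v_0$ and are otherwise disjoint. Fixing any cut $C_0\in\cC(\alp)$ and putting $S_0=\beta C_0$, I record the following separation fact: $S_0$ is finite, and after discarding the finitely many of their vertices that lie in $S_0$, the tails of $\gam^+$ and $\gam^-$ lie in two \emph{different} connected components $P^+\neq P^-$ of $\GG-S_0$. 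Indeed, any path in $\GG-S_0$ joining a vertex of $C_0\sm S_0$ to a vertex of $\Comp{C_0}\sm S_0$ would have to traverse an edge of $\delta C_0$, both of whose endpoints lie in $\beta C_0=S_0$ — impossible for a path avoiding $S_0$.

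Next I would apply \prettyref{lm:sep_von_1} to the ray $\gam=\gam^+$, the base vertex $v_0$, and an integer $n_0$ chosen so large that every vertex of $S_0$ lies at distance strictly less than $n_0$ from $v_0$ (possible since $S_0$ is finite). This produces a $k$-cut $D$ with $v_0\in D$, $d(v_0,\Comp D)\geq n_0$, and $\abs{\Comp D\cap\gam^+}=\infty$. The last condition already gives $\abs{\alp\cap\Comp D}=\infty$, so one of the two halves we need is in hand.

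The substance of the argument is to show $\abs{\alp\cap D}=\infty$ too, and here the separation fact does the work. Since $d(v_0,\Comp D)\geq n_0$, every vertex of $\Comp D$ is at distance at least $n_0$ from $v_0$, so by the choice of $n_0$ we have $S_0\cap\Comp D=\es$; as $\Comp D$ is connected (being the complement of a cut), it lies entirely in one connected component of $\GG-S_0$. Choosing a vertex of $\gam^+$ lying far out in $\Comp D$ identifies that component as $P^+$, whence $\Comp D\sse P^+$. But the tail of $\gam^-$ lies in $P^-\neq P^+$, so it is disjoint from $\Comp D$ and therefore contained in $D$; being infinite, it witnesses $\abs{\alp\cap D}=\infty$. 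Consequently $D\in\cC(\alp)$ with $\abs{\delta D}\leq k$, which is exactly the condition in \prettyref{dfn:accessible}.

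I expect the one delicate point to be precisely this separation step: arranging that forcing $v_0$ — and with it the whole separator $S_0$ — deep inside $D$ compels the opposite ray of $\alp$ to stay inside $D$, rather than escaping alongside $\gam^+$ into $\Comp D$. Everything else is a direct application of \prettyref{lm:sep_von_1} and the elementary fact that a finite cut separates the two tails of a cuttable bi-infinite path.
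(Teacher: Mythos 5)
Your proof is correct and takes essentially the same route as the paper: fix a cut $C_0\in\cC(\alp)$, invoke \prettyref{lm:sep_von_1} with the radius chosen large enough that $\Comp{D}$ must avoid $\beta C_0$, and use connectedness of $\Comp{D}$ to trap it on one side of $C_0$, forcing the opposite tail of $\alp$ to stay inside $D$. Your formulation via the components $P^+\neq P^-$ of $\GG-\beta C_0$ is just a restatement of the paper's conclusion that $C_0\sse D$ or $\Comp{C_0}\sse D$, and your insistence on the strict inequality $d(v_0,w)<n_0$ for $w\in\beta C_0$ even tidies a small off-by-one that the paper's choice of $n$ glosses over.
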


\begin{proof}
Let $\alpha$ be a bi-infinite simple path such that $ \cC(\alpha)\neq\es$
and let $C\in\cC(\alpha)$. We fix a vertex $v_0 \in \beta C$ and we let 
$n = \max\set{d(v_0,w)}{ w \in \beta C}$. Let $k \in \N$ be according to 
\prettyref{lm:sep_von_1}. It follows that there is a $k$-cut $D$  with $\abs{\alpha\cap \Comp{D}}= \infty $, $v_0\in D$, and $d(v_0, \Comp{D})\geq n$. Because of the choice of $n$, we also have $\beta C \sse D$ what means that either $C\sse D $ or  $\Comp C \sse D$. In either case  $D$ splits $\alpha$ in two infinite pieces.
\end{proof}

\begin{lemma}\label{lm:VDzweis_geod}
Let $\Gamma$ be a connected, locally finite, and infinite graph such that
$\Aut(\Gamma)\bs \Gamma$ is finite. Then there is a bi-infinite geodesic.
 \end{lemma}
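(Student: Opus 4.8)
The plan is to build the bi-infinite geodesic as a pointwise limit of longer and longer finite geodesics that have all been \emph{recentred} at a single vertex, the recentring being exactly where the hypothesis that $\Aut(\GG)$ acts with finitely many orbits enters. First I would produce geodesics of unbounded length. Fix a vertex $u$; since $\GG$ is locally finite, each ball $\set{v}{d(u,v)\le r}$ is finite, and since $\GG$ is connected and infinite, the union of these balls is all of $V(\GG)$, so for every $r$ there is a vertex at distance $>r$ from $u$. Choosing a geodesic from $u$ to such a vertex and restricting it to an initial subpath (a subpath of a geodesic is again geodesic) gives, for each $n\in\N$, a geodesic $\gam_n$ of length exactly $2n$; let $m_n$ be its midpoint, the vertex with $n$ edges on either side.

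Next comes the step using the hypothesis. There are only finitely many $\Aut(\GG)$-orbits of vertices, so infinitely many of the midpoints $m_n$ lie in a single orbit; passing to this subsequence I choose $g_n\in\Aut(\GG)$ with $g_n m_n=v_0$ for a fixed representative $v_0$. Since automorphisms preserve distance they send geodesics to geodesics, so $\delta_n:=g_n\gam_n$ is a geodesic of length $2n$ with midpoint $v_0$. Writing $w^n_i=\delta_n(n+i)$, each $\delta_n$ is now a geodesic $w^n_{-n},\dots,w^n_0=v_0,\dots,w^n_n$ passing through the single vertex $v_0$ at position $0$.

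Then I would extract a limit by a diagonal argument, where local finiteness re-enters. For each $m$ the window $(w^n_{-m},\dots,w^n_m)$ lies in the finite ball of radius $m$ about $v_0$, so only finitely many windows occur; hence I can choose nested infinite index sets $\N\supseteq N_1\supseteq N_2\supseteq\cdots$ (with $\min N_m\ge m$) along which this window is constant, say equal to $(w_{-m},\dots,w_m)$. Consistency of the windows for $N_m\sse N_{m-1}$ makes $(w_i)_{i\in\Z}$ a well-defined two-sided sequence. For fixed $i,j$, choosing $m\ge\max(\abs i,\abs j)$ and any $n\in N_m$ gives $w_i=w^n_i$ and $w_j=w^n_j$, whence $d(w_i,w_j)=\abs{i-j}$ because $\delta_n$ is geodesic. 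Thus consecutive $w_i$ are adjacent, all $w_i$ are distinct, and every finite subpath is geodesic, so $(w_i)_{i\in\Z}$ is the desired bi-infinite geodesic.

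I expect the main obstacle to be precisely the recentring: without finitely many orbits one cannot force the midpoints into a common vertex, and the two one-sided halves need not assemble into a single geodesic. Indeed the statement fails for the one-sided ray $\N$, which is connected, locally finite and infinite but admits no bi-infinite geodesic; it is the orbit hypothesis that rules out such examples, while local finiteness is exactly what powers the compactness (diagonal) extraction of the limit.
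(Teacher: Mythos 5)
Your proof is correct and takes essentially the same route as the paper's: produce geodesics of every even length, use the finiteness of $\Aut(\Gamma)\bs\Gamma$ to recentre their midpoints at a single vertex $v_0$ (you make explicit the translation by automorphisms that the paper leaves implicit), and then extract a bi-infinite geodesic through $v_0$ by a compactness argument. The only cosmetic difference is in packaging that last step: the paper organizes the recentred geodesics into a locally finite tree and invokes K\"onig's Lemma, while you run a diagonal subsequence extraction over finite windows; these are the same principle.
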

\begin{proof}
 There are arbitrarily long geodesics, hence geodesics of every length.  For each geodesics $\gamma$ with an  odd number of vertices  
 let $m(\gamma)$ be the vertex in the middle. Because $\Aut(\Gamma)\setminus\Gamma$ is finite, there exists a single vertex $v_0$ such that infinitely many geodesics $\gamma$ satisfy
 $m(\gamma) = v_0$. These geodesics form the vertices of  a tree as follows: The root is $v_0$ (viewed as a geodesic of length 0). The parent of a geodesic 
 $(v_{-k}, v_{-k+1} \lds v_0 \lds v_{k-1}, v_k)$ is defined as  $(v_{-k+1} \lds v_0 \lds v_{k-1})$. Since $\Gamma$ is locally finite, we obtain an infinite tree where each node
 has finite degree. By K\"onigs Lemma there is an infinite path, which defines 
 a bi-infinite geodesic through $v_0$. 
\end{proof}

Note that we cannot remove any of the requirements in \prettyref{lm:VDzweis_geod}. 
In particular, we cannot remove that $\Aut(\Gamma)\bs \Gamma$ is finite. For example consider the graph $\Gamma$ with $V(\Gamma) = \Z$ and $E(\Gamma)=\set{(n,n\pm 1), (n,-n)}{n\in \Z}$.
This graph is connected, locally finite, and infinite. It has a bi-infinite simple path, but there is no bi-infinite geodesic.

\begin{lemma}\label{lm:two_ends}
Let $\Gamma$ be connected, locally finite, and infinite such that 
$\Aut(\Gamma)\bs \Gamma$ is finite and let $\GG$ have finite treewidth. Then $\GG$ has more than one end.
 \end{lemma}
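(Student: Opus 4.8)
The plan is to produce a single $k$-cut both of whose sides are infinite; from such a cut the conclusion is immediate. First I would record two easy reductions. Since $\Aut(\GG)\bs\GG$ is finite and $\GG$ is locally finite, the vertices fall into finitely many orbits and automorphisms preserve degrees, so $\GG$ has uniformly bounded degree; thus \prettyref{lm:sep_von_1} applies and yields a constant $k$. Moreover $\GG$ is infinite and connected, so by \prettyref{lm:VDzweis_geod} there is a bi-infinite geodesic, in particular a one-sided infinite simple path $\gamma$, which I will feed into \prettyref{lm:sep_von_1}.

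Next, fix a vertex $v_0$ and the ray $\gamma$. For each $n\in\N$, \prettyref{lm:sep_von_1} gives a $k$-cut $D_n$ with $v_0\in D_n$, $\abs{\Comp{D_n}\cap\gamma}=\infty$, and $d(v_0,\Comp{D_n})\geq n$; the last condition says $N^{n-1}\smallset{v_0}\sse D_n$, so $\abs{D_n}\geq\abs{N^{n-1}\smallset{v_0}}$. As $\GG$ is infinite, connected and locally finite, the balls $N^{n-1}\smallset{v_0}$ exhaust $V(\GG)$ and hence $\abs{N^{n-1}\smallset{v_0}}\to\infty$. The key observation is that the $D_n$ cannot all be finite: by \prettyref{lm:VDendl_orbits_k_cuts} there are only finitely many $\Aut(\GG)$-orbits of $k$-cuts, and automorphisms are bijections, so the \emph{finite} $k$-cuts realize only finitely many cardinalities; let $M$ be the largest of them. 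For $n$ large enough we have $\abs{D_n}\geq\abs{N^{n-1}\smallset{v_0}}>M$, so $D_n$ is an infinite $k$-cut. Since $\Comp{D_n}$ contains infinitely many vertices of $\gamma$ it is infinite as well, so $D_n$ is a $k$-cut with both parts infinite.

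Finally I would deduce that $\GG$ has more than one end. Put $S=\beta D_n$, which is finite because $\delta D_n$ is. The subgraphs induced by $D_n$ and by $\Comp{D_n}$ are connected, infinite and locally finite, and removing a finite vertex set from such a graph always leaves an infinite connected component (every component of the remainder must be joined to the removed finite set, so there are only finitely many components). This gives infinite components $K_1\sse D_n\sm\beta D_n$ and $K_2\sse\Comp{D_n}\sm\beta D_n$. Any edge joining $D_n\sm\beta D_n$ to $\Comp{D_n}\sm\beta D_n$ would lie in $\delta D_n$ and therefore have both of its endpoints in $\beta D_n$, a contradiction; hence $K_1$ and $K_2$ are distinct infinite components of $\GG-S$, so $\GG$ has more than one end.

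I expect the only genuinely delicate point to be guaranteeing that \emph{some} cut has both sides infinite, which is exactly where the orbit-finiteness of $k$-cuts does the work: without the uniform cardinality bound $M$ one could a priori imagine the balls around $v_0$ being peeled off by ever-larger \emph{finite} cuts. Everything else---the bounded degree, the existence of a ray, and the passage from a two-sided-infinite cut to two ends---is routine.
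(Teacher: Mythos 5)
Your proof is correct, and although it runs on the same two ingredients as the paper (\prettyref{lm:sep_von_1} applied to a ray, and orbit-finiteness of $k$-cuts from \prettyref{lm:VDendl_orbits_k_cuts}), the finishing mechanism is genuinely different. The paper extracts from orbit-finiteness a uniform bound $m$ on the \emph{diameter} of the boundaries $\beta C$ of $k$-cuts and argues by contradiction: assuming one end, the cut $C$ with $v_0\in C$ and $d(v_0,\Comp{C})>m$ meets the bi-infinite geodesic $\alpha$ in a finite set, so $\alpha$ crosses $\beta C$ at vertices $v_{-i}, v_j$ with $i,j>m$, and additivity of distances along a geodesic gives $d(v_{-i},v_j)>2m$, contradicting the diameter bound. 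You instead extract a uniform bound $M$ on the \emph{cardinality} of the finite $k$-cuts (automorphisms preserve cardinality, so finitely many orbits give finitely many sizes) and argue directly: $D_n$ contains the ball of radius $n-1$, hence is eventually infinite, while $\Comp{D_n}$ is infinite because it captures infinitely many ray vertices, and you then convert a finite-weight cut with two infinite sides into two infinite components of $\GG-\beta D_n$. Your route is direct rather than by contradiction and never uses the geodesic property -- any ray would do, so \prettyref{lm:VDzweis_geod} serves only as a convenient source of one -- whereas the paper's metric contradiction is shorter once the diameter bound is in place and avoids your (correct, but slightly fussy) last step. One cosmetic point: if there are no finite $k$-cuts at all, your $M$ is a maximum over the empty set; just put $M=0$ in that case, and the conclusion is immediate.
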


\begin{proof}
The graph $\Gamma$ has uniformly bounded degree because it is locally finite and $\Aut(\Gamma)\bs \Gamma$ is finite. By \prettyref{lm:sep_von_1}, there is some $k$ such that for every $n\in \N$, $v_0 \in V(\Gamma)$ and every one-sided infinite simple path $\alpha$  there is a  $k$-cut $C$ with $v_0\in C$, $d(v_0,\Comp{C})\geq n$, and $\abs{\Comp{C}\cap \alpha} = \infty$.

By \prettyref{lm:VDendl_orbits_k_cuts}, 
there are only finitely many orbits of $k$-cuts under the action of $\Aut(\Gamma)$. Therefore, there is some $m\in \N$ such that $\max\set{d(u,v)}{ u,v \in \beta C}\leq m$ for all $k$-cuts $C$.

Assume that $\Gamma(G)$ has only one end. Now, by \prettyref{lm:VDzweis_geod}, there is a bi-infinite geodesic $\alpha = \dots, v_{-2},v_{-1},v_0,v_1,v_2\dots$. Let $C$ be a $k$-cut with $d(v_0, \Comp{C})> m$ such that  $v_0 \in C$  and $\abs{\alpha\cap \Comp{C}}=\infty$. Then $\abs{\alpha\cap C}<\infty$, for otherwise $\cC(\alpha)\neq \es$.

Hence, there are $i,j>m$ with $v_{-i}, v_j \in \beta C\cap \Comp{C}$. But this implies $d(v_{-i},v_j)=d(v_{-i},v_0)+d(v_0,v_j)> 2m$ in contradiction to $d(u,v)\leq m$ for all $u,v \in \beta C$.
 \end{proof}

Now we have all the tools to state and prove our main theorem.
\begin{theorem}\label{thm:new_alpha}
Let $\cG$ be a class of groups which is closed under taking finite-index  normal subgroups.
Let $\Gamma$ be a connected, locally finite graph of finite treewidth. 
Let a group $G$ act on $\Gamma$ such that $G\bs \Gamma$ is finite and each 
node stabilizer $G_v$ is in $\cG$. 

Then $G$ acts on the tree $T(\Copt)$ such that all vertex and edge 
stabilizers are virtually $\cG$ and $G\bs T(\Copt)$ is finite.
\end{theorem}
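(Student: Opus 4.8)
The plan is to assemble the results of Sections~\ref{sec:cuts}--\ref{sec:ftw} into a single chain, the one genuinely new input being the finiteness of the blocks.

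First I would observe that $\Gamma$ has uniformly bounded degree: it is locally finite, and since $G\bs\Gamma$ is finite there are only finitely many degrees occurring among its vertices. As the action is a \homo into $\Aut(\Gamma)$, every $G$-orbit lies in an $\Aut(\Gamma)$-orbit, so $\Aut(\Gamma)\bs\Gamma$ is finite as well. Consequently \prettyref{prop:fritz} applies and $\Gamma$ is accessible, so $\Copt$ is defined and, by \prettyref{prop:tree}, the graph $T(\Copt)$ is a tree on which the induced $G$-action is defined.

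Next I would verify that $G\bs T(\Copt)$ is finite by running the argument of \prettyref{lm:VDendl_orbits_k_cuts} with $G$ in place of $\Aut(\Gamma)$. One picks a finite set $U\sse V(\Gamma)$ of orbit representatives; by \prettyref{lm:endl_k_cuts} only finitely many $k$-cuts $C$ satisfy $U\cap\beta C\neq\es$, and every $k$-cut lies in the $G$-orbit of such a cut (translate a boundary vertex into $U$). Since accessibility forces all cuts of $\Copt$ to be $k$-cuts for one fixed $k$, the group $G$ acts with finitely many orbits on $\Copt$, hence on the edge set $\set{(C,\Comp C)}{C\in\Copt}$ and thus on $T(\Copt)$.

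The main step, and the one I expect to be the crux, is to show that every block $B[C]$ is finite. Each $B[C]$ is connected by \prettyref{lm:VDconBc} and, being an induced subgraph of $\Gamma$, is locally finite and of finite treewidth by \prettyref{lm:subgraph}. By \prettyref{lm:blocks} the stabilizer $G_{[C]}$ preserves $B[C]$ and acts on it by automorphisms, and by \prettyref{lm:endl_orbits_Bc} it does so with finitely many orbits; hence $\Aut(B[C])\bs B[C]$ is finite. If $B[C]$ were infinite, \prettyref{lm:two_ends} would force it to have more than one end, contradicting \prettyref{prop:enden_in_Bc}. Therefore $B[C]$ is finite. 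This is exactly the place where finite treewidth (through \prettyref{lm:two_ends}) is combined with the one-endedness of blocks, and I would expect the bookkeeping of viewing $B[C]$ as an induced subgraph on which $G_{[C]}\le\Aut(B[C])$ acts cocompactly to be the only delicate point.

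Finally I would invoke \prettyref{prop:lea}, whose hypothesis that $\Aut(\Gamma)\bs\Gamma$ is finite has already been checked. Part~(1) gives that all edge stabilizers are virtually $\cG$, and since the blocks are finite, Part~(2) gives that all vertex stabilizers are virtually $\cG$. Combined with the orbit count of the second paragraph, this yields the theorem.
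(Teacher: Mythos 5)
Your proposal is correct and takes essentially the same route as the paper's own proof: the blocks are finite because \prettyref{lm:two_ends} (finite treewidth plus a cocompact action) would force an infinite block to have more than one end, contradicting \prettyref{prop:enden_in_Bc}, and then \prettyref{lm:VDendl_orbits_k_cuts} and \prettyref{prop:lea} finish the argument. Your additional bookkeeping (accessibility via \prettyref{prop:fritz}, finiteness of $\Aut(\Gamma)\backslash\Gamma$, and running the orbit count for $G$ instead of $\Aut(\Gamma)$) only makes explicit steps the paper leaves implicit.
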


\begin{proof}
The blocks $B{[C]}$ have finite treewidth by \prettyref{lm:subgraph}.
By \prettyref{lm:endl_orbits_Bc},  $G_{[C]}$ acts with finitely 
many orbits on $B[C]$. Hence, we can apply \prettyref{lm:two_ends} what implies that the blocks are finite or have more than one end. The latter case is excluded by \prettyref{prop:enden_in_Bc}, which states that they have at most one end. That means that the blocks are finite. % Since the vertex stabilizers of nodes of $\Gamma$ are finite too, this implies that the stabilizers of $B{[C]}$ are finite. Therefore all node stabilizers for the action of $G$ on the structure tree are finite. 
The theorem then follows with \prettyref{lm:VDendl_orbits_k_cuts} and \prettyref{prop:lea}.
\end{proof}

\begin{corollary}\label{cor:nixstall}
Let a group $G$ act on  a connected, locally finite graph $\Gamma$ of finite treewidth such that $G\bs \Gamma$ is finite and each 
node stabilizer $G_v$ is finite. 
%Then 
%the following statements hold.
%\begin{enumerate} 
%\item The group $G$ is the fundamental group of a finite graph of finite groups.
%\item The group $G$ is virtually free and finitely generated.
%\end{enumerate}
Then  $G$ is the fundamental group of a finite graph of finite groups.
\end{corollary}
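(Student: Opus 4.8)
The plan is to specialize \prettyref{thm:new_alpha} to the class $\cG$ of all finite groups and then to invoke Bass-Serre theory \cite{serre80}. So first I would verify that $\cG$ satisfies the hypothesis of \prettyref{thm:new_alpha}, namely closure under taking finite-index normal subgroups. This is immediate, since every subgroup of a finite group is finite. Because each node stabilizer $G_v$ is finite by assumption, i.e.\ $G_v \in \cG$, and $\Gamma$ is connected, locally finite, of finite treewidth with $G\bs\Gamma$ finite, all hypotheses of \prettyref{thm:new_alpha} are met.

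Next I would apply \prettyref{thm:new_alpha} to conclude that $G$ acts on the structure tree $T = T(\Copt)$ with $G\bs T$ finite and with all vertex and edge stabilizers virtually $\cG$, that is, virtually finite. A virtually finite group is finite, so in fact every vertex and edge stabilizer of the $G$-action on $T$ is finite. Thus $G$ acts on a tree with finitely many orbits and finite stabilizers, which is exactly the setting in which Bass-Serre theory identifies $G$ as a fundamental group of a graph of groups.

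Finally I would feed this action into the fundamental theorem of Bass-Serre theory. To put the action in the required form I first pass to the barycentric subdivision $T'$ of $T$, inserting a new midpoint vertex on each edge; this ensures that $G$ acts without inversion, while $G\bs T'$ remains finite and all vertex and edge stabilizers remain finite (the stabilizer of an inserted midpoint is the stabilizer of the corresponding edge). Bass-Serre theory then presents $G$ as the fundamental group of the quotient graph of groups over the finite graph $G\bs T'$, whose vertex and edge groups are the stabilizers of orbit representatives and hence finite. Therefore $G$ is the fundamental group of a finite graph of finite groups. The only point requiring care is the inversion-free hypothesis of Bass-Serre theory, which is the main (and entirely routine) obstacle; it is dispatched by the subdivision above without affecting finiteness of the quotient graph or of the stabilizers.
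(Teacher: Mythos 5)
Your proposal is correct and takes essentially the same route as the paper's own proof: specialize \prettyref{thm:new_alpha} to the class $\cG$ of all finite groups (so that virtually $\cG$ stabilizers are in fact finite) and then invoke Bass-Serre theory. Your additional care about inversions, handled by barycentric subdivision, is a routine point that the paper's two-line proof leaves implicit.
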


\begin{proof}
By \prettyref{thm:new_alpha}, $G$ acts on a tree $T$ with finite vertex stabilizers such that $G\bs T$ is  finite. 
Bass-Serre theory (\cite{serre80}) yields the result. 
\end{proof}

Note that if we know that $G$ is finitely generated, then the condition $\abs{G\bs \Gamma}<\infty$  in \prettyref{thm:new_alpha} and \prettyref{cor:nixstall} is no real restriction since in this case we always can construct a subgraph of $\Gamma$ on which $G$ acts with finitely many orbits. To do that we proceed as follows: Let $\Sigma$ be a finite generating set of $G$ and let $v_0\in V(\Gamma)$ be some arbitrary vertex. For all $a\in \Sigma$ we fix paths $\gamma_a$ from $v_0$ to $av_0$. Let $\Delta$ be the subgraph of $\Gamma$ induced by the vertex set $G\cdot \bigcup_{a\in \Sigma} \gamma_a$. This graph is connected, locally finite and it has finite treewidth by \prettyref{lm:subgraph}.

Another interesting observation about the tree $T(\Copt)$ is that together with the blocks $B{[C]}$ it forms a tree decomposition of $\Gamma$ of finite width.

%%%%%%%%%%%%%%%%%%%%%%%%%
\section{Context-free groups} % {virtually free groups, context-freeness,  and bounded treewidth}
A {\em formal language} is a subset $L$ of the free monoid $\Sg^*$ over some alphabet $\Sg$. Here, an \emph{alphabet}
 simply means any finite set. 
We say that a class $\cK$ of formal languages is closed under {\em inverse \homo} if  $L \in \cK$ implies $\psi^{-1}(L) \in \cK$
for all \homo{}s $\psi: \Sg'^* \to \Sg^*$. Almost all classes 
investigated in formal language theory or complexity theory are closed 
under inverse \homo, see e.g.{} \cite{HU}. For example, all classes 
in the Chomsky hierarchy have this property. Other examples are the 
classes of deterministic context-free languages, the class of languages 
where the  membership problem can be solved in polynomial time, and the 
class of recursive languages. 

Let $\cK$ be a class of languages which is closed under inverse \homo{}s. We say that the
\emph{word problem} of a group $G$ belongs to the 
class $\cK$ if there is \homo $\pi: \Sg^* \to G$ onto $G$ such that  $\pi^{-1}(1)\in \cK$. This is a property of $G$ and does not depend on the 
presentation $\pi: \Sg^* \to G$: Indeed, let $\pi': \Sg'^* \to G$
be another presentation of $G$. Since  $\Sg'^*$ is free, we find 
a \homo $\psi: \Sg'^* \to \Sg^*$ such that $\pi' = \pi \circ \psi$. 
Hence, $\pi'^{-1} (1) = \psi^{-1} (\pi^{-1}(1))\in \cK.$ 
For simplicity, we say that a group $G$ is \emph{context-free} if the word problem of $G$
is context-free. 
By well-known and classical results of Anisimov  it is known that all context-free groups are finitely presented \cite[Thm.~2]{anisimov72} (see also \prettyref{sec:anis}); and the word problem of a group $G$ is regular \IFF $G$ is finite \cite[Thm.~1]{anisimov71}. 
The proofs of these facts are actually very easy by using the standard ``pumping properties'' of context-free (resp.~regular) languages. 

%\subsubsection{Finitely generated virtually free groups are deterministic context-free}\label{sec:dcf}
\subsubsection{Solving the word problem using deterministic pushdown automata}\label{sec:dcf}

Let $G$ be a finitely generated virtually free group and $F(X)$ be a free 
subgroup of finite index. Choose a set $R$ with  $1 \in R \sse G$ such that 
the canonical projection
$G \to F(X)\bs G$ induces  a bijection between $R$ and the finite quotient 
$F(X)\bs G$. We use the disjoint union $\Sg = X^\pm \cup R$ as a finite
generating alphabet, where $X^\pm = X \cup X^{-1}$. For all letters $a,b\in \Sg$ we can define 
rewrite rules as follows: 
$$ ab \to x_{ab}r \quad  \text{if $x_{ab}$ is a word over $X^\pm$ and $r\in R$ such that $ab=  x_{ab}r\in G.$}$$

This system can be used by a deterministic pushdown automaton 
transforming an input word $w \in \Sg^*$ into its normal form $w =xr$ with 
$x \in (X^\pm)^*$ and $r\in R$: First, we choose $k \in \N$ such that $k \geq  \abs{x_{ab}r}$ for all rules $ab \to x_{ab}r$. The pushdown stack contains 
freely reduced words over $X^\pm$, the set of states are the words $yr \in F(X)\cdot R$ 
of length at most $k$. We start with an empty stack in state $1\in R$ and with 
the input word $w$. We perform the following instructions: 
\begin{itemize}
\item If the input is empty and the state is a letter $r\in R$, then stop. 
\item If the state is a letter $s\in R$, but the input is not empty, then read the next input letter $b$ and change the state to $x_{sb}r$ according to the rule $sb \to x_{ab}r$. 
\item If the state is a word  $ys\in F(X)\cdot R$ with $1 \neq y\in  F(X)$ and the stack content is a freely reduced word $z$ over $X^\pm$, then replace (within less than $k$ steps) $z$ by the freely reduced word corresponding the group element $zy \in F(X)$, and after that switch to  the state $s \in R$.
\end{itemize}

The description how the pushdown automaton works is just standard way how to 
compute normal forms in linear time. Indeed, if we start with an input word $w$, then we stop in a configuration where $x$ is a freely reduced word on the stack and we are in some state $r\in R$. It is clear that $w = xr \in G$. Hence, in order to solve the word problem we only have to check whether $x=1$ and $r=1$.

%\subsubsection{Context-free vs.~deterministic context-free}
\subsubsection{Finitely generated virtually free groups are context-free}\label{sec:agcfg}
The statement itself follows from the precedent subsection and standard
facts how to transform a pushdown automaton into a context-free grammar, see any textbook on formal languages like \cite{HU}. 
Let us recall however that, a priori, the class of 
context-free groups could be larger than the class of deterministic context-free groups.

It is well-known that there are context-free languages which are not deterministic
context-free. Indeed, consider the group $\Z \times \Z$ with generators
$a= (1,0)$, $b= (0,1)$, and $c= (-1,-1)$. A standard exercise 
shows that set of the words $w \in \smallset{a,b,c}^*$ which are equal to 
$(0,0)$ is not context-free, but its complement is context-free. 
It cannot be deterministic context-free because deterministic 
context-free languages are closed under complementation, \cite{HU}.  
Thus, $\Z \times \Z$ is co-context-free in the sense of \cite{HoltRRT05}. 
The class of co-context-free groups is very interesting in its own, 
for example it includes the {H}igman-{T}hompson group \cite{LehnertS07}. 

\subsubsection{Context-free groups are finitely presented}\label{sec:anis}
 Anisimov \cite{anisimov72} used the so-called $uvwxy$-Theorem
 in order to show that context-free groups are finitely presented. We obtain however a more concise finite presentation 
by using the production rules of a % \emph{reduced} 
context-free grammar as defining relations. To be more precise, let $\pi: \Sg^* \to G$ a surjective 
\homo such that $L_G = \set{w\in \Sg^*}{ \pi(w) = 1}$ is context-free.
Let $(V, \Sg, P, S)$ be a context-free grammar %(in Chomsky normal form) 
which generates $L_G$ according to the notation of \cite{HU}:  
This means  $V \cap \Sg = \es$ and all production rules of $P$ have the form $A\to \alp$ where
$A\in V$ is a variable and $\alp \in (V\cup \Sg)^*$ is a word. 
We may assume that every variable $A\in V$ appears in some derivation 
$$S \RAS*P \gam A \del  \RAS*P w \in \Sg^*.$$
(If there is no such derivation, we may remove $A$ from the grammar.)
Now, the canonical \homo{}s $\Sg^* \to F(\Sg) \to F(V \cup \Sg) \to  F(V \cup \Sg) /P$ yield an isomorphism:
$$G = \Sg^*/\set{u = 1} {u\in L_G} \to F(V \cup \Sg) /P.$$ 
This fact has a straightforward verification. It has been generalized to other languages and grammar types leading  to the notion of \emph{Hotz-isomorphism}. We refer to \cite{dm89rairo} for details and some open problems in this area.

\subsubsection{Quasi-isometric sections}\label{sec:qis}
This section yields a direct construction 
of  a context-free grammar (in Chomsky normal form) associated to 
a f.g.~virtually free group. Thus, we do not rely on any formal definition 
for a push-down automaton or the result that the accepted language of 
push-down automaton is always context-free. This is standard fact in formal language theory, but its proof is non-trivial. So we prefer to circumvent these constructions. 
We shall use the fact that virtually free groups have a presentation with a \emph{quasi-isometric section} as defined below. In \cite{BridsonG93} Bridson and Gilman introduced quasi-isometric sections as broomlike combings and proved that the groups with quasi-isometric sections are exactly the virtually free groups.

Throughout this section we assume that $G$ is finitely generated and 
$\pi: \Sg^* \to G$ refers to a a monoid presentation. 
This means $\Sg$ is a 
finite alphabet and $\pi$  is 
a surjective \homo. 
%For a moment we let $\GG= \cC(G, \Sg)$ denote the 
%directed Cayley graph with $V(\GG) =G$ and  $E(\GG) = G \times \Sg.$ 
%For a directed edge $e = (g,a)$ we let $s(g,a) = g$ its source and $t(g,a) = ga$
%its target, where, by abuse of language, we simply write $ga$ for $g\pi(a)$. The letter $a$ is the \emph{label} of the edge $e$. Since $\pi$ is a monoid presentation, $\GG$ is strongly connected, and for every edge $e= (g,a)$ there is directed path {}from $ga$ to $g$ whose length is bounded by some constant. 
%
By abuse of language, we simply write $ga$ for $g\pi(a)$.
The set of words $\Sg^*$ forms a tree. The empty word 
$\ew$ is the root and a word $u$ has the children $ua$ for letters $a \in \Sg$. The geodesic distance $d(u,v)$ in the tree $\Sg^*$ yields a natural metric on $\Sg^*$. That means, we have $d(u,v) =d$ 
\IFF $d = \abs {u'} + \abs {v'}$ where $u=pu'$ and  $v=pv'$ and $p$ is the longest common prefix of $u$ and $v$. 
We are interested in sections of $\pi$ which define quasi-isometric embeddings 
of the Cayleygraph of $G$ (w.r.t.{} $\pi$) into the tree $\Sg$. 
We abbreviate this as a quasi-isometric section and use  the 
following definition. 
A \emph{quasi-isometric section} of $G$ is a mapping  $\sig: G \to \Sg^*$ such that 
\renewcommand{\labelenumi}{(\arabic{enumi})}
\begin{enumerate}
\item we have $\sig(1) = \ew$,
\item  we have  $\pi(\sig(g)) = g$ for all $g \in G$, 
\item there is some $1 \leq k \in \N$ such that  $d(\sig(g), \sig(ga)) \leq k$ for all 
$g \in G$ and $a \in \Sg$. %edges $(g,a) \in E(\GG)$.
\end{enumerate}
\renewcommand{\labelenumi}{\arabic{enumi}.}

Note that $\sig(G)$ yields a set of \emph{normal forms} with  $\ew \in \sig(G)$.
The important property is however that
vertices $g$, $h$ of distance $d$ in the Cayley graph of $G$ (w.r.t.{} $\pi$) have representing words of distance at most $kd$ in the tree $\Sg^*$. 

The existence of a quasi-isometric section depends only on the group $G$ and not on its
presentation $\pi: \Sg^* \to G$: Indeed, 
let  $\sig: G \to \Sg^*$ be a quasi-isometric section of $G$ and $\pi': \Sg'^* \to G$
be another monoid presentation. Then we find a \homo $\tau: \Sg^* \to \Sg'^*$
such that $\pi(w) = \pi'(\tau(w))$ for all words $w \in \Sg^*$. 
Now, the  set of normal forms $\sig(G)$ is mapped onto the 
set of normal forms $\tau(\sig(G))$ satisfying (1) and (2). Moreover, consider 
$u=pu'$ and  $v=pv'$ with $\abs {u'} + \abs {v'} \leq k$. Then there is some constant
$\ell$ (depending only on $\tau$) such that $\abs {\tau(u')} + \abs {\tau(v')} \leq k \ell$.
This shows (3) for $\tau \circ \sigma: G \to \Sg'^*$. %$\pi': \Sg'^* \to G$.
Thus, we can say that $G$ has a
quasi-isometric section.

%Choosing $\Sg = X \cup X^{-1} \cup R$
%and $k$ as above. This means 

It follows from \prettyref{sec:dcf} that f.g.~virtually groups 
have quasi-isometric sections. 

Now, let $G$ have a quasi-isometric section $\sig: G \to \Sg^*$  for some monoid presentation
$\pi: \Sg^* \to G$. We let $k \geq 1$ such that  $d(\sig(g), \sig(ga)) \leq k$ for all $(g,a) \in G \times \Sg$.
We are going to define a context-free grammar for the language
$L_G = \set{w\in \Sg^*}{ \pi(w) = 1}$. The grammar will be in Chomsky normal form.
First we choose a symbol $S$ (which is outside of $G\cup \Sg^*$) as axiom, then we let 
$$V = \os{S} \cup \set{g\in G}{\abs{\sig(g)} \leq k}.$$

Thus, the set of variables consists of the axiom $S$ and a finite subset of $G$.
We have the following set $P$ of rules: 
\begin{enumerate}
\item $S \to \ew$ is the so-called \emph{$\eps$-rule} in order to produce the empty word.
\item $S \to a$ for all $a \in \Sg$ such that $\pi(a) = 1$. 
\item $S\to BC$ for all $B,C \in V\cap G$ such that $1 = BC$ in $G$.
\item $A\to BC$ for all $A,B,C \in V\cap G$ such that $A = BC$ in $G$.
\item $A\to a$ for all $A\in V\cap G$ and all $a \in \Sg$ such that $A = \pi(a)$ in $G$.\end{enumerate}

It is clear that whenever $S\RAS{*}{P}w \in \Sg^*$, then we have $\pi(w)=1$.  
Now we show the converse. 
For words $u,v\in  \Sg^*$ we denote by $[u^{-1}v]$ the group element 
$\pi(u)^{-1}\pi(v)$ in $G$. Thus, $[u^{-1}v]$ is a short hand for the expression $\pi(u)^{-1}\pi(v)$.

Now, let $w=a_1\cdots a_n$ with $a_i \in \Sigma$ and 
$\pi(w) =1$. We have to show that there is a derivation $S\RAS{*}{P} w$. 
The first two types of the rules in $P$ show that this is true if $n \leq 1$. 
Hence we may assume $n \geq 2$. 
Let us define words $u_i = a_1 \cdots a_i \in \Sg^*$ for $0 \leq i  \leq n$. 
Then we have $\pi(u_0) = \pi(u_n) = 1$.
Note that $[u_{i-1}^{-1}u_i] = \pi(a_i)$ for all $1\leq i  \leq n$. In particular, 
$[u_{i-1}^{-1}u_i] \in V \cap G$ for all $1\leq i  \leq n$ because 
$\abs{\sig(\pi(a_i))} \leq k $ for $1 \leq i  \leq n$ by the choice of $k$.
We have rules $[u_{i-1}^{-1}u_i] \to a_i$ 
for all $1\leq i  \leq n$ and it remains to show that there is some derivation
$$S \RAS*P [u_{0}^{-1}u_1] \cdots [u_{n-1}^{-1}u_n].$$

Now let $u_0, \ldots u_n$ be any sequence of words words $u_i\in \Sigma^*$ such that $n \geq 2$, 
$\pi(u_0) = \pi(u_n) = 1$ and 
$[u_{i-1}^{-1}u_i] \in V$ 
for $1\leq i \leq n$. We are going to show that this already implies that there is a derivation $S \RAS*P [u_{0}^{-1}u_1] \cdots [u_{n-1}^{-1}u_n]$.
For $n = 2 $ we have a rule 
$$S \to [u_{0}^{-1}u_{1}][u_{1}^{-1}u_{2}].$$
Hence, we may assume $n \geq 3$ and we use induction. 
As $n \geq 2$ we may choose and fix some index $m$ with 
$0 < m  < n$ such that $\abs{\sig(\pi(u_m))}$ is at least as large as any other 
$\abs{\sig(\pi(u_i))}$ for $0 \leq i  \leq n$.
It follows 
$$\abs{\sig([u_{m-1}^{-1}u_{m+1}])} \leq \max\os{\abs{\sig([u_{m-1}^{-1}u_{m}])}, \abs{\sig([u_{m}^{-1}u_{m+1}])} } \leq k.$$ 
The set $P$ includes a rule 
$$[u_{m-1}^{-1}u_{m+1}] \to [u_{m-1}^{-1}u_{m}][u_{m}^{-1}u_{m+1}].$$ 
Now we are done since by induction 
$$S \RAS*P [u_{0}^{-1}u_1] \cdots [u_{m-1}^{-1}u_{m+1}] \cdots [u_{n-1}^{-1}u_n].$$
% Note that the fact $n\geq 3$ serves only to guarantee that the sequence 
% $[u_{0}^{-1}u_1], \ldots, [u_{m-1}^{-1}u_{m+1}], \ldots, [u_{n-1}^{-1}u_n]$
% has length at least 2. 

\subsubsection{Cayley graphs of context-free groups have finite treewidth}
Muller and Schupp have shown that a Cayley graph of a context-free group has a $k$-triangulation \cite{ms83}. The definition of a $k$-triangulation is technical. We skip it here because the proof  in \cite{ms83} can also be used to 
show directly that  a Cayley graph of a context-free group has finite treewidth. 
This suffices for our purposes. 
\begin{proposition}
Let $\Gamma$ be a Cayley graph of a context-free group $G$
with respect to a finite generating set $X$. Then $\Gamma$ has finite treewidth. 
\end{proposition}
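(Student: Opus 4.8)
The plan is to recover the triangulation idea of \cite{ms83} in the language of tree decompositions, working directly with a context-free grammar for $L_G$. Fix the generating set so that $\Sg = X \cup X^{-1}$ and let $\pi\colon \Sg^* \to G$ be the canonical surjection, so that a word $w\in \Sg^*$ reads as a path from $1$ to $\pi(w)$ in $\Gamma$, which is closed exactly when $w \in L_G = \set{w\in\Sg^*}{\pi(w)=1}$. Since $G$ is context-free, I may fix a grammar $(V,\Sg,P,S)$ in Chomsky normal form generating $L_G$ in which every variable is both reachable and productive (useless variables are deleted without changing the generated language).

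First I would prove a stability lemma: every variable $A\in V$ represents a well-defined group element $\gamma_A\in G$, meaning $\pi(u)=\gamma_A$ whenever $A \RAS*P u$. Indeed, fixing one context $S\RAS*P x A y$ with $x,y\in\Sg^*$, any words $u,u'$ with $A\RAS*P u$ and $A\RAS*P u'$ give derivations $S\RAS*P xuy$ and $S\RAS*P xu'y$, so $xuy,xu'y\in L_G$ and hence $\pi(u)=\pi(x)^{-1}\pi(y)^{-1}=\pi(u')$; the same computation shows independence of the chosen context. As $V$ is finite, the constant $K=\max_{A\in V} d(1,\gamma_A)$ is finite.

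The heart of the argument is that a derivation tree acts as a triangulation of bounded mesh. Given $w=a_1\cdots a_n\in L_G$, write $u_t=a_1\cdots a_t$ and regard $\pi(u_0),\dots,\pi(u_n)$ as the loop traced by $w$. An internal node $\nu$ of the binary derivation tree applying a rule $A\to BC$ and spanning the factor $a_{i+1}\cdots a_j$ with split point $m$ has three ``corners'' $\pi(u_i),\pi(u_m),\pi(u_j)$; since $\pi(u_m)=\pi(u_i)\gamma_B$, $\pi(u_j)=\pi(u_m)\gamma_C$ and $\gamma_A=\gamma_B\gamma_C$, left-invariance of the word metric forces all three pairwise distances to be at most $K$. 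Thus $\nu$ determines a triangle of diameter $\le K$, and the derivation tree exhibits $w$ as a loop triangulated by such triangles with adjacency governed by the tree. Connecting each corner triple by fixed geodesics of length $\le K$ produces a bag $X_\nu$ of uniformly bounded size (local finiteness of $\Gamma$ enters here), and the derivation tree equipped with the bags $X_\nu$ is a tree decomposition of the finite subgraph swept out by $w$.

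It remains to globalise this to all of $\Gamma$, which I expect to be the main obstacle. I would fix a single geodesic combing, choosing for every $g\in G$ a geodesic word $\ell_g$ with $\pi(\ell_g)=g$, and form the geodesic spanning tree $\mathcal T$ on $G$. For each edge $\os{g,ga}$ of $\Gamma$ the loop $\ell_g\, a\, \ell_{ga}^{-1}$ lies in $L_G$, and its triangulation joins the two geodesics by rungs of length $\le K$; because the combing is fixed, the triangulations attached to different edges agree along the geodesics they share. I would then take $\mathcal T$ (suitably subdivided) as the underlying tree, letting the bag at $g$ consist of the bounded set of vertices on the rungs incident to $\ell_g$ near $g$. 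Bounded mesh together with local finiteness keeps every bag of uniformly bounded size, so it suffices to verify (T1)--(T3). Conditions (T1) and (T2) are immediate, since every vertex and edge lies on some geodesic or some rung. The delicate point is (T3): each vertex must occupy a connected subtree of $\mathcal T$, which I would deduce from the fact that a vertex is touched only by rungs at comparable geodesic levels and hence over an interval of $\mathcal T$. This produces a tree decomposition of $\Gamma$ whose width is bounded in terms of $K$ and the maximal degree of $\Gamma$, so $\Gamma$ has finite treewidth.
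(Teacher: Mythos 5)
Your first two steps are sound, and they are exactly the mechanism the paper (and Muller--Schupp) use: in a reduced Chomsky normal form grammar every variable $A$ determines a well-defined group element $\gamma_A$, and each internal node of a derivation tree of a word in $L_G$ yields a triangle in $\Gamma$ of diameter at most $K=\max_{A}d(1,\gamma_A)$. The gap is in the globalisation, and it is not a technicality. First, the claim that ``the triangulations attached to different edges agree along the geodesics they share'' has no justification: a derivation tree of $\ell_g\,a\,\ell_{ga}^{-1}$ is not canonical, and nothing forces the rungs it attaches to $\ell_g$ to coincide with those coming from the loop $\ell_g\,b\,\ell_{gb}^{-1}$ of a different edge. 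Second, and more seriously, your argument for (T3) is a non sequitur: nodes of $\mathcal{T}$ ``at comparable geodesic levels'' do not form an interval of $\mathcal{T}$ --- they are spread over all branches at that level. A vertex $v$ will in general lie on rungs attached to geodesics $\ell_g$ and $\ell_{g'}$ sitting in different branches of $\mathcal{T}$, and then (T3) forces $v$ into every bag along the $\mathcal{T}$-path from $g$ to $g'$, which passes through their common ancestor and can be arbitrarily long. Repairing this by closing each set $\{t : v\in X_t\}$ under $\mathcal{T}$-convex hulls keeps the bags bounded only if $\Gamma$-adjacent vertices are uniformly close in $\mathcal{T}$, i.e.\ only if your geodesic spanning tree is a quasi-isometry of $\Gamma$. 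That statement is essentially the quasi-isometric-section characterisation of virtually free groups (Bridson--Gilman, discussed in the paper), so at the critical step you are assuming something as strong as the theorem being proved, without an argument.

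The paper avoids all of this by not using any combing: its underlying tree is the canonical tree whose level-$n$ nodes are the connected components $C$ of $\Gamma - B_n$ ($B_n$ the ball of radius $n$ around $1$), with bags the vertex boundaries $\beta C$; here (T1)--(T3) hold for structural reasons, independently of the grammar. The grammar enters only once, to bound $\mathrm{diam}(\beta C)$: for $g,h\in\beta C$ one triangulates the closed path (geodesic $1\to g$)(path $g\to h$ inside $C$)(geodesic $h\to 1$), and since every vertex $y$ of the middle path satisfies $d(1,y)\geq n+1\geq d(1,g)$, the triangle $x,y,z$ produced by the derivation gives $d(g,x)\leq d(x,y)$ and $d(h,z)\leq d(z,y)$, whence $d(g,h)\leq 3k$ and the bags are uniformly finite by bounded degree. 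If you want to salvage your write-up, the cleanest fix is to abandon the geodesic spanning tree and run your (correct) triangle lemma inside this component tree, where the choice of a path through $C$, far from the origin, is precisely what replaces your problematic consistency and (T3) claims.
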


\begin{proof}If $G$ is finite, then the assertion is trivial. Hence, let $G$ be infinite. 
We may assume that $1 \notin X\sse G$.
  
The vertex set of $\GG$ is the group $G$, by $B_n$ we denote the ball of radius 
$n$ around the origin $1 \in G$. Hence $B_n= \set{g \in G}{d(1,g)\leq n}$. 
We are heading for a tree decomposition where certain finite subsets 
of $G$ become nodes in the tree. 
For $n \in \N$ we define sets $V_n$  of level $n$ such that $V_0 = \smallset{\GG-1}$ and 
$V_n = \set{C }{ C \text{ is a connected component of } \Gamma - B_{n}}$ for $n \geq 1$. This defines a tree $T$ with root $B_1$ as follows: 
\begin{align*}
%V(T) &= \bigcup \set{ \beta C }{ C \in V_n\,,  n \in \N}. \\
V(T) &=  \set{ \beta C }{ C \in V_n\,,  n \in \N}. \\
 E(T) &= \set{\smallset{ \beta C, \beta D}}{D \sse C \in V_n\,,  D \in V_{n+1}, \, n \in \N}
 \end{align*}
The nodes are subsets of $G$, hence we can identify nodes $t \in T$ with their bags $X_t \sse G$. %Note that the degree of a node $t = \bet C$ 
%is bounded by $\abs X \cdot ( \abs {\bet C}-1)$. 
If $\{g,h\}$ is an edge in the Cayley graph $\GG$, then 
there are essentially two cases; either  $d(1,g) = n $ and $d(1,h) = n +1$ 
or $d(1,g) = d(1,h) = n +1$ for some $n$. In both cases the elements  $g$, $h$ are in some bag $\bet C$ for some $C \in V_n$ and $n \in \N$. 

It remains to show that $\abs{\bet C}$ is bounded by some constant for all $C \in V_n$, $n\in\N$. It is here where
the context-freeness comes into the play. 
We denote 
$\Sg= X \cup X^{-1}$. This  is a set of monoid generators of $G$. We let 
$L_G = \set{w \in \Sg^*}{w = 1 \in G}$ its associated group language. 
By hypothesis, $L_G$ is generated by some context-free grammar $(V,\Sg, P,S)$, and we 
may assume that it is in Chomsky normal form. This means all rules are either of the form $A \to BC$ with $A,B,C \in V$ or of the form $A\to a$ with $A\in V$ and 
$a \in \Sg^*$ such that $\abs a \leq 1$. We write $A \RAS*{P} \alp$, 
if we can derive $\alp\in (V \cup \Sg)^*$ with production rules from $P$. 
We define a constant $k \in \N$, $k \geq1$ such that 
$$k \geq \max_{A \in V}\min\set{ \abs{w}}{ A \RAS*{P} w \in \Sg^*}.$$ %\text{ for all $$}

Consider $C\in V_n $ and $n \in \N$.
Let  $g,h\in \bet C$. We are going to show that $d(g,h)\leq 3k$.
For $n = 0$ we have  $\bet C = B_1$.  Hence, we may assume $n \geq 1$. 

Let $\alpha$ be a geodesic path from $1$ to $g$ with label $u \in \Sg^*$, $\gamma$ a geodesic path from $h$ to $1$ with label $w\in \Sg^*$, and $\beta$  some  path from  $g$ to $h$ with label $v \in \Sg^*$ which is entirely contained in $C$. Such a path exists since $C$ is connected.
The composition of these paths forms a closed path $\alpha \beta\gamma$ with label 
$uvw$. We have $uvw \in L_G$ and there is a derivation $S\RAS{*}{}uvw$. 
We may assume that $\abs v \geq 2$ because otherwise there is nothing to do. 

Since the grammar is in Chomsky normal form we can find  a rule $A \to BC$ and derivations as follows: 
$$S \RAS*{P} u'Aw' \RAS{}{P} u'BCw'\RAS*{P} u'v'v''w' = uvw $$
such that $B \RAS*{P} v'$, $C \RAS*{P} v''$, and  $\abs{u'} \leq  \abs{u} < \abs{u'v'} < \abs{uv } \leq\abs{u'v'v''}$. 

This yields three nodes $x\in \alpha$, $y \in \beta$, and  $z\in \gamma$ such that $d(x,y)$, $d(y,z)$, $d(x,z)\leq k$. (These three nodes correspond exactly to a triangle with endpoints $x,y,z$ in the $k$-triangulation of the closed path $\alpha \beta\gamma$ in \cite{ms83}.)

\begin{figure}[ht]
\begin{center}
\def\nodedist{5}
\begin{tikzpicture}
\node[] (OO) at (-5, 0) {};
\node[left of=OO,node distance=\nodedist] ()  {$1$};

\node[] (g) at (2, 2) {};
\node[] (h) at (2, -2) {};
\node[above of=g,node distance=\nodedist] () {$g$};
\node[below of=h,node distance=\nodedist] ()  {$h$};

\draw (2,-2) arc(-60:60:2.309401076758503058);
\node (vv) at (2+1.154700538379251529,0){};
\node[right of=vv,node distance=\nodedist] ()  {$\beta$};

\draw (-5, 0) ..controls(-3.75,0.12).. (-3,0.2)
	..controls(-1.5,0.36) and (0,1)..
node[above] (){$\alpha$} (1, 1.46)
..controls(1.5,1.69)..
(2, 2){};

\draw (-5, 0) ..controls(-3.75,0.12).. (-3,0.2)
	..controls(-2.25,0.28) and (-1.,0.6) ..
node[below] (){$\gamma$} (1, -1)
..controls(1.5,-1.4)..
(2, -2){};

\node[] (x) at (1, 1.46) {};
\node[] (z) at (1, -1) {};
\path[] (2-1.154700538379251529,0) -- ++(20:2.309401076758503058) node (y){};
%\node[] (y) ++  {};
\node[above of=x,node distance=\nodedist] () {$x$};
\node[below of=z,node distance=\nodedist] ()  {$z$};
\node[right of=y,node distance=\nodedist] ()  {$y$};

\draw(1, 1.46)--node[left](){$A$} (1, -1);
\draw (2-1.154700538379251529,0) + (20:2.309401076758503058)--node[above](){$C$} (1, -1);
\draw(2-1.154700538379251529,0) + (20:2.309401076758503058)--node[below](){$B$} (1, 1.46);

% \draw (OO) --node[above] (){$u$} (g){};
% \draw (OO) --node[below] (){$w$}  (h){};
\end{tikzpicture}
\end{center}
\caption[]{The distance between $g$ and $h$ is bounded by $3k$.}\label{fig:diamC}
\end{figure}
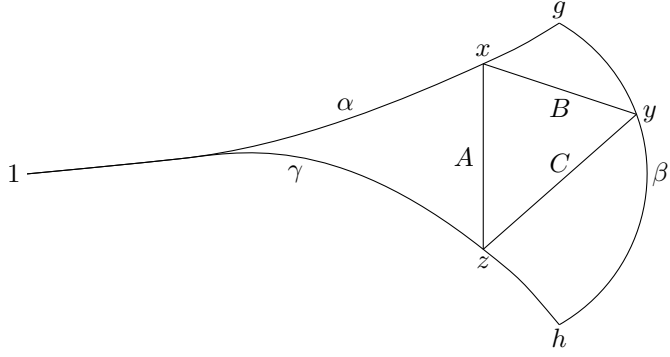

Now we have:
\[d(x,g)=d(1,g)-d(1,x) \leq d(1,y)-d(1,x)\leq d(x,y).\]
The first equality holds because $\alpha$ is geodesic and  $x$ lies on $\alpha$; the second one because $ d(1,g) \leq n+1 \leq  d(1,y)$. Likewise we obtain $d(z,h)\leq d(z,y)$.
Thus, it follows
\begin{align*}
d(g,h)&\leq d(g,x)+d(x,z)+ d(z,h)\\
&\leq d(y,x)+d(x,z)+d(z,y) \leq 3k.
\end{align*}
This implies that the size of the bags is uniformly bounded by some constant since $\Gamma$ has uniformly bounded degree.
\end{proof}

\subsection{The result of Muller and Schupp revisited}\label{sec:ms}
To date various equivalent characterizations of context-free groups are 
known. The following theorem mentions only those characterizations which we met 
in this paper for proving the fundamental result of Muller and Schupp that
context-free groups are virtually free.

\begin{theorem}\label{thm:ms}
Let $G$ be a finitely generated group and $\GG$ be its Cayley graph with respect to some finite set of generators. The following assertions are equivalent. 

\begin{enumerate}
\item\label{thm:msi} $G$ is virtually free.
\item\label{thm:msii} $G$ is deterministic context-free. 
\item\label{thm:msiii} $\GG$ has a quasi-isometric section. 
\item\label{thm:msiv} $G$ is context-free. 
\item\label{thm:msv} $\GG$ has finite treewidth. 
\item\label{thm:msvi} The group $G$ is the fundamental group of a finite graph of finite groups.
\end{enumerate}
\end{theorem}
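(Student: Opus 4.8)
The plan is to assemble the equivalence from the constructions already carried out in the preceding subsections, organising them as a single cycle of implications through which the one remaining condition is threaded; essentially no new argument is needed beyond bookkeeping. Concretely, I would prove the cycle
\[
\ref{thm:msi}\Rightarrow\ref{thm:msii}\Rightarrow\ref{thm:msiv}\Rightarrow\ref{thm:msv}\Rightarrow\ref{thm:msvi}\Rightarrow\ref{thm:msi}
\]
and then close the loop on condition~\ref{thm:msiii} by showing $\ref{thm:msi}\Rightarrow\ref{thm:msiii}\Rightarrow\ref{thm:msiv}$.

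For the main cycle I would argue as follows. The implication $\ref{thm:msi}\Rightarrow\ref{thm:msii}$ is exactly the content of \prettyref{sec:dcf}, where a deterministic pushdown automaton computing normal forms is built from a free subgroup of finite index, so the word problem is deterministic context-free. Since every deterministic context-free language is context-free, $\ref{thm:msii}\Rightarrow\ref{thm:msiv}$ is immediate (cf.~\prettyref{sec:agcfg}). The step $\ref{thm:msiv}\Rightarrow\ref{thm:msv}$ is the proposition showing that the Cayley graph of a context-free group has finite treewidth, obtained by bounding the diameter of the vertex-boundaries $\beta C$ of the components $C$ of $\GG-B_n$ through the Chomsky-normal-form triangulation of closed paths. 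For $\ref{thm:msv}\Rightarrow\ref{thm:msvi}$ I would invoke \prettyref{cor:nixstall}: the group $G$ acts on its own Cayley graph $\GG$ freely, so all vertex stabilizers are trivial (in particular finite) and $G\bs\GG$ is a single vertex; since $\GG$ has finite treewidth by~\ref{thm:msv}, the corollary yields that $G$ is the fundamental group of a finite graph of finite groups. Finally $\ref{thm:msvi}\Rightarrow\ref{thm:msi}$ is the classical fact \cite{Karrass73} that such fundamental groups are virtually free.

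It then remains to splice in condition~\ref{thm:msiii}. The implication $\ref{thm:msi}\Rightarrow\ref{thm:msiii}$ follows from \prettyref{sec:dcf} together with \prettyref{sec:qis}, where it is observed that the normal-form computation of a f.g.\ virtually free group furnishes a quasi-isometric section. Conversely $\ref{thm:msiii}\Rightarrow\ref{thm:msiv}$ is the explicit construction of a context-free grammar in Chomsky normal form for $L_G$ out of a quasi-isometric section, carried out in \prettyref{sec:qis}. Since \ref{thm:msiv} already lies on the cycle, all six conditions become equivalent.

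I do not expect a genuine obstacle at the level of this theorem itself, since each arrow reduces to a result established earlier. The single arrow carrying real mathematical weight is $\ref{thm:msv}\Rightarrow\ref{thm:msvi}$, whose engine is \prettyref{cor:nixstall} and hence \prettyref{thm:new_alpha}; the only point requiring care there is the (routine) verification that $G$ acts on its Cayley graph with a single orbit and trivial, hence finite, stabilizers, so that the hypotheses of \prettyref{cor:nixstall} are genuinely met.
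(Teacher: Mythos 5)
Your proposal is correct and follows essentially the same route as the paper: the identical set of arrows \ref{thm:msi}$\Rightarrow$\ref{thm:msii}$\Rightarrow$\ref{thm:msiv}, \ref{thm:msi}$\Rightarrow$\ref{thm:msiii}$\Rightarrow$\ref{thm:msiv}$\Rightarrow$\ref{thm:msv}$\Rightarrow$\ref{thm:msvi}$\Rightarrow$\ref{thm:msi}, each discharged by the same prior results (\prettyref{sec:dcf}, \prettyref{sec:qis}, the finite-treewidth proposition, \prettyref{cor:nixstall}, and \cite{Karrass73}). Your added remark that the left-multiplication action of $G$ on $\GG$ is free with one vertex orbit, so the hypotheses of \prettyref{cor:nixstall} hold, is a correct (and welcome) explicit verification the paper leaves implicit.
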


\begin{proof}
 A review on the implications 
 \ref{thm:msi}$\implies$\ref{thm:msii}, 
 \ref{thm:msi}$\implies$\ref{thm:msiii}, 
 \ref{thm:msii}$\implies$\ref{thm:msiv}, and \ref{thm:msiii}$\implies$\ref{thm:msiv}$\implies$\ref{thm:msv} has been given  in this section.
The implication  \ref{thm:msv}$\implies$\ref{thm:msvi} is 
 a direct consequence of \prettyref{cor:nixstall}.
 The last implication \ref{thm:msvi}$\implies$\ref{thm:msi}
  follows {}from \cite{Karrass73}.
 \end{proof}

%%%%%%%%%%%%%%%%%%%%%%%%%%%%%%%%%%%%%%%%%%%%%%%%%%%%%%%%%%%%%%%%%%%%%%%%%
\section{Accessibility of groups}\label{sec:tw}
In this section we assume all groups to be finitely generated.
As another application of the construction in \prettyref{sec:cuts} and \prettyref{sec:blocks} we give a proof of a theorem of Thomassen and Woess \cite[Thm. 1.1]{ThomassenW93}. 
It is  an important corollary of  \cite[Thm.{} II 2.20]{DicksD89} where Dicks and Dunwoody develop their the structure tree theory.
This result allows us to consider all groups which act on a locally finite, connected, accessible graph with finite stabilizers and finitely many orbits, and not only those which act on graphs of finite treewidth. The result in \cite{ThomassenW93} gave birth to the notion of accessibility for graphs.

We need some standard facts of Bass-Serre theory. The following lemma 
is well-known, see e.g. \cite{dicks80}. For convenience of the reader, we give a proof. 

\begin{lemma}\label{lm:fg}
Let $G$ be a f.g.~fundamental group of a finite graph of groups with finite 
edge groups. Then every vertex group is finitely generated.
\end{lemma}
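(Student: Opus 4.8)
The plan is to realize $G$ via Bass--Serre theory and then run a finite-generation bookkeeping argument in which the only potentially infinite data that could cause trouble---the edge groups---are harmless precisely because they are finite. Write $G=\pi_1(\mathcal{A},Y)$ for a finite graph of groups $(\mathcal{A},Y)$ with vertex groups $G_v$, finite edge groups $G_e$, and edge monomorphisms $\alpha_e\colon G_e\to G_{t(e)}$; since $Y$ is finite there are only finitely many stable letters $t_e$, and $G$ is generated by $\bigcup_v G_v$ together with these $t_e$. First I would fix a finite generating set of $G$ and expand each generator into a word in the vertex groups and the stable letters. Collecting the finitely many vertex-group syllables that occur gives, for every $v$, a \emph{finite} subset $F_v\subseteq G_v$.

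Next I would pass to a sub-graph of groups. To each vertex I attach the subgroup $G_v^\circ=\langle F_v\cup\bigcup_{e\colon t(e)=v}\alpha_e(G_e)\rangle\le G_v$. This is finitely generated: $F_v$ is finite by construction, and each $\alpha_e(G_e)$ is finite because $G_e$ is finite---this is the single point where the hypothesis on edge groups enters. Since $G_v^\circ$ contains the images of all incident edge groups, the data $(\mathcal{A}^\circ,Y)$ with the same underlying graph, the same edge groups $G_e$, and the same maps $\alpha_e$ is again a genuine graph of groups; let $G^\circ=\pi_1(\mathcal{A}^\circ,Y)$ and let $\theta\colon G^\circ\to G$ be the homomorphism induced by the inclusions $G_v^\circ\hookrightarrow G_v$. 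By the choice of the $F_v$ and the $t_e$, the image of $\theta$ contains a generating set of $G$, so $\theta$ is surjective.

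The heart of the argument is to upgrade surjectivity of $\theta$ to the equality $G_v^\circ=G_v$ for every $v$, and this is where I expect the real work to lie. I would invoke the normal form theorem of Bass--Serre theory: an element of the base vertex group $G_v\le G$ is represented by a reduced expression whose underlying closed path in $Y$ is trivial. Given $a\in G_v$, write $a=\theta(x)$ and put $x$ into reduced form in $\mathcal{A}^\circ$. Because $\mathcal{A}^\circ$ and $\mathcal{A}$ share the \emph{same} edge groups, the reducedness criterion (absence of a pinch $t_e\,g\,t_e^{-1}$ with $g\in\alpha_e(G_e)$) is literally identical in both, so $\theta$ carries reduced expressions to reduced expressions with the same underlying path. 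Uniqueness of the underlying path then forces that path to be trivial, whence $x$ already lies in $G_v^\circ$ and $a=\theta(x)\in G_v^\circ$. Thus $G_v=G_v^\circ$ is finitely generated. The main obstacle is exactly this normal-form step: one must verify that reducedness and the underlying loop are preserved under $\theta$, which relies on the edge subgroups of $\mathcal{A}^\circ$ and $\mathcal{A}$ being identical rather than merely isomorphic, and it is the finiteness of the edge groups that lets $G_v^\circ$ remain finitely generated while still containing all the edge-group images.
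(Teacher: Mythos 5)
Your proof is correct, but it takes a genuinely different route from the paper's. The paper's argument (given there only as a sketch) is purely word-combinatorial: fix generating sets $X_v$ of the vertex groups, let $Y$ be the finite edge set and $Z$ the union of all edge groups, which is finite precisely because the edge groups are; since $G$ is finitely generated one can choose a finite generating set $X \subseteq \bigcup_v X_v \cup Y \cup Z$ with $Y \cup Z \subseteq X$. A \emph{shortest} word over $X$ representing a given $x \in G_v$ must then be Britton-reduced, because any pinch $y z y^{-1}$ (with $y \in Y$ and $z$ in the corresponding edge group) could be replaced by a \emph{single} letter $z' \in Z \subseteq X$, strictly shortening the word; the normal form theorem then forces all letters of that shortest word to lie in $G_v$, so the finite set $X \cap G_v$ generates $G_v$. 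You instead build an auxiliary graph of groups $\mathcal{A}^\circ$ with the same underlying graph and literally the same edge groups, but with the finitely generated vertex groups $G_v^\circ$, and promote surjectivity of $\theta\colon G^\circ \to G$ to the equality $G_v^\circ = G_v$ by observing that $\theta$ carries reduced expressions to reduced expressions. Both proofs rest on the same two pillars --- the normal form theorem for graphs of groups, and the possibility of absorbing all edge-group elements into a finite generating structure --- but each packaging buys something. The paper's version needs no second graph of groups or induced homomorphism, which keeps it to a few lines. Your version isolates the use of normal forms in one clean statement (reducedness is preserved because the edge-group images are the same subsets in $\mathcal{A}^\circ$ and $\mathcal{A}$) and is strictly more robust: it goes through verbatim when the edge groups are merely \emph{finitely generated} rather than finite, since $G_v^\circ$ stays finitely generated, whereas the paper's shortening trick genuinely needs $Z$ to be finite --- with infinite edge groups the replacement $y z y^{-1} \mapsto z'$ trades one word of uncontrolled length for another and need not shorten anything.
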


\begin{proof}
We give a sketch only.
 Let  $V$ be the set of vertices, $Y$ be the  set of edges
 of the finite graph, and $Z$ be the union over all edge groups. For each 
 vertex $v \in V$ let $X_v$ be some generating set of the vertex group $G_v$. 
 Then there is a finite generating set $X$ inside $\bigcup\set{X_v}{v \in V}\cup  Y \cup Z$ such that $Y \cup Z \sse X$. Now consider any $x \in X_v$, it is enough to 
 show that $x$ can be expressed as  a product over $X\cap X_v$. 
 To see this, write $x$ as shortest word in $X$. Assume this word contained a factor
 $y z y^{-1}$ with $y \in Y$ and where $z$ belongs to edge group of $y$ sitting in $G_{t(y)}$, then we could perform a ``Britton reduction''  replacing  $y z y^{-1}$ by some 
 $z'$ in the edge group of $\ov y$ sitting in $G_{s(y)}$. This would lead to a shorter word, 
 since $Y \cup Z \sse X$. Hence, this is impossible; and the word representing $x$ is ``Britton reduced''. This 
 implies that the word uses letters from $X\cap X_v$, only. 
 \end{proof}

\begin{dfn}\label{dfn:gac}
\begin{enumerate}
\item A group is called  \emph{more than one ended} (resp.{} at most one ended) if its Cayley graph has more than one end (resp.{} at most one end). (This definition does not depend on the choice of the finite generating set for the Cayley graph.) 
\item A group $G$ is called \emph{accessible} if it acts on a tree with finitely many orbits, finite edge stabilizers, and vertex stabilizers with at most one end.
\end{enumerate}
\end{dfn}

If a group $G$ is accessible, then  Bass-Serre theory yields an upper bound on 
the number how often $G$ can be split properly as an HNN-extension or amalgamated product over finite subgroups. This observation is also another definition of accessibility used frequently in literature. The link to accessibility of 
the corresponding Cayley graphs is due to the next proposition.

\begin{proposition}\label{prop:gunnar}
Let $G$ be a f.g.~group which acts on a 
tree with finitely many orbits, finite edge stabilizers and no vertex stabilizer having more than one end. Then the Cayley graph $\GG$ of $G$ 
is accessible. 
\end{proposition}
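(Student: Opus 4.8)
The plan is to manufacture from the action on $T$ a family of cuts of $\GG$ of uniformly bounded weight, and then to show that this family already splits every ``cuttable'' bi-infinite simple path. Fix a finite generating set, so that $V(\GG)=G$, fix a base vertex $v_0\in V(T)$, and consider the $G$-equivariant orbit map $\phi\colon G\to V(T)$, $\phi(g)=gv_0$. It has bounded jumps: with $N=\max_{a\in\Sg^\pm}d_T(v_0,av_0)$ we get $d_T(\phi(g),\phi(ga))\le N$ for every Cayley edge, and since $G\bs T$ is finite the orbit $Gv_0$ is $D$-dense in $T$ for some $D$. For an edge $e\in E(T)$, removing $e$ splits $T$ into two half-trees; let $C_e\sse G$ be the $\phi$-preimage of one of them. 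I would first bound $\abs{\del C_e}$ uniformly: a Cayley edge $\smallset{g,ga}$ lies in $\del C_e$ only if $e$ lies on the tree-geodesic $[gv_0,gav_0]$, equivalently $g^{-1}e$ lies on $[v_0,av_0]$, which has at most $N$ edges; for each such edge $f$ the set $\set{g}{gf=e}$ is a coset of the finite stabiliser $G_f$, and there are finitely many edge orbits, so $\abs{\del C_e}\le k_0:=\abs{\Sg^\pm}\cdot N\cdot\max_f\abs{G_f}$, independently of $e$.

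Next I would record an elementary ``component extraction'' turning bounded boundary into a genuine cut. If $C\sse G$ has $\del C$ finite and $\abs{\alp\cap C}=\infty=\abs{\alp\cap\Comp C}$ for a bi-infinite simple path $\alp$, then $\alp$ crosses $\del C$ only finitely often, so one ray $\alp^{+}$ lies in $C$ and one ray $\alp^{-}$ lies in $\Comp C$. Let $F$ be the component of $\GG(C)$ containing $\alp^{+}$ and $F'$ the component of $\GG(\Comp F)$ containing $\alp^{-}$. Then $F'$ and $\Comp{F'}$ are connected (the remaining components of $\GG(\Comp F)$ all attach to the connected set $F$, as $\GG$ is connected), and $\del F'\sse\del F\sse\del C$; hence $F'\in\cC(\alp)$ has weight at most $\abs{\del C}$. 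Applied to the $C_e$, this reduces the whole statement to the following claim: for every $\alp$ with $\cC(\alp)\neq\es$ some edge (or a shrinking family of edges) separates the two tails of $\alp$, for then \prettyref{dfn:accessible} holds with $k=k_0$.

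To find such an edge I would argue by cases on $\phi(\alp)$. If no single edge separates, then for every $e$ all but finitely many $\phi(x_i)$ lie on one side $\sigma(e)$, and the directed half-trees $\sigma(e)$ determine a unique vertex $v_*$ or end $\xi_*$ of $T$. In the end case, take the edges $e_1,e_2,\dots$ along the ray to $\xi_*$; the $C_{e_n}$ shrink to $\es$ (each vertex is at finite tree-distance from $v_0$), both tails of $\alp$ eventually enter $C_{e_n}$, and I compare the components of $\GG(C_{e_n})$ meeting $\alp^{+}$ and $\alp^{-}$: if they differ for arbitrarily large $n$ the component extraction gives a $k_0$-cut in $\cC(\alp)$, while if they agree for all large $n$ then $\alp^{+}$ and $\alp^{-}$ lie in one end, contradicting $\cC(\alp)\neq\es$. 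Either way this case is settled without the stabiliser hypothesis.

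The main obstacle is the remaining vertex case, where almost all $\phi(x_i)$ stay on the $v_*$-side of every edge, so both tails of $\alp$ are ``pinned over $v_*$''; this is exactly where the at-most-one-end hypothesis must be used. I would attach the block $B=\phi^{-1}\!\left(\set{w\in V(T)}{d_T(w,v_*)\le D'}\right)$ for a suitable $D'\ge D$ and show that $G_{v_*}$ acts on $B$ with finitely many orbits: the vertices within bounded tree-distance of $v_*$ fall into finitely many $G_{v_*}$-orbits because $G\bs T$ is finite and the edge groups are finite (so the edges at $v_*$ form finitely many $G_{v_*}$-orbits), while $G_{v_*}$ is finitely generated by \prettyref{lm:fg}. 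Since $G_{v_*}$ has at most one end, arguing exactly as in the proof of \prettyref{prop:enden_in_Bc} one shows that $B$ has at most one end and that the two tails of $\alp$, being cofinal in $B$, lie in the same end of $\GG$ — contradicting $\cC(\alp)\neq\es$, so the vertex case cannot occur. I expect the real work to be in the cocompactness of the $G_{v_*}$-action on $B$, the connectivity of $B$, and the verification that the tails of $\alp$ are genuinely captured by a single block: precisely the phenomena that \prettyref{sec:blocks} already handled for the structure tree $T(\Copt)$, now transported to the given tree $T$.
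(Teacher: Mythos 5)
Your first three steps are correct and genuinely different from the paper's route (the paper passes to a graph-of-groups decomposition via Bass--Serre theory, uses \prettyref{lm:fg}, and then runs an induction in which the normal-form theorem for amalgamated products produces the uniform bound): the bound $\abs{\del C_e}\le k_0$, the component-extraction lemma, the separating-edge case, and the end case all go through. The gap is the vertex case -- exactly where the hypothesis on vertex stabilizers must enter -- and it is not a technicality: the block you define is the wrong object. Concretely, let $G=P*_H Q$ with $H$ finite and $P,Q$ finitely generated and one-ended, let $T$ be the Bass--Serre tree ($V(T)=G/P\sqcup G/Q$, edges $G/H$), and take $v_0=1P$, so your density constant is $D=1$. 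For $v_*=1Q$ and $D'=1$ your block is $\set{g\in G}{gP\text{ adjacent to }1Q}=QP$, on which $G_{v_*}=Q$ acts by left multiplication with orbit set $Q\bs QP\cong H\bs P$, which is infinite; so the cocompactness you need fails for every admissible $D'$. Worse, for $v_*=v_0$ and $D'=2$ the block is $P\cup PQP$, which contains the pairwise disjoint one-ended cosets $pqP$ (for $q\in Q\sm H$), each joined to the rest of the block only through the finite set $pqH$: this block has infinitely many ends although $G_{v_*}=P$ is one-ended. Thus both pillars of your intended contradiction (finitely many $G_{v_*}$-orbits on the block, and the block having at most one end) are false for ball preimages, because such preimages swallow entire cosets of the neighbouring infinite vertex groups.

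There is a second, independent problem even granting a connected cocompact block: the implication you need is ``$G_{v_*}$ has at most one end $\Rightarrow$ the block has at most one end''. You cannot get this by ``arguing exactly as in the proof of \prettyref{prop:enden_in_Bc}'': that proposition concerns blocks built from optimal cuts and its proof lives entirely inside $\Copt$ (nestedness of optimal cuts), while the paper's \prettyref{thm:new2} proves precisely the converse implication (stabilizer with more than one end $\Rightarrow$ block with more than one end). The direction you need requires a separate \v{S}varc--Milnor-type argument. The honest repair of the vertex case is to abandon ball preimages and work with the subgroup $G_{v_*}$ itself: whenever consecutive vertices $x_i,x_{i+1}=x_is$ of $\alp$ have images in different components of $T-\oneset{v_*}$, then $x_i^{-1}v_*$ lies on the finite geodesic $[v_0,sv_0]$, so $x_i$ lies in one of finitely many right cosets $G_{v_*}h_w$, hence within bounded distance of $G_{v_*}\sse G$; pinning forces infinitely many such transitions on both tails, and then one connects the two tails inside a bounded neighbourhood of the one-ended (or finite) set $G_{v_*}$, avoiding the finite boundary of any alleged cut in $\cC(\alp)$. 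Making this precise is essentially the normal-form argument that the paper's own proof of \prettyref{prop:gunnar} uses. As written, your vertex case does not go through.
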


\begin{proof}Again, we give only a sketch.
Bass-Serre theory tells us that $G$ is the fundamental group of a finite graph of groups with finite edge groups. By \prettyref{lm:fg}, every vertex group $G_v$ is finitely generated. We only consider the case where
$G = A*_{H}B$ is an amalgamated product of two
f.g.~groups $A$ and $B$ over a common finite subgroup. The case of HNN-extensions follows analogously and is left to the reader.

We assume that $A$ and $B$ have accessible Cayley graphs and show that this implies that $G= A*_{H}B$ has an accessible Cayley graph. Then \prettyref{prop:gunnar} follows by induction.

Let $A$ be generated by $X_A$ and $B$ be generated by $X_B$, where $X_A$ and
$X_B$ are finite. As a generating set for $G$ we use $X = H \cup HX_AH \cup HX_BH$
and we may assume that $\GG$ is the Cayley graph of $G$ w.r.t.~$X$. We may regard the Cayley graphs of $A$ and $B$ as subgraphs of $\GG$ and refer to them as $A$ or $B$.
Now, consider any bi-infinite simple path $\alp$ in $\GG$
such that there is a cut $C$ (of finite weight) with
$\abs{C \cap \alp} =  \abs{\Comp C \cap \alp} = \infty$. We can assume that
$\abs{\del C}$ is minimal among all such cuts. %
In order to show that
$\GG$ is accessible we need a uniform bound on $\abs{\del C}$.
The path $\alp$ gives us a bi-infinite sequence of labels in $X$.
We may assume the origin $1 \in G$
is a vertex of $\alp$.
If all the labels belong to $H \cup HX_AH$, then the path is entirely in
$A$. So by hypothesis there is an upper bound on $\abs{\delta C}$.
Thus we may assume that there is at least one label in $A\sm H$ and one label in $B\sm H$ and
that $1$ is sitting between two such labels of minimal distance.
Without restriction the label on the right of $1$ belongs to $A\sm H$ and on the left it belongs to $B$.
Let $1=x_0 , x_1, x_2, \ldots$ be the one-sided infinite sequence of vertices of $\alp$ going to the right of $1$ and $\ldots, y_2, y_1, y_0 = 1$ the corresponding one on the left. For every $x\in G$ the set $HxH$ is finite.
Hence, switching to infinite subsequences  of  $x_0 , x_1, x_2, \ldots$ and $\ldots, y_2, y_1, y_0$ we may assume that no $x_{i}^{-1}x_j$ or $y_jy_{i}^{-1}$
belongs to $H$ for $i < j$. Grouping consecutive labels from $A\sm H$ (resp. $B\sm H$) into blocks we obtain  sequences
$$ h_j, \ldots, h_2, h_1 , g_1, g_2 \ldots g_i$$
such that $g_1 \in A \sm H$, $h_1 \in B \sm H$, and the $g$- and
the $h$-vertices alternate between $A \sm H$ and $ B \sm H$.
It might happen that $i$ or $j$ remains bounded, but there are sequences with
$1 \leq i,j$. The final step is to observe that
every path connecting $g_1\cdots g_i$ to $(h_j\cdots h_1)^{-1}$ must use a vertex from $H$.  This is due to the normal form theorem for amalgamated products.
\end{proof}

%\prettyref{prop:gunnar} yields one direction for the result of Thomassen and Woess.
 Now, we can state the main result of this section. We use the notation of \prettyref{sec:blocks}.

\begin{theorem}\label{thm:new2}%\cite{ThomassenW93}
Let $\Gamma$ be a locally finite, connected, accessible graph. 
Let $G$ act on $\Gamma$ such that $G\bs \Gamma$ is finite and each 
node stabilizer $G_v$ is finite. 
Then $G$ acts on the tree $T(\Copt)$ with finitely
many orbits, finite edge stabilizers, and  no vertex stabilizer has more than one end.
\end{theorem}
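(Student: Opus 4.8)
The plan is to establish that $G$ acts on $T(\Copt)$ with the three required properties by assembling results already proved in the paper, and then to verify the one-ended condition on vertex stabilizers by relating the blocks $B{[C]}$ to the Cayley graph of the stabilizer via \prettyref{prop:enden_in_Bc} and \prettyref{lm:endl_orbits_Bc}. First I would invoke \prettyref{lm:VDendl_orbits_k_cuts} to conclude that $G$ acts on $T(\Copt)$ with finitely many orbits, since the action of $G$ on $\GG$ factors through finitely many orbits (as $G\bs\GG$ is finite, $G$ is a subgroup of $\Aut(\GG)$-like action with finite quotient). The edge stabilizer condition comes directly from \prettyref{prop:lea}(1): an element fixing an edge $\{C,\Comp C\}$ permutes the finite set $\beta C$, so the edge stabilizer is virtually $\cG$ with $\cG$ the class of finite groups, hence finite-by-finite, hence finite.

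The heart of the matter is the vertex stabilizer condition. Here I would argue that the vertex stabilizer $G_{[C]}$ has a Cayley graph quasi-isometric to the block $B{[C]}$, so that the number of ends of $G_{[C]}$ equals the number of ends of $B{[C]}$. Concretely, by \prettyref{lm:blocks} the group $G_{[C]}$ acts on $B{[C]}$, by \prettyref{lm:endl_orbits_Bc} it acts with finitely many orbits, and by \prettyref{lm:VDconBc}(\ref{conBci}) the block $B{[C]}$ is connected. Moreover the vertex stabilizers of this action are finite (they are contained in the finite node stabilizers $G_v$). These are exactly the hypotheses needed to deduce that $B{[C]}$ and $G_{[C]}$ have the same number of ends: a f.g.\ group acting with finite stabilizers and finitely many orbits on a connected locally finite graph is quasi-isometric to that graph. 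The subtle point \prettyref{lm:VDconBc}(\ref{conBcii}) is precisely what guarantees that the end structure of $B{[C]}$ is intrinsic and not distorted when passing between $B{[C]}$ and $\GG$; this is why that technical strengthening was included.

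Then I would apply \prettyref{prop:enden_in_Bc}, which states that $B{[C]}$ has at most one end. Combining this with the quasi-isometry, the stabilizer $G_{[C]}$ has at most one end, which is exactly the required conclusion. The main obstacle I anticipate is making the quasi-isometry argument rigorous, or more precisely, transferring the ``at most one end'' property of the block $B{[C]}$ to the stabilizer $G_{[C]}$ without invoking heavy quasi-isometry machinery. I expect the cleanest route is to argue directly: suppose $G_{[C]}$ had more than one end, then its Cayley graph—and hence $B{[C]}$, since $G_{[C]}$ acts cocompactly with finite stabilizers on the connected graph $B{[C]}$—would admit a finite separating set leaving two infinite components, contradicting \prettyref{prop:enden_in_Bc}. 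The careful bookkeeping here, ensuring that finitely generated is automatic (which follows because $G$ is finitely generated and the block carries a cocompact action), is where I would spend the most attention.

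\begin{proof}
By \prettyref{lm:VDendl_orbits_k_cuts}, the action of $G$ on $T(\Copt)$ has only finitely many orbits. By \prettyref{prop:lea}(1), applied with $\cG$ the class of finite groups, all edge stabilizers are virtually finite, hence finite.

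It remains to show that no vertex stabilizer $G_{[C]}$ has more than one end. Fix $C\in\Copt$. By \prettyref{lm:blocks}, $G_{[C]}$ acts on the block $B{[C]}$, and by \prettyref{lm:endl_orbits_Bc} this action has only finitely many orbits. The vertex stabilizers of this action are contained in the finite node stabilizers $G_v$, hence are finite. By \prettyref{lm:VDconBc}(\ref{conBci}), $B{[C]}$ is connected, and it is locally finite as a subgraph of the locally finite graph $\GG$.

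Since $G$ is finitely generated and acts cocompactly with finite stabilizers on the connected locally finite graph $B{[C]}$, the group $G_{[C]}$ is finitely generated and its Cayley graph has the same number of ends as $B{[C]}$. Indeed, suppose $G_{[C]}$ had more than one end. Then there would be a finite set whose removal from the Cayley graph of $G_{[C]}$ leaves two infinite components; transferring this through the cocompact action with finite stabilizers, one obtains a finite subset $S\sse B{[C]}$ such that $B{[C]}-S$ has two infinite connected components. This means $B{[C]}$ has more than one end, contradicting \prettyref{prop:enden_in_Bc}. Here \prettyref{lm:VDconBc}(\ref{conBcii}) ensures that the end structure of $B{[C]}$ is not distorted, so the separation is genuine.

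Therefore $G_{[C]}$ has at most one end, completing the proof.
\end{proof}
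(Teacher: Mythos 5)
Your overall architecture is the same as the paper's: finitely many orbits from \prettyref{lm:VDendl_orbits_k_cuts}, finite edge stabilizers from \prettyref{prop:lea}, and the reduction of the vertex-stabilizer condition to the implication ``if $G_{[C]}$ has more than one end then $B{[C]}$ has more than one end,'' which contradicts \prettyref{prop:enden_in_Bc}. The gap is that this implication --- the heart of the theorem --- is never actually proved. Your proof says one ``transfers'' a finite separating set through the cocompact action with finite stabilizers to obtain a finite $S\sse B{[C]}$ such that $B{[C]}-S$ has two infinite components; but that transfer \emph{is} the claim at issue, and it is exactly where the paper does all of its work: it fixes a finite set $U$ of orbit representatives with $G_{[C]}\cdot U = B{[C]}$ (\prettyref{lm:endl_orbits_Bc}), forms the finite set $Z=\set{g\in G_{[C]}}{\exists\, u,v\in U\,:\,(u,gv)\in E(\GG)}$ and the constant $m=\max\set{d(1,a)}{a\in Z}$, takes a finite-weight cut $D$ of the Cayley graph of $G_{[C]}$ with both sides infinite, and shows that $E=\set{gu}{g\in D,\, u\in U}$ has finite boundary inside $B{[C]}$, because any pair $g\in D$, $h\notin D$ with $g^{-1}h\in Z$ is joined by a path of length at most $m$ that must cross the finite set $\delta D$. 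Appealing to quasi-isometry invariance of ends (\v{S}varc--Milnor) would be a legitimate alternative in principle, but you explicitly decline to invoke that machinery and then supply no substitute; as written, your ``direct argument'' is a restatement of the conclusion.

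There are two further problems. First, finite generation of $G_{[C]}$ (needed even to speak of its number of ends) is justified by ``because $G$ is finitely generated and the block carries a cocompact action,'' which conflates two different arguments; note also that $G$ does not act on $B{[C]}$ at all --- only $G_{[C]}$ does, by \prettyref{lm:blocks}. The paper instead deduces finite generation from Bass--Serre theory via \prettyref{lm:fg}, applied to the already-established action of the finitely generated group $G$ on $T(\Copt)$ with finite edge stabilizers and finitely many orbits. (A direct argument from the cocompact action of $G_{[C]}$ on $B{[C]}$ is also possible, but you would have to give it.) Second, your appeal to part (2) of \prettyref{lm:VDconBc} as ``ensuring the end structure is not distorted'' is misplaced: that lemma compares paths in $\GG$ with paths inside the block and is used in the proof of \prettyref{prop:enden_in_Bc}; it says nothing about the comparison your proof actually needs, namely between the Cayley graph of $G_{[C]}$ and the block $B{[C]}$.
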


\begin{proof}
By \prettyref{lm:VDendl_orbits_k_cuts}  and \prettyref{prop:lea}, we know that $G$ acts with finitely many orbits and finite edge stabilizers on $T(\Copt)$.
Now consider a vertex stabilizer $G_{[C]}$ for some $C\in \Copt$. 
By \prettyref{lm:fg}, we have that  $G_{[C]}$ is finitely generated. Thus, its Cayley graph is locally finite and the number of ends is defined.

The block $B{[C]}$ has at most one end by \prettyref{prop:enden_in_Bc}. 
So it suffices to show that if the Cayley graph of $G_{[C]}$ has more than one end, then $B{[C]}$ has more than one end, too.

%In \prettyref{sec:blocks} we showed that $B{[C]}$ is connected and 
Because of \prettyref{lm:endl_orbits_Bc}, there is a finite set of representatives $U\sse B{[C]}$ such that $G_{[C]}~\cdot~U = B{[C]}$. 
More precisely, we can identify $B{[C]}$ with  $G_{[C]}\times U$. Let 
$Z = \set{g\in G_{[C]}}{ \exists\, u,v\in U\,:\, (u,gv) \in E(\Gamma)}$.
Then we have $\abs{Z}<\infty$ since $U$ is finite, $\Gamma$ is locally finite, and all vertex stabilizers are finite. Thus, we can define $m = \max\set{d(1,a)}{a\in Z}< \infty$ (here, $d$ denotes the distance in the Cayley graph of $G_{[C]}$).

Assume that $G_{[C]}$ has more than one end. Then the Cayley graph of $G_{[C]}$ has  a cut of finite weight $D\sse G_{[C]}$ with $\abs{D}=\abs{\Comp D} = \infty$. We claim that there are only finitely many pairs $g,h$ such that $g\in D$, $h \in G_{[C]}- D$ and $g^{-1}h \in Z$.

Indeed, since $g^{-1}h \in Z$, there is is a path of length at most $m$ from $g$ to $h$ in the Cayley graph of $G_{[C]}$. Since $g\in D$ and $h \in G_{[C]}- D$,
this path uses an edge of $\delta D$. Since $\delta D$ is finite and the Cayley graph is locally finite, there are only finitely many such paths of length at most $m$. Hence, there are only finitely many such $g$ and $h$.

Now consider $E =\set{gu}{g\in D, u\in U}\sse B{[C]}$. Every edge of the boundary $\delta E$ inside $B{[C]}$ has endpoints $gu$ and $hv$ with $g\in D$, $h\in G_{[C]} \sm {D}$ and $u,v \in U$. By the above claim there are only finitely many choices for $g$ and $h$. Together with the finiteness of $U$  this implies that $E$ has finite boundary inside $B{[C]}$.

Thus, $\beta E \sse B{[C]}$ is a finite set of vertices.
Since $\abs{D}=\abs{\Comp D} = \infty$ and $B{[C]} = G_{[C]}\times U$,  we see that $\abs{E}=\abs{B_{C} \sm E } = \infty$, too. Since $B{[C]}$ is connected,  $B{[C]} - \beta E$ has more than one infinite connected component. This in turn implies that $B{[C]}$ has more than one end. 
\end{proof}

\begin{corollary}[\cite{DicksD89},\cite{ThomassenW93}]\label{cor:new3}
A finitely generated group is accessible if and only if its Cayley graph is accessible.
\end{corollary}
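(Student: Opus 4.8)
The plan is to observe that the corollary is precisely the conjunction of two implications that have already been established, so the remaining work is merely to match up the two notions of accessibility and to check that the Cayley graph satisfies the standing hypotheses of the two results. Throughout $G$ is finitely generated, so I would fix a finite generating set and let $\GG$ be the associated Cayley graph. Then $\GG$ is connected and locally finite, the left-translation action of $G$ on $\GG$ is vertex-transitive (a single orbit on $V(\GG)=G$), it has only finitely many edge orbits (indexed by the generators), and every vertex stabilizer is trivial, hence finite.

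For the direction yielding accessibility of $\GG$ from accessibility of $G$, I would simply invoke \prettyref{prop:gunnar}. By \prettyref{dfn:gac}, accessibility of $G$ means that $G$ acts on some tree with finitely many orbits, finite edge stabilizers, and all vertex stabilizers having at most one end; this is verbatim the hypothesis of \prettyref{prop:gunnar}, whose conclusion is that $\GG$ is an accessible graph in the sense of \prettyref{dfn:accessible}.

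For the converse, deriving accessibility of $G$ from accessibility of $\GG$, I would apply \prettyref{thm:new2} to $\GG$ itself. Its hypotheses---$\GG$ locally finite, connected, accessible, with $G$ acting with finitely many orbits and finite node stabilizers---are all met by the remarks of the first paragraph. Its conclusion is that $G$ acts on the tree $T(\Copt)$ with finitely many orbits, finite edge stabilizers, and no vertex stabilizer having more than one end, which is exactly the assertion that $G$ is accessible in the sense of \prettyref{dfn:gac}.

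The only point requiring care---and really the sole content beyond citing the two earlier results---is that the two \emph{accessibility} notions (for groups via \prettyref{dfn:gac}, for graphs via \prettyref{dfn:accessible}) are linked precisely through the hypothesis/conclusion pairings of \prettyref{prop:gunnar} and \prettyref{thm:new2}, and that the Cayley graph of a finitely generated group legitimately plays the role of the graph $\GG$ in both statements. I do not expect a genuine additional obstacle here: the substance of each direction was already carried out in \prettyref{sec:blocks} and earlier in this section, so the proof reduces to assembling these two quotations.
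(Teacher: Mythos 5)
Your proposal is correct and follows exactly the paper's own proof: one direction is Theorem~\ref{thm:new2} applied to the Cayley graph (whose hypotheses hold since the left action is vertex-transitive with trivial stabilizers), and the converse is Proposition~\ref{prop:gunnar}. The paper's proof is just these two citations, so your more careful checking of the hypotheses is simply a slightly fuller write-up of the same argument.
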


\begin{proof}
If the  Cayley graph of $G$ is accessible, then \prettyref{thm:new2}  shows that the group $G$ is accessible. 
The converse is stated in \prettyref{prop:gunnar}. 
\end{proof}

\section {Conclusion}

The paper gives direct and simplified proofs for two fundamental results: 1. The Theorem of Muller and Schupp 
\cite{ms83} (context-free groups are exactly the f.g.~virtually free groups, \prettyref{thm:ms}) and 2. the accessibility result \prettyref{cor:new3} by Dicks and Dunwoody resp.~Thomassen and Woess.
This became possible due to the paper of Kr{\"o}n \cite{kroen10}. 
The intuition behind our construction is that having a Cayley graph of finite tree width should unravel a simplicial tree 
on which the group acts with finitely many orbits and finite node stabilizers. 
This intuition is worked out into a mathematical fact here. 
In particular, if we start with a context-free group $G$, the construction yields that $G$ is a 
fundamental group of a finite graph of finite groups by standard Bass-Serre theory.
%became as standard tool in modern combinatorial group theory. 
By 
\cite{Karrass73}, fundamental groups of finite graphs of finite groups are f.g.~virtually free. Together this yields the proof for the difficult direction in the 
characterization of context-free groups by Muller and Schupp.

%A future research program is to give effective bounds on the constructions. The problem is to find the minimal cuts. If this could be done in elementary time, it would lead to an elementary time algorithm for the isomorphism problem of context-free groups by first constructing the graph of groups and then using Krstic's algorithm (\cite{krstic89}) to check whether their fundamental groups are isomorphic.

A future research program is to investigate whether our constructions can be performed effectively. The problem is to find the minimal cuts, i.e., to decide whether a given cut is minimal with respect to some bi-infinite simple path. If this could be done in elementary time for graphs of finite tree width, it would lead to an elementary time algorithm for the isomorphism problem of context-free groups by first constructing the graphs of groups and then using Krstic's algorithm (\cite{krstic89}) to check whether the fundamental groups are isomorphic.

\bibliographystyle{abbrv}

\begin{thebibliography}{10}

\bibitem{anisimov71}
A.~V. Anisimov.
\newblock Group languages.
\newblock {\em Kibernetika}, 4/1971:18--24, 1971.
\newblock English translation in Cybernetics and Systems Analysis 4 (1973),
  594-601.

\bibitem{anisimov72}
A.~V. Anisimov.
\newblock Some algorithmic problems for groups and context-free languages.
\newblock {\em Kibernetika}, 2/1972:4--11, 1972.
\newblock English translation in Cybernetics and Systems Analysis 8 (1974),
  174-182.

\bibitem{Antolin11}
Y.~{Antolin}.
\newblock {On Cayley graphs of virtually free groups}.
\newblock {\em Groups -- Complexity -- Cryptology}, 3:301--327, 2011.

\bibitem{Bergman68}
G.~M. Bergman.
\newblock On groups acting on locally finite graphs.
\newblock {\em Annals of Mathematics}, 88:335--340, 1968.

\bibitem{BridsonG93}
M.~R. Bridson and R.~H. Gilman.
\newblock A remark about combings of groups.
\newblock {\em Internat. J. Algebra Comput.}, 3(4):575--581, 1993.

\bibitem{CoornaertFS12}
T.~Ceccherini-Silberstein, M.~Coornaert, F.~Fiorenzi, and P.~E. Schupp.
\newblock Groups, graphs, languages, automata, games and second-order monadic
  logic.
\newblock {\em European J. Combin.}, 33:1330–1368, 2012.

\bibitem{dicks80}
W.~{Dicks}.
\newblock {\em Groups, Trees and Projective Modules}.
\newblock Lecture Notes in Mathematics. Springer, 1980.

\bibitem{DicksD89}
W.~{Dicks} and M.~J. {Dunwoody}.
\newblock {\em {Groups acting on graphs}}.
\newblock Cambridge University Press, 1989.

\bibitem{dm89rairo}
V.~Diekert and A.~M{\"o}bus.
\newblock Hotz-isomorphism theorems in {F}ormal {L}anguage {T}heory.
\newblock {\em R.A.I.R.O. --- Informatique {T}h{\'e}orique et Applications},
  23:29--43, 1989.
\newblock Special issue STACS~88.

\bibitem{diestel}
R.~{Diestel}.
\newblock {\em Graph Theory}.
\newblock Graduate Texts in Mathematics. Springer, 2006.

\bibitem{Dunwoody79}
M.~J. Dunwoody.
\newblock {Accessibility and Groups of Cohomological Dimension One}.
\newblock {\em Proceedings of the London Mathematical Society},
  s3-38(2):193--215, 1979.

\bibitem{Dunwoody82}
M.~J. Dunwoody.
\newblock Cutting up graphs.
\newblock {\em Combinatorica}, 2:15--23, 1982.

\bibitem{Dunwoody85}
M.~J. Dunwoody.
\newblock The accessibility of finitely presented groups.
\newblock {\em Inventiones Mathematicae}, 81:449--457, 1985.

\bibitem{Dunwoody91}
M.~J. Dunwoody.
\newblock An inaccessible group.
\newblock In {\em Geometric group theory, {V}ol.\ 1 ({S}ussex, 1991)}, volume
  181 of {\em London Math. Soc. Lecture Note Ser.}, pages 75--78. Cambridge
  Univ. Press, Cambridge, 1993.

\bibitem{GilHHR07}
R.~H. Gilman, S.~Hermiller, D.~F. Holt, and S.~Rees.
\newblock A characterisation of virtually free groups.
\newblock {\em Arch. Math. (Basel)}, 89(4):289--295, 2007.

\bibitem{HoltRRT05}
D.~F. Holt, S.~Rees, R{\"o}ver, C.~E., and R.~M. Thomas.
\newblock Groups with context-free co-word problem.
\newblock {\em Journal of the London Mathematical Society}, 71:643--657, 2005.

\bibitem{HU}
J.~E. {Hopcroft} and J.~D. {Ulman}.
\newblock {\em Introduction to Automata Theory, Languages and Computation}.
\newblock Addison-Wesley, 1979.

\bibitem{Karrass73}
A.~Karrass, A.~Pietrowski, and D.~Solitar.
\newblock Finite and infinite cyclic extensions of free groups.
\newblock {\em Journal of the Australian Mathematical Society},
  16(04):458--466, 1973.

\bibitem{kroen10}
B.~Kr{\"o}n.
\newblock Cutting up graphs revisited -- a short proof of {S}tallings'
  structure theorem.
\newblock {\em Groups -- Complexity -- Cryptology}, 2:213--221, 2010.

\bibitem{krstic89}
S.~Krstic.
\newblock Actions of finite groups on graphs and related automorphisms of free
  groups.
\newblock {\em Journal of Algebra}, 124:119 -- 138, 1989.

\bibitem{KuskeL05}
D.~Kuske and M.~Lohrey.
\newblock Logical aspects of {C}ayley-graphs: the group case.
\newblock {\em Ann. Pure Appl. Logic}, 131(1-3):263--286, 2005.

\bibitem{LehnertS07}
J.~Lehnert and P.~Schweitzer.
\newblock The co-word problem for the {H}igman-{T}hompson group is
  context-free.
\newblock {\em Bull. London Math. Soc.}, 39:235--241, 2007.

\bibitem{ms83}
D.~E. Muller and P.~E. Schupp.
\newblock Groups, the theory of ends, and context-free languages.
\newblock {\em Journal of Computer and System Sciences}, 26:295--310, 1983.

\bibitem{Rimlinger87a}
F.~Rimlinger.
\newblock Pregroups and {B}ass-{S}erre theory.
\newblock {\em Mem. Amer. Math. Soc.}, 65(361):viii+73, 1987.

\bibitem{RobertsonS84}
N.~Robertson and P.~D. Seymour.
\newblock Graph minors. {III}. {P}lanar tree-width.
\newblock {\em Journal of Combinatorial Theory, Series B}, 36:49 -- 64, 1984.

\bibitem{sen96dimacs}
G.~S{\'e}nizergues.
\newblock On the finite subgroups of a context-free group.
\newblock In G.~Baumslag, D.~Epstein, R.~Gilman, H.~Short, and C.~Sims,
  editors, {\em Geometric and Computational Perspectives on Infinite Groups},
  number~25 in {DIMACS} series in Discrete Mathematics and Theoretical Computer
  Science, pages 201--212. Amer. Math. Soc., 1996.

\bibitem{serre80}
J.-P. {Serre}.
\newblock {\em Trees}.
\newblock Springer, 1980.
\newblock French original 1977.

\bibitem{Stallings71}
J.~Stallings.
\newblock {\em Group theory and three-dimensional manifolds}.
\newblock Yale University Press, New Haven, Conn., 1971.

\bibitem{Stallings68}
J.~R. Stallings.
\newblock On torsion-free groups with infinitely many ends.
\newblock {\em Annals of Mathematics}, 88:312--334, 1968.

\bibitem{ThomassenW93}
C.~Thomassen and W.~Woess.
\newblock Vertex-transitive graphs and accessibility.
\newblock {\em J. Comb. Theory Ser. B}, 58(2):248--268, 1993.

\bibitem{Woess89}
W.~Woess.
\newblock Graphs and groups with tree-like properties.
\newblock {\em J. Combin. Theory Ser. B}, 47(3):361--371, 1989.

\end{thebibliography}
\newcommand{\Ju}{Ju}\newcommand{\Ph}{Ph}\newcommand{\Th}{Th}\newcommand{\Ch}{Ch}\newcommand{\Yu}{Yu}\newcommand{\Zh}{Zh}

\end{document}